\newtheorem{theorem}{Theorem}[section]
\newtheorem{lemma}[theorem]{Lemma}
\theoremstyle{definition}
\newtheorem{definition}[theorem]{Definition}
\newtheorem{proposition}[theorem]{Proposition}
\newtheorem{corollary}[theorem]{Corollary}
\theoremstyle{remark}
\newtheorem{remark}[theorem]{Remark}
\numberwithin{equation}{section}
\def\cb{\mathcal{B}}
\def\ce{\mathcal{ E}}
\def\bn{{\mathbb N}}
\def\bq{{\mathbb Q}}
\def\br{{\mathbb R}}
\def\bz{{\mathbb Z}}
\def\m{\mu}
\begin{document}

\title[Periodic $p$-adic Gibbs measures of $q$-states Potts model on Cayley tree]{Periodic $p$-adic Gibbs measures of $q$-states Potts model on Cayley tree: The chaos implies the vastness of $p$-adic Gibbs measures}

\author{Mohd Ali Khameini Ahmad}
\address{Department of Computational \& Theoretical Sciences, International Islamic University Malaysia, 25200 Kuantan, Pahang, Malaysia}
\email{khameini.ahmad@gmail.com}
\curraddr{LAMA UMR 8050 CNRS, Universit\'e Paris--Est Cr\'eteil, France, 61 Avenue du General de Gaulle, 94010 Creteil Cedex, France}


\author{Lingmin Liao}
\address{LAMA UMR 8050 CNRS, Universit\'e Paris--Est Cr\'eteil, France, 61 Avenue du General de Gaulle, 94010 Creteil Cedex, France}
\email{lingmin.liao@u-pec.fr}

\author{Mansoor Saburov}
\address{Department of Computational \& Theoretical Sciences, International Islamic University Malaysia, 25200 Kuantan, Pahang, Malaysia}
\email{msaburov@gmail.com}

\thanks{The first author (M.A.K.A) is grateful to Embassy of France in Malaysia and Labex B\'ezout for the financial support to pursue his Ph.D at LAMA, Universit\'e Paris--Est Cr\'eteil, France. The third author (M.S.) thanks the Junior Associate Scheme, Abdus Salam International Centre for Theoretical Physics (ICTP), Trieste, Italy, for the invitation and hospitality. He was partially supported by the MOHE grant FRGS14-141-0382.}

\subjclass[2010]{Primary 46S10, 82B26; Secondary 60K35}

\date{}


\keywords{$p$-adic number, $p$-adic Potts model, $p$-adic Gibbs measure}

\begin{abstract}
We study the set of $p$-adic Gibbs measures of the $q$-states Potts model on the Cayley tree of order three. We prove the vastness of the periodic $p$-adic Gibbs measures for such model by showing the chaotic behavior of the correspondence Potts--Bethe mapping over $\mathbb{Q}_p$ for $p\equiv 1 \ (\rm{mod} \ 3)$. In fact, for $0 < |\theta-1|_p < |q|_p^2 < 1$, there exists a subsystem that isometrically conjugate to the full shift on three symbols. Meanwhile, for $0 < |q|_p^2 \leq |\theta-1|_p < |q|_p < 1$, there exists a subsystem that isometrically conjugate to a subshift of finite type on $r$ symbols where $r \geq 4$. However, these subshifts on $r$ symbols are all topologically conjugate to the full shift on three symbols. The $p$-adic Gibbs measures of the same model for the cases $p=2,3$ and the corresponding Potts--Bethe mapping are also discussed.

Furthermore, for $0 < |\theta-1|_p < |q|_p < 1,$ we remark that the Potts--Bethe mapping is not chaotic when $p=2,\ p=3$ and $p\equiv 2 \ (\rm{mod} \ 3)$ and we could not conclude the vastness of the periodic $p$-adic Gibbs measures. In a forthcoming paper with the same title, we will treat the case $0 < |q|_p \leq |\theta-1|_p < 1$ for all $p$.
\end{abstract}

\maketitle

\section{Introduction}

A $p$-adic valued theory of probability (a non--Kolmogorov model in which probabilities take values in the field $\mathbb{Q}_p$ of $p$-adic numbers) was proposed in a series of papers \cite{Kh1990a,Kh1990b,Kh1991,Kh1992a,Kh1992b,Kh1993,Kh1996a,Kh1996b,Kh1999} in order to resolve the problem of the statistical interpretation of $p$-adic valued wave functions in non-Archimedean
quantum physics \cite{BelGas,VVZ,Vol}.  On the other hand, in order to formalize the measure-theoretic approach for $p$-adic probability theory, in the papers \cite{Ilic2015,Ilic2014,Ilic2012,Ilic2016} the authors developed several $p$-adic probability logics which are sound, complete and decidable extensions of the classical propositional logic. This general theory of $p$-adic probability was applicable to the problem of the probability interpretation of quantum theories with non-Archimedean valued wave functions \cite{AKhSh,Kh1994,Kh2009}. The applications of $p$-adic functional and harmonic analysis have also shown up in theoretical physics and quantum mechanics \cite{ACKh,AKhC1997a,AKhC1997b,AKhC1997c,AKhSh,DKhKV}.

Gibbs measure which plays the central role in statistical mechanics is a branch of probability theory that takes its origin from Boltzmann and Gibbs who introduced a statistical approach to thermodynamics to deduce collective macroscopic behaviors from individual microscopic information. Gibbs measure associated
with the Hamiltonian of a physical system (a model) generalizes the notion of a canonical ensemble (see \cite{RUBook}). In the classical case (where the mathematical model was prescribed over the real numbers), the physical phenomenon of phase transition should be reflected in a mathematical model by the non-uniqueness of the Gibbs measures or the size of the set of Gibbs measures for a prescribed model. Due to the convex structure of the set of Gibbs measures over the real numbers field, in order to describe the size of the set of Gibbs measures, it was sufficient to study the number of its extreme elements. Hence, in the classical case, to predict a phase transition, the main attention was paid to finding all possible extreme Gibbs measures. We refer to the book \cite{HOG} for more details.

The rigorous mathematical foundation of the theory of Gibbs measures on Cayley trees was presented in the books \cite{RUBook,RUSurvey}. The $p$-adic counterpart of the theory of Gibbs measures on Cayley trees has also been initiated \cite{GMR,MR1,MR2}. The existence of $p$-adic Gibbs measures as well as the phase transition for some lattice models were established in \cite{M2,MH2013,FMMSOK2015,RUKO2013}. Recently, in \cite{RUKO2015,SMAA2015c,SMAA2015d}, all translation-invariant $p$-adic Gibbs measures of the $p$-adic Potts model on the Cayley tree of order two and three were described by studying allocation of roots of quadratic and cubic equations over some domains of the $\mathbb{Q}_p$.

In the $p$-adic case, due to lack of convex structure of the set of $p$-adic (quasi) Gibbs measures, it is quite difficult to constitute a phase transition with some features of the set of $p$-adic (quasi) Gibbs measures. Moreover, unlike the real case \cite{CKURRK}, the set of $p$-adic Gibbs measures of lattice models on the Cayley tree has a complex structure in a sense that it is strongly tied up with a Diophantine problem (i.e. to find all solutions of a system of polynomial equations or to give a bound for the number of solutions) over the $\mathbb{Q}_p$. In general, the same Diophantine problem may have different solutions from the field of  $p$-adic numbers to the field of real numbers because of the different topological structures.  On the other hand, the rise of the order of the Cayley tree makes difficult to study the corresponding Diophantine problem over the $\mathbb{Q}_p$. In this aspect, the question arises as to whether a root of a polynomial equation belongs to the domains $\mathbb{Z}_p^{*}, \ \mathbb{Z}_p\setminus\mathbb{Z}_p^{*}, \ \mathbb{Z}_p, \ \mathbb{Q}_p\setminus\mathbb{Z}_p^{*}, \ \mathbb{Q}_p\setminus\left(\mathbb{Z}_p\setminus\mathbb{Z}_p^{*}\right), \ \mathbb{Q}_p\setminus\mathbb{Z}_p, \ \mathbb{Q}_p$ or not.  Recently, this problem was fully studied for monomial equations \cite{FMMS}, quadratic equations \cite{SMAA2015c,SMAA2016a}, depressed cubic equations for primes $p>3$ in \cite{FMBOMS,FMBOMSKM,SMAA2016b}. Meanwhile, in \cite{SMAA2013,SMAA2015a,SMAA2015b}, the depressed cubic equations for primes $p=2,3$ are discussed.

This paper can be considered as a continuation of the papers \cite{RUKO2015,SMAA2015c,SMAA2015d}. We are going to study the set of $p$-adic Gibbs measures of the $q$-states Potts model on the Cayley tree of order three. For such study, the conditions $|\theta-1|_p<1$ and $|q|_p\leq 1$ are natural (see Subsections \ref{Sec:2-1} and \ref{Sec:2-5}). We remark that the case $0<|\theta-1|_p<|q|_p=1$ has recently been considered in \cite{FMOK2017} and the regularity of the the dynamics of the Potts--Bethe mapping for primes $p \geq 5,\ p\equiv 2 \ (\rm{mod} \ 3)$ with the condition $0<|\theta-1|_p<|q|_p<1$ has been studied in \cite{SMAA2016Submitted}.  In this paper, we prove the vastness of (periodic) $p$-adic Gibbs measures by showing the chaotic behavior of the Potts--Bethe mapping over $\mathbb{Q}_p$ for primes $p\equiv 1 \ (\rm{mod} \ 3)$ with the condition $0<|\theta-1|_p<|q|_p<1$. To complete our work, in a forthcoming paper with the same title, we will study the Potts--Bethe mapping over $\mathbb{Q}_p$ for the case $0<|q|_p\leq |\theta-1|_p<1$.

We organize our paper as follows. In Section 2, we provide the preliminaries of the paper. In Section 3, we discuss the $p$-adic Gibbs measure of the Potts model associated with an $m$-boundary function. Such a measure is associated to the cycle of length $m$ of the corresponding Potts--Bethe mapping. In Section 4, the dynamical behavior of the Potts--Bethe mapping is studied for $p \equiv 1 \ (\rm{mod} \ 3)$. We find a subsystem that isometrically conjugate to a shift dynamics i.e., the full shift on three symbols or a subshift of finite type on $r\geq 4$ symbols. All these subsystems are proven to be topologically conjugate to the full shift over three symbols. Finally, in Section 5, we discuss the dynamics of Potts--Bethe mapping over $\mathbb{Q}_2$ and $\mathbb{Q}_3$. We prove that except for a fixed point and the inverse images of the singular point, all points converge to an attracting fixed point.
 
\section{Preliminaries}
\subsection{\texorpdfstring{$p$-}{Lg}adic numbers}\label{Sec:2-1}

For a fixed prime $p$, the field $\mathbb{Q}_p$ of $p$-adic
numbers is a completion of the set $\bq$ of rational numbers with
respect to the non-Archimedean norm $|\cdot|_p:\bq\to\br$ given by
\begin{eqnarray*}
|x|_p=\left\{
\begin{array}{c}
p^{-k}, \ x\neq 0,\\
0,\ \quad x=0,
\end{array}
\right.
\end{eqnarray*}
where $x=p^k\frac{m}{n}$ with $k,m\in\bz,$ $n\in\bn$,
$p\centernot\mid m$ and $p\centernot\mid n$. The number $k$ is called \textit{the $p$-order} of $x$
and is denoted by $ord_p(x).$ Note that $ord_p(x)=-\log_p|x|_p=k.$ Moreover, this norm is non-Archimedean because it satisfies the strong triangle inequality: $|x+y|_p\leq \max\left(|x|_p,|y|_p\right).$ The metric on $\mathbb{Q}_p$ induced by this norm, $d(x,y)=|x-y|_p,$ satisfies the ultrametric property: for all $x,y,z \in \mathbb{Q}_p,\ d(x,y) \leq \max\left(d(x,z), d(z,y)\right).$

We respectively denote the set of all {\it $p$-adic integers} and {\it $p$-adic units} of
$\bq_p$ by
$\bz_p=\{x\in\bq_{p}: |x|_p\leq1\}$ and  $\bz_p^{*}=\{x\in\bq_{p}: |x|_p=1\}.$ Any $p$-adic number $x\in\bq_p$ can be uniquely written in the
canonical form
$
x=p^{ord_p(x)}\left(x_0+x_1\cdot p +x_2\cdot p^2+\dots \right)
$
where $x_0\in \{1,2,\dots, p-1\}$ and $x_i \in \{ 0,1,2,\dots, p-1 \}$ for
$i\in \mathbb{N}:=\{1,2,\dots\}$. Therefore, $x =\frac{x^{*}}{|x|_p}$, for some $x^{*}\in\bz_p^{*}$.

We denote by $\mathbb{B}_r(a)=\{x\in \bq_p : |x-a|_p< r\}$ and $\mathbb{S}_{r}(a)=\{x\in \bq_p : |x-a|_p=r\}$ the open ball and the sphere with center $a\in \bq_p$ and radius $r>0$ respectively. Note that, by non-Archimedean property, an open ball is also closed. The $p$-adic logarithm function $\ln_p(\cdot) :\mathbb{B}_1(1)\to \mathbb{B}_1(0)$ is defined by
$$
\ln_p(x)=\ln_p(1+(x-1))=\sum_{n=1}^{\infty}(-1)^{n+1}\frac{(x-1)^n}{n}.
$$ The $p$-adic exponential function $\exp_p(\cdot): \mathbb{B}_{p^{-1/(p-1)}}(0)\to \mathbb{B}_1(1)$ is defined by
$$
\exp_p(x)=\sum_{n=0}^{\infty}\frac{x^n}{n!}.
$$

\begin{lemma}[see \cite{NK,VVZ}]\label{21}  Let $x\in
	\mathbb{B}_{p^{-1/(p-1)}}(0).$ Then we have $$ |\exp_p(x)|_p=1,\ \ 
	|\exp_p(x)-1|_p=|x|_p<1, \ \ |\ln_p(1+x)|_p=|x|_p<p^{-1/(p-1)},$$
	$$ \ln_p(\exp_p(x))=x, \ \ \exp_p(\ln_p(1+x))=1+x. $$
\end{lemma}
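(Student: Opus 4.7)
The plan is to reduce every claim to a careful $p$-adic estimate of the general term in the defining power series and then invoke the strong triangle inequality. The central arithmetic input is Legendre's formula
\[
\operatorname{ord}_p(n!)=\frac{n-s_p(n)}{p-1}\le \frac{n-1}{p-1},
\]
where $s_p(n)$ is the digit sum of $n$ in base $p$, together with the elementary bound $\operatorname{ord}_p(n)\le \frac{n-1}{p-1}$ valid for all $n\ge 1$ (checked by writing $n=p^k m$ with $\gcd(m,p)=1$ and noting $p^k-1\ge k(p-1)$).

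For the exponential claims I would start by estimating, for $n\ge 2$ and $|x|_p<p^{-1/(p-1)}$,
\[
\left|\frac{x^{n}}{n!}\right|_p \;\le\; |x|_p^{\,n}\,p^{(n-1)/(p-1)} \;=\; |x|_p\cdot\bigl(|x|_p\, p^{1/(p-1)}\bigr)^{\,n-1}\;<\;|x|_p.
\]
By the strong triangle inequality applied to $\exp_p(x)-1 = x + \sum_{n\ge 2}\frac{x^n}{n!}$, the leading term dominates strictly, yielding $|\exp_p(x)-1|_p=|x|_p<1$ and consequently $|\exp_p(x)|_p=|1+(\exp_p(x)-1)|_p=1$. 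An entirely analogous estimate for $\ln_p(1+x)=x+\sum_{n\ge 2}(-1)^{n+1}\frac{x^{n}}{n}$, using $|x^n/n|_p=|x|_p^{\,n}p^{\operatorname{ord}_p(n)}$ and the bound $\operatorname{ord}_p(n)\le (n-1)/(p-1)$, shows that each term for $n\ge 2$ has $p$-adic absolute value strictly less than $|x|_p$, hence $|\ln_p(1+x)|_p=|x|_p$; the inequality $|x|_p<p^{-1/(p-1)}$ is just the hypothesis.

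For the two composition identities I would argue as follows. Both $\exp_p(\ln_p(1+T))=1+T$ and $\ln_p(\exp_p(T))=T$ are identities of formal power series in $\mathbb{Q}_p[[T]]$, inherited from their well-known counterparts over $\mathbb{Q}$. Once we know convergence of the inner and outer series on the prescribed domains, the identities transfer to genuine equalities of $p$-adic numbers. The first two parts of the lemma provide exactly the convergence we need: if $x\in \mathbb{B}_{p^{-1/(p-1)}}(0)$ then $\ln_p(1+x)$ lies in the same ball (since $|\ln_p(1+x)|_p=|x|_p<p^{-1/(p-1)}$), and $\exp_p(x)-1$ lies in $\mathbb{B}_1(0)$ with $|\exp_p(x)-1|_p=|x|_p<p^{-1/(p-1)}$, which is where $\ln_p(1+\cdot)$ is defined and, in fact, well inside its radius of convergence.

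The main (and really only) obstacle is checking the sharp inequality $\operatorname{ord}_p(n)\le (n-1)/(p-1)$, since the value $p^{-1/(p-1)}$ is the exact radius beyond which convergence fails: at $|x|_p=p^{-1/(p-1)}$ the term indexed by $n=p$ in $\exp_p$ has absolute value equal to $|x|_p$, which is why the hypothesis is a strict open-ball condition. All the other estimates are routine once this combinatorial bound is in hand.
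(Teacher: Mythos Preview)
The paper does not supply its own proof of this lemma: it is stated with the citation ``see \cite{NK,VVZ}'' and no proof environment follows. Your argument is the standard textbook one found in those references---estimate the general term of each series via Legendre's formula and the bound $\operatorname{ord}_p(n)\le (n-1)/(p-1)$, then apply the strong triangle inequality so the linear term dominates---and it is correct as written. The only place worth a word of caution is the passage from the formal power-series identities to numerical equalities: in the $p$-adic setting this is legitimate because convergence is equivalent to the terms tending to zero and any convergent series is unconditionally convergent, so the double series obtained by substituting one convergent series into another can be rearranged freely; you implicitly use this, and it would be worth saying so explicitly.
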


Let $\ce_p=\{ x\in\bq_p:   |x-1|_p<p^{-1/(p-1)}\}.$ Obviously, if $p\geq3$ then $\ce_p=\{ x \in \mathbb{Z}_p^{*} : |x-1|_p<1\}=\mathbb{B}_1(1).$ Due to Lemma \ref{21}, we have the following result.

\begin{lemma}[see \cite{NK,VVZ}]\label{epproperty}
	The set $\ce_p$ has the following properties:
	\begin{itemize}
		\item[(i)] $\ce_p$ is a group under multiplication;
		\item[(ii)] one has $|a-b|_p<1$ for all $a,b\in\ce_p$;
		\item[(iii)] if $a,b\in\ce_p$, then one has
		$
		|a+b|_p=\left\{\begin{array}{ll}
		\frac{1}{2}, & \mbox{if }\ p=2\\
		1, & \mbox{if }\ p\neq2;
		\end{array}\right.
		$
		\item[(iv)]  if $a\in\ce_p$, then
		there exists  $h\in \mathbb{B}_{p^{-1/(p-1)}}(0)$ such that
		$a=\exp_p(h)$.
	\end{itemize}
\end{lemma}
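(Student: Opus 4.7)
The plan is to derive all four parts directly from the strong triangle inequality together with the $\exp_p$/$\ln_p$ identities collected in Lemma~\ref{21}. The key observation I would exploit throughout is that every $a\in\ce_p$ automatically satisfies $|a|_p=1$, because $|a-1|_p<p^{-1/(p-1)}\leq 1$ forces the leading contribution to come from $1$ in the ultrametric sense.

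For (i), I would establish closure via the decomposition $ab-1=a(b-1)+(a-1)$, which combined with $|a|_p=1$ and the strong triangle inequality gives $|ab-1|_p\leq\max(|a-1|_p,|b-1|_p)<p^{-1/(p-1)}$. For inverses, the identity $a^{-1}-1=-(a-1)/a$ together with $|a|_p=1$ yields $|a^{-1}-1|_p=|a-1|_p$, so $a^{-1}\in\ce_p$; associativity and the presence of the identity $1$ are inherited from $\bq_p^{\times}$. Part (ii) is then immediate by writing $a-b=(a-1)-(b-1)$ and using $p^{-1/(p-1)}\leq 1$.

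For (iii), I would decompose $a+b=2+(a-1)+(b-1)$ and compare $|2|_p$ with the bound $p^{-1/(p-1)}$. For odd $p$ one has $|2|_p=1$ while $|(a-1)+(b-1)|_p<p^{-1/(p-1)}<1$, so the $2$ dominates in the ultrametric sense and $|a+b|_p=|2|_p=1$. The case $p=2$ is the delicate one and the only real obstacle: there $|2|_2=\tfrac12$ exactly equals the bound $p^{-1/(p-1)}$, so one must check that the \emph{strict} inequality $|(a-1)+(b-1)|_2<\tfrac12$ is enough to keep the term $2$ dominant; once this is confirmed one concludes $|a+b|_2=|2|_2=\tfrac12$.

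Finally, for (iv), given $a\in\ce_p$ I would set $h:=\ln_p(a)$, which is well-defined since $\ce_p\subseteq\mathbb{B}_1(1)$. Lemma~\ref{21} gives $|h|_p=|a-1|_p<p^{-1/(p-1)}$, so $h\in\mathbb{B}_{p^{-1/(p-1)}}(0)$, and the inversion identity $\exp_p(\ln_p(1+x))=1+x$ applied with $x=a-1$ yields $\exp_p(h)=a$. No further argument is needed beyond citing the established properties of $\ln_p$ and $\exp_p$.
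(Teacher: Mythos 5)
Your proof is correct, and it follows exactly the route the paper intends: the paper gives no proof of this lemma, merely citing \cite{NK,VVZ} and remarking that it follows from Lemma~\ref{21}, which is precisely what your argument carries out (ultrametric estimates from the decompositions $ab-1=a(b-1)+(a-1)$, $a-b=(a-1)-(b-1)$, $a+b=2+(a-1)+(b-1)$, plus the $\exp_p/\ln_p$ inversion for (iv)). The only point worth making explicit in your part (iii) for $p=2$ is that the strict inequality $|(a-1)+(b-1)|_2<\tfrac12$ in a discretely valued field forces $|(a-1)+(b-1)|_2\le\tfrac14<|2|_2$, so the ultrametric equality $|a+b|_2=|2|_2=\tfrac12$ indeed holds.
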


Note that a compact-open set in $\mathbb{Q}_p$ is a finite union of balls.
The following is the definition of local scaling on a compact-open set $X \subset \mathbb{Q}_p$.
\begin{definition}[Definition 4.1 of \cite{Kinsbery2009}] \label{def:locally-scaling}
Let $X = \bigcup\limits_{i=1}^{n}\mathbb{B}_{r_i}(a_i) \subset \mathbb{Q}_p$ be a compact-open set. We say that a mapping $f: X\to X$ is locally scaling for $r_i \in \{p^n, n\in \mathbb{Z}\}$ if there exists a function $S:X\to \mathbb{R}_{\geq 1}$ such that for any $x,y \in \mathbb{B}_{r_i}(a_i)$, $S(x)=S(y)=S(a_i)$ and $$|f(x)-f(y)|_p = S(a_i)|x-y|_p.$$
\end{definition}
The function $S$ is called a scaling function. The following lemma will be useful.
\begin{lemma}[Theorem 5.1 of \cite{Kinsbery2009}] \label{scaling}
Let $X= \bigcup\limits_{i=1}^{n}\mathbb{B}_{r_i}(a_i) \subset \mathbb{Q}_p$ be a compact-open and $f: X\to X$ be a scaling for $r_i \in \{p^n, n\in \mathbb{Z}\}.$ Let $S:X\to \mathbb{R}_{\geq 1}$ be the correspondence scaling function. Then 
for all $r^{\prime} \leq r_i$, the restricted map 
$$f : \mathbb{B}_{r^\prime}(a_i) \to \mathbb{B}_{r^{\prime}S(a_i)}(f(a_i))$$ is a bijection.
\end{lemma}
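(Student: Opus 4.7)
The plan is to verify three properties separately—injectivity, image containment, and surjectivity—with the first two being immediate from the scaling identity in Definition~\ref{def:locally-scaling}, and essentially all the work going into surjectivity, which I would prove via a pigeonhole argument on sub-balls followed by a nested-balls intersection.

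The easy parts I would dispatch in two lines each. For injectivity, $f(x)=f(y)$ yields $S(a_i)|x-y|_p=0$, and since $S(a_i)\geq 1$ this forces $x=y$. For image containment, one writes $|f(x)-f(a_i)|_p=S(a_i)|x-a_i|_p<r'S(a_i)$, placing $f(x)$ inside $\mathbb{B}_{r'S(a_i)}(f(a_i))$.

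For surjectivity I would exploit the tree structure of $\mathbb{Q}_p$. Write $r'=p^m$; since for any two distinct points in the ball the ratio $|f(x)-f(y)|_p/|x-y|_p$ lies in $p^{\mathbb{Z}}$, we must have $S(a_i)=p^s$ for some integer $s\geq 0$. Then for every $k\geq 0$, both the source ball $\mathbb{B}_{r'}(a_i)$ and the target ball $\mathbb{B}_{r'S(a_i)}(f(a_i))$ partition into exactly $p^k$ sub-balls, of radii $p^{m-k}$ and $p^{m+s-k}$ respectively. The scaling identity forces $f$ to send each source sub-ball of radius $p^{m-k}$ into a single target sub-ball of radius $p^{m+s-k}$; moreover, two distinct source sub-balls land in distinct target sub-balls, because the centers $c_1,c_2$ of two disjoint sub-balls of radius $p^{m-k}$ satisfy $|c_1-c_2|_p\geq p^{m-k}$, so by the strict ultrametric inequality $|x-y|_p=|c_1-c_2|_p\geq p^{m-k}$ for any $x,y$ in the respective sub-balls, giving $|f(x)-f(y)|_p\geq p^{m+s-k}$. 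With $p^k$ sub-balls on each side, the induced sub-ball map is a bijection.

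To extract the preimage of a given $y\in\mathbb{B}_{r'S(a_i)}(f(a_i))$, for every $k$ I let $T_k$ be the unique target sub-ball of radius $p^{m+s-k}$ containing $y$ and $D_k$ the unique source sub-ball of radius $p^{m-k}$ whose image lies in $T_k$. The inclusions $T_{k+1}\subset T_k$ force $D_{k+1}\subset D_k$, so by completeness of $\mathbb{Q}_p$ the intersection $\bigcap_k D_k$ collapses to a single point $x\in\mathbb{B}_{r'}(a_i)$; since $f(x)$ and $y$ both belong to every $T_k$ and the radii shrink to zero, $f(x)=y$. The main obstacle throughout is the pigeonhole matching of sub-ball counts, which relies crucially on both $r'$ and $S(a_i)$ being integer powers of $p$—exactly the role played by the radius restriction $r_i\in\{p^n:n\in\mathbb{Z}\}$ in Definition~\ref{def:locally-scaling}.
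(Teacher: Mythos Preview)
Your argument is correct. Injectivity and image containment are immediate as you say, and the surjectivity argument via matching sub-ball counts at every scale followed by a nested-balls intersection is sound; the one step you leave somewhat implicit---that $D_{k+1}\subset D_k$---does follow, since $D_{k+1}$ sits in some level-$k$ source sub-ball $D'$ whose image must meet $T_k$ and hence (by the ultrametric dichotomy for balls of equal radius) equals $T_k$, forcing $D'=D_k$ by uniqueness.

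As for comparison: the paper does not actually prove this lemma. It is stated as Theorem~5.1 of \cite{Kinsbery2009} and simply cited, with no argument given. So there is nothing in the paper to compare your approach against; you have supplied a self-contained proof where the authors chose to quote an external reference.
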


The following result is a direct consequence of Lemma \ref{scaling}.

\begin{corollary} \label{scaling2}
Keep the same assumption as in Lemma \ref{scaling}, we have for all $r^{\prime} \leq r_i$, the restricted map 
$$f : \mathbb{S}_{r^\prime}(a_i) \to \mathbb{S}_{r^{\prime}S(a_i)}(f(a_i))$$ is a bijection.
\end{corollary}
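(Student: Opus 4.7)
The plan is to realize the sphere as a set-theoretic difference of two open balls in the ultrametric $\mathbb{Q}_p$, and then apply Lemma \ref{scaling} twice.

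First, I would exploit the fact that $|\cdot|_p$ only takes values in $\{0\}\cup\{p^n:n\in\mathbb{Z}\}$. For any radius $r'$ of the form $p^n$, the strict inequality $|x-a_i|_p<pr'$ is equivalent to $|x-a_i|_p\le r'$, so
$$\mathbb{B}_{pr'}(a_i)\;=\;\mathbb{B}_{r'}(a_i)\;\sqcup\;\mathbb{S}_{r'}(a_i).$$
The scaling function necessarily sends $r'\in\{p^n\}$ to a radius $r'S(a_i)\in\{p^n\}$ (this is forced by the defining identity $|f(x)-f(y)|_p=S(a_i)|x-y|_p$ combined with the fact that $p$-adic distances lie in $p^{\mathbb{Z}}$), so the same identity applies on the image side:
$$\mathbb{B}_{pr'S(a_i)}(f(a_i))\;=\;\mathbb{B}_{r'S(a_i)}(f(a_i))\;\sqcup\;\mathbb{S}_{r'S(a_i)}(f(a_i)).$$

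Next, I would apply Lemma \ref{scaling} to the two radii $pr'$ and $r'$, both of which are $\le r_i$ in the intended range (the only boundary case to worry about is $r'=r_i$, in which $\mathbb{S}_{r'}(a_i)$ lies outside $\mathbb{B}_{r_i}(a_i)$ where the scaling hypothesis is stated; I would read the statement as excluding this case, i.e., treat $r'$ as strictly less than $r_i$ in the $p$-adic radius scale, so that $pr'\le r_i$). The lemma then gives two bijections
$$f:\mathbb{B}_{pr'}(a_i)\xrightarrow{\;\sim\;}\mathbb{B}_{pr'S(a_i)}(f(a_i)),\qquad f:\mathbb{B}_{r'}(a_i)\xrightarrow{\;\sim\;}\mathbb{B}_{r'S(a_i)}(f(a_i)).$$
Since $\mathbb{B}_{r'}(a_i)\subset\mathbb{B}_{pr'}(a_i)$ and the two image balls are nested likewise, taking set differences of these bijections produces the required bijection
$$f:\mathbb{S}_{r'}(a_i)\xrightarrow{\;\sim\;}\mathbb{S}_{r'S(a_i)}(f(a_i)).$$

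There is essentially no obstacle to overcome: the corollary is a direct, almost formal consequence of the ball version, the only ingredient beyond Lemma \ref{scaling} being the elementary ultrametric identity that equates a closed ball of radius $r'=p^n$ with the open ball of radius $pr'$. The entire argument is a two-line set-theoretic manipulation once this identity is noted.
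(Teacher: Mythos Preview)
Your proposal is correct and follows essentially the same approach as the paper: write $\mathbb{S}_{r'}(a_i)=\mathbb{B}_{pr'}(a_i)\setminus\mathbb{B}_{r'}(a_i)$ as a difference of consecutive open balls, apply Lemma~\ref{scaling} to each ball, and take the set difference. The paper's proof is terser and does not address the boundary case $r'=r_i$ that you flag; your reading (that one should take $pr'\le r_i$) is the natural one.
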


\begin{proof}
Recall $\mathbb{S}_{r^{\prime}}(a_i)=\{x\in \bq_p : |x-a_i|_p=r^{\prime}\}.$ Then $\mathbb{S}_{r^{\prime}}(a_i)=\mathbb{B}_{pr^\prime}(a_i) \setminus \mathbb{B}_{r^\prime}(a_i)$ is a difference of consecutive balls. By Lemma \ref{scaling}, $f$ are bijections from balls to balls. Thus when restricted to the spheres $f$ is also bijective.
\end{proof}

\subsection{\texorpdfstring{$p$-}{Lg}adic subshift}

In this paper, we will apply a theorem in {\cite{Liao1}. We take the same notation as in {\cite{Liao1}.
Let $f : X \to \mathbb{Q}_p$ be a mapping from a compact open set $X \subset \mathbb{Q}_p$ into $\mathbb{Q}_p$. We assume that (i) $f^{-1}\left(X\right) \subset X$ and (ii) $X = \bigcup_{j \in I}\mathbb{B}_{p^{-\tau}}(a_j)$ can be written as a finite disjoint union of balls of centers $a_j$ and of the same radius $p^{-\tau},\ \tau \in \mathbb{Z}$ such that for each $j \in I$ there is an integer $\tau_j \in \mathbb{Z}$ such that for any $x,y \in \mathbb{B}_{r}(a_j)$
\begin{equation}\label{2.1}
\left|f(x)-f(y)\right|_p=p^{\tau_j}|x-y|_p.
\end{equation}
For such a map $f$, we define its Julia set by
\begin{equation}\label{2.2}
J_{f} = \bigcap_{n=0}^{\infty}f^{-n}\left(X\right).
\end{equation}
It is clear that $f^{-1}\left(J_{f}\right) = J_{f}$ and $f\left(J_{f}\right) \subset J_{f}$.

The triple $(X, J_{f}, f)$ is called \textit{a $p$-adic weak repeller} if all $\tau_j$ in \eqref{2.1} are nonnegative, but at least one is positive. It is called \textit{a $p$-adic repeller} if all $\tau_j$ in \eqref{2.1} are positive. For any $i \in I,$ we let $$I_i := \left\{j \in I : \mathbb{B}_{r}(a_j) \cap f\left(\mathbb{B}_{r}(a_i)\right) \neq \emptyset \right\} = \left\{j \in I : \mathbb{B}_{r}(a_j) \subset f\left(\mathbb{B}_{r}(a_i)\right) \right\}$$ (the second equality holds because of the expansiveness and the ultrametric property). We then define a so-called \textit{incidence matrix} $A = (a_{ij})_{I \times I}$ as follows
$$
a_{ij}=
\begin{cases}
1 & \text{if} \ \ j \in I_i, \\
0 & \text{if} \ \ j \not\in I_i.
\end{cases}
$$
If $A$ is irreducible, we say that $(X,J_f,f)$ is \textit{transitive}. Recall that $A$ is irreducible if for any pair $(i,j) \in I \times I$ there is a positive integer $m$ such that $a_{ij}^{(m)} > 0$, where $a_{ij}^{(m)}$ is the entry of the matrix $A^{m}$. For an index set $I$ and an irreducible incidence matrix $A$ given above, we denote by $$\Sigma_A = \left\{(x_k)_{k \geq 0} : x_k \in I,\ A_{x_k,x_{k+1}}=1,\ k \geq 0 \right\}
$$ the subshift space. We equip $\Sigma_A$ with a metric $d_f$ depending on the dynamics which is defined as follows. For $i,j \in I,\ i \neq j$ let $\kappa(i,j)$ be the integer such that $|a_i-a_j|_p=p^{-\kappa(i,j)}$. It is clear that $\kappa(i,j) < \tau$. By the ultrametric inequality, we have $$|x-y|_p = |a_i-a_j|_p,\ i \neq j,\ \forall x \in \mathbb{B}_r(a_i),\ \forall y \in \mathbb{B}_r(a_j).$$
For $x=(x_0,x_1,\cdots,x_n,\cdots) \in \Sigma_A$ and $y=(y_0,y_1,\cdots,y_n,\cdots) \in \Sigma_A$, we define
$$
d_f(x,y)=
\begin{cases}
p^{-\tau_{x_0}-\tau_{x_1}-\cdots-\tau_{x_{n-1}}-\kappa(x_n,y_n)} & \text{if} \ \ n \neq 0, \\
p^{-\kappa(x_0,y_0)} & \text{if} \ \ n = 0
\end{cases}
$$
where $n = n(x,y) = \min\{k \geq 0 : x_k \neq y_k \}$. It is clear that $d_f$ defines the same topology as the classical metric defined by $d(x,y)=p^{-n(x,y)}$.  Let $\sigma$ be the (left) shift transformation on $\Sigma_A$. Then $(\Sigma_A,\sigma,d_f)$ becomes a dynamical system.

\begin{theorem}[{\cite{Liao1}}]\label{conjugacytheorem}
Let $(X,J_f,f)$ be a transitive $p$-adic weak repeller with incidence matrix $A$. Then the dynamics $(J_f,f,|\cdot|_p)$ is isometrically conjugate to the shift dynamics $(\Sigma_A,\sigma,d_f)$.
\end{theorem}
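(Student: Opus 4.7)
The plan is to construct an itinerary coding $\pi : J_f \to \Sigma_A$ and show it is the sought isometric conjugacy. Since $\{\mathbb{B}_{p^{-\tau}}(a_j)\}_{j \in I}$ forms a disjoint partition of $X$ and every point of $J_f$ has its whole forward orbit in $X$, each $x \in J_f$ determines a unique sequence $x_k \in I$ by $f^k(x) \in \mathbb{B}_{p^{-\tau}}(a_{x_k})$; set $\pi(x) = (x_k)_{k \geq 0}$. Applying Lemma \ref{scaling} to the scaling identity \eqref{2.1}, the image $f(\mathbb{B}_{p^{-\tau}}(a_i))$ is a ball of radius $p^{-\tau+\tau_i}$; hence containment $\mathbb{B}_{p^{-\tau}}(a_{x_{k+1}}) \subset f(\mathbb{B}_{p^{-\tau}}(a_{x_k}))$ holds, i.e., $A_{x_k x_{k+1}} = 1$, so $\pi(x) \in \Sigma_A$. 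The semiconjugacy $\pi \circ f = \sigma \circ \pi$ is immediate from the definition.

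The isometric equality $|x - y|_p = d_f(\pi(x), \pi(y))$ is the core calculation. Let $x, y \in J_f$ with $\pi(x) = (x_k)$ and $\pi(y) = (y_k)$ and $n = \min\{k : x_k \neq y_k\}$. When $n \geq 1$, iterating \eqref{2.1} along the common prefix yields
\begin{equation*}
|f^n(x) - f^n(y)|_p = p^{\tau_{x_0} + \cdots + \tau_{x_{n-1}}} \, |x - y|_p.
\end{equation*}
Since $f^n(x) \in \mathbb{B}_{p^{-\tau}}(a_{x_n})$ and $f^n(y) \in \mathbb{B}_{p^{-\tau}}(a_{y_n})$ with $x_n \neq y_n$, the ultrametric triangle inequality forces $|f^n(x) - f^n(y)|_p = |a_{x_n} - a_{y_n}|_p = p^{-\kappa(x_n, y_n)}$, matching $d_f(\pi(x), \pi(y))$ exactly. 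The case $n = 0$ is identical without the scaling factor. This simultaneously yields injectivity of $\pi$ and the desired isometry.

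For surjectivity, given $(x_k) \in \Sigma_A$ I define the cylinders $C_n = \bigcap_{k=0}^n f^{-k}(\mathbb{B}_{p^{-\tau}}(a_{x_k}))$. Admissibility $A_{x_k x_{k+1}} = 1$ together with the bijectivity statement in Lemma \ref{scaling} guarantees inductively that each $C_n$ is a nonempty ball of radius $p^{-\tau - \tau_{x_0} - \cdots - \tau_{x_{n-1}}}$, with $C_{n+1} \subset C_n$. To conclude that $\bigcap_n C_n$ is a singleton one needs the exponent sums to diverge; a useful observation is that in the transitive case with $|I| \geq 2$ the weak-repeller hypothesis already upgrades to a genuine repeller, because if $\tau_i = 0$ then $f(\mathbb{B}_{p^{-\tau}}(a_i))$ is a single ball of radius $p^{-\tau}$, forcing $|I_i| \leq 1$ and contradicting irreducibility of $A$. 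Hence $\tau_{x_k} \geq \min_j \tau_j > 0$ for every admissible sequence, the diameters of $C_n$ tend to $0$, and by completeness of $\mathbb{Q}_p$ the intersection consists of a single point $x$; its orbit stays in $X$, so $x \in J_f$ and $\pi(x) = (x_k)$.

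The main subtle point is exactly this surjectivity step: reconciling the weak-repeller hypothesis with the need for diameter contraction along every admissible itinerary. Once the reduction to the genuinely expanding case is made, the remainder is a standard ultrametric adaptation of the itinerary-coding paradigm from real symbolic dynamics, and $\pi^{-1}$ automatically inherits continuity from the isometry, noting that $d_f$ induces the same topology on $\Sigma_A$ as the classical cylinder metric $d(x,y) = p^{-n(x,y)}$.
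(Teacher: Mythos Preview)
The paper does not prove this theorem; it is quoted from \cite{Liao1} and invoked as a black box in the proofs of Theorems~\ref{decompositionofQp} and~\ref{5.7}. So there is no paper proof to compare against, and your attempt must stand on its own. The overall architecture---itinerary coding, isometry calculation, nested cylinder intersection---is the standard one and is essentially how the cited reference proceeds. However, there is a genuine error in your treatment of the weak-repeller case.

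Your claim that a transitive weak repeller with $|I|\ge 2$ is automatically a genuine repeller is false. Take $p=2$, $X=\mathbb{Z}_2=\mathbb{B}_{1/2}(0)\cup\mathbb{B}_{1/2}(1)$, and define $f(x)=x+1$ on $2\mathbb{Z}_2$ and $f(x)=(x-1)/2$ on $1+2\mathbb{Z}_2$. Then $\tau_1=0$, $\tau_2=1$, and the incidence matrix is $\left(\begin{smallmatrix}0&1\\1&1\end{smallmatrix}\right)$, which is irreducible; yet the system is only a weak repeller. The flaw in your reasoning is that $|I_i|\le 1$ for a single index $i$ does not by itself contradict irreducibility.

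What you actually need, and what does follow from transitivity, is the weaker statement that $\sum_k\tau_{x_k}=\infty$ for \emph{every} admissible sequence $(x_k)$. If the sum were finite, then $x_k\in I_0:=\{j:\tau_j=0\}$ for all large $k$; since each $i\in I_0$ has $|I_i|\le 1$, the tail is eventually periodic inside $I_0$, and from any state on that periodic cycle only states in $I_0$ are reachable---contradicting irreducibility because $I\setminus I_0\neq\emptyset$ by the weak-repeller hypothesis. This divergence is what makes the cylinders $C_n$ shrink to a point, and it is also what is really needed for injectivity: your isometry calculation only treats the case $\pi(x)\neq\pi(y)$, while the implication $\pi(x)=\pi(y)\Rightarrow x=y$ is precisely the statement that $\bigcap_n C_n$ is a singleton. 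Both issues are repaired by the corrected lemma above.
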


The shift dynamics $(\Sigma_A,\sigma)$ is called a subshift of finite type determined by matrix $A$. When $A$ is a $n\times n$ matrix whose coefficients are all equal to $1$, $(\Sigma_A,\sigma)$ is called a full shift over an alphabet of $n$ symbols, and is usually denoted by $(\Sigma_n,\sigma)$.

\subsection{\texorpdfstring{$p$}{Lg}-adic measure}

Let $(X,\cb)$ be a measurable space, where $\cb$ is an algebra of
subsets $X$. A function $\m:\cb\to \bq_p$ is said to be a {\it
$p$-adic measure} if for any $A_1,\dots,A_n\in\cb$ such that
$A_i\cap A_j=\emptyset$ ($i\neq j$) the following equality holds
$$
\mu\bigg(\bigcup_{j=1}^{n} A_j\bigg)=\sum_{j=1}^{n}\mu(A_j).
$$

A $p$-adic measure is called a {\it probability measure} if
$\mu(X)=1$.  A $p$-adic
probability measure $\m$ is called {\it bounded} if
$\sup\{|\m(A)|_p : A\in \cb\}<\infty $. We are interested in this
important class of $p$-adic measures in which the boundedness condition itself
provides a fruitful integration theory. The reader may refer to \cite{Kh1996b,Kh1999,Kh2009} for more detailed
information about $p$-adic measures.

\subsection{Cayley tree}

Let $\Gamma^k_+ = (V,L)$ be a semi-infinite Cayley tree of order
$k\geq 1$ with the root $x^0$ (each vertex has exactly $k+1$
edges except for the root $x^0$ which has $k$ edges). Let $V$ be
a set of vertices and $L$ be a set of edges. The vertices $x$
and $y$ are called {\it nearest neighbors} if there exists an edge $l\in L$ connecting them. This edge is also denoted by
$l=\langle{x,y}\rangle.$ A collection of
the pairs $\langle{x,x_1}\rangle,\cdots,\langle{x_{d-1},y}\rangle$ is called {\it a path} between vertices $x$ and $y$. The distance $d(x,y)$ between $x,y\in V$ on
the Cayley tree is the length of the shortest path between $x$ and $y$. Let 
$$
W_{n}=\left\{ x\in V: d(x,x^{0})=n\right\}, \quad
V_n=\overset{n}{\underset{m=1}{\bigcup}}W_{m}, \quad L_{n}=\left\{
<x,y>\in L: x,y\in V_{n}\right\}.
$$
The set of direct successors of $x$ is defined by
$$
\forall\  x\in W_{n},\ \ S(x)=\left\{ y\in W_{n+1}:d(x,y)=1\right\}.
$$

We now introduce a coordinate structure in $V$.
Every vertex $x\neq x^{0}$ has the coordinate
$(i_1,\cdots,i_n)$ where $i_m\in\{1,\dots,k\},\ 1\leq m\leq n$ and
the vertex $x^0$ has the coordinate $(\emptyset)$. More precisely, the symbol $(\emptyset)$ constitutes level $0$ and the coordinates
$(i_1,\cdots,i_n)$ form level $n$ of $V$ (from the root $x^0$). In this case, for any $x\in V,\
x=(i_1,\cdots,i_n)$ we have
$$
S(x)=\{(x,i): 1\leq i\leq k\},
$$
where $(x,i)$ means $(i_1,\cdots,i_n,i)$. Let us define a binary operation $\circ:V\times V\to V$
as follows: for any two elements $x=(i_1,\cdots,i_n)$ and $y=(j_1,\cdots,j_m)$ we define
$$
x\circ y=(i_1,\cdots,i_n)\circ(j_1,\cdots,j_m)=(i_1,\cdots,i_n,j_1,\cdots,j_m)
$$
and
$$
y\circ x=(j_1,\cdots,j_m)\circ(i_1,\cdots,i_n)=(j_1,\cdots,j_m,i_1,\cdots,i_n).
$$
Then  $(V,\circ)$ is a noncommutative semigroup with the unit $x^0=(\emptyset)$.
Now, we can define a translation $\tau_g: V\to V$ for $ g\in V$
as follows $\tau_g(x)=g\circ x.$
Consequently, by means of $\tau_g,$ we define a translation $\tilde\tau_g:L\to L$ as $
\tilde\tau_g(\langle{x,y}\rangle)=\langle{\tau_g(x),\tau_g(y)}\rangle.$

Let $G\subset V$ be a sub-semigroup of $V$ and $h:L\to\bq_p$ be a function.
We say that $h$ is $G$-{\it periodic} if $h(\tilde\tau_g(l))=h(l)$ for all $g\in G$ and $l\in L$.
Any $V$-periodic function is called {\it translation-invariant}.

\subsection{\texorpdfstring{$p$-adic $q$}{Lg}-states Potts model}\label{Sec:2-5}

Let $\Phi=\{1,2,\cdots, q\}$ be a finite set.
A configuration (resp. a finite volume configuration, a boundary configuration) is a function $\sigma:V\to \Phi$ (resp. $\sigma_n:V_n\to \Phi,$ $\sigma^{(n)}:W_n\to \Phi$). We denote by $\Omega$ (resp. $\Omega_{V_n}$, $\Omega_{W_n}$) the set of all configurations (resp. all finite volume configurations, all boundary configurations). For given configurations $\sigma_{n-1}\in\Omega_{V_{n-1}}$ and $\sigma^{(n)}\in\Omega_{W_n}$, we define their concatenation to be a finite volume configuration $\sigma_{n-1}\vee\sigma^{(n)}\in\Omega_{V_n}$  such that
$$
\left(\sigma_{n-1}\vee\sigma^{(n)}\right)(v)=
\begin{cases}
\sigma_{n-1}(v) & \text{if} \ \ v\in V_{n-1} \\
\sigma^{(n)}(v) & \text{if} \ \ v\in W_n
\end{cases}.
$$

The Hamiltonian of \textit{$q$-states Potts model} with the spin value set $\Phi=\{1,2,\cdots q\}$ on the finite volume configuration is defined as follows
\begin{eqnarray*}\label{Hamiltonian}
H_n(\sigma_n)=J\sum\limits_{\langle x,y\rangle\in L_n}\delta_{\sigma_n(x)\sigma_n(y)},
\end{eqnarray*}
for all $\sigma_n\in \Omega_{V_n}$ and $n\in\mathbb{N}$ where $J\in \mathbb{B}_{ p^{-1/(p-1)}}(0)$ is a coupling constant, $\langle x,y\rangle$ stands for nearest neighbor vertices, and $\delta$ is Kronecker's symbol. 

\subsection{\texorpdfstring{$p$}{Lg}-adic Gibbs measure}

Let us present a construction of a $p$-adic Gibbs measure of the $p$-adic $q$-states Potts model. We define a $p$-adic measure $\mu^{(n)}_{\mathbf{\tilde{h}}}:\Omega_{V_n}\to\mathbb{Q}_p$ associated with a boundary function $\mathbf{\tilde{h}}:V\to\bq_p^{q},$ $\mathbf{\tilde{h}}(x)=\left({\tilde{h}}^{(1)}_{x},\dots, {\tilde{h}}^{(q)}_{x}\right),$ $x\in V$ as follows
\begin{eqnarray}\label{mu_h^n}
\mu_{\mathbf{\tilde{h}}}^{(n)}(\sigma_n)
=\frac{1}{\mathcal{Z}^{(n)}_{\mathbf{\tilde{h}}}}\exp_p\left\{H_n(\sigma_n)+\sum_{x\in W_n}{\tilde{h}}^{(\sigma_n(x))}_{x}\right\}
\end{eqnarray}
for all $\sigma_n\in \Omega_{V_n}, \ n\in\mathbb{N}$ where $\exp_p(\cdot):\mathbb{B}_{p^{-1/(p-1)}}(0)\to \mathbb{B}_1(1)$ is the $p$-adic exponential function and $\mathcal{Z}^{(n)}_{\mathbf{\tilde{h}}}$ is {\it the partition function} defined by
\begin{eqnarray*}\label{ZN}
	\mathcal{Z}^{(n)}_{\mathbf{\tilde{h}}}=\sum_{\sigma_n\in\Omega_{V_n}}\exp_p\left\{H_n(\sigma_n)+\sum_{x\in W_n}{\tilde{h}}^{(\sigma_n(x))}_{x}\right\}
\end{eqnarray*}
for all $n\in\mathbb{N}.$ Throughout this paper, we consider $\mathbf{\tilde{h}}(x)\in\left(\mathbb{B}_{p^{-1/(p-1)}}(0)\right)^{q}$ for any $x\in V.$

The $p$-adic measures \eqref{mu_h^n} are called \textit{compatible} if one has that
\begin{equation}\label{compatibility}
\sum_{\sigma^{(n)}\in \Omega_{W_n}}\mu_{\mathbf{\tilde{h}}}^{(n)}(\sigma_{n-1}\vee \sigma^{(n)})=\mu_{\mathbf{\tilde{h}}}^{(n-1)}(\sigma_{n-1})
\end{equation}
for all $\sigma_{n-1}\in \Omega_{V_{n-1}}$ and $n\in \mathbb{N}$.

Due to Kolmogorov's extension theorem of the $p$-adic measures \eqref{mu_h^n} (see \cite{GMR,Kh2002,LK}), there exists a unique $p$-adic measure $\mu_{\mathbf{\tilde{h}}}:\Omega\to\mathbb{Q}_p$ such that
$$\mu_{\mathbf{\tilde{h}}}(\{\sigma\mid_{V_n}=\sigma_n\})=\mu_{\mathbf{\tilde{h}}}^{(n)}(\sigma_n), \quad \forall \ \sigma_n\in \Omega_{V_n}, \ \forall\  n\in\mathbb{N}.$$ 
This uniquely extended measure $\mu_{\mathbf{\tilde{h}}}:\Omega\to\mathbb{Q}_p$ is called \textit{a $p$-adic Gibbs measure}. The following theorem describes the condition on the boundary function $\mathbf{{\tilde{h}}}:V\to\bq_p^{q}$ in which the compatibility condition \eqref{compatibility}  is satisfied.

\begin{theorem}[\cite{MR1,MR2}]\label{Equationforh} Let $\mathbf{{\tilde{h}}}:V\to\bq_p^{q},$ $\mathbf{\tilde{h}}(x)=\left(\tilde{h}^{(1)}_{x},\cdots, \tilde{h}^{(q)}_{x}\right)$ be a given boundary function and $\mathbf{{{h}}}:V\to\bq_p^{q-1},$ $\mathbf{h}(x)=\left(h^{(1)}_{x},\cdots, h^{(q-1)}_{x}\right)$ be a function defined as $h^{(i)}_{x}={\tilde{h}}^{(i)}_{x}-{\tilde{h}}^{(q)}_{x}$ for all $ i=\overline{1,q-1}.$ Then the $p$-adic probability distributions $\left\{\mu_{\mathbf{\tilde{h}}}^{(n)}\right\}_{n\in\mathbb{N}}$  are compatible if and only if
	\begin{equation}\label{systemofequationforh}
	\mathbf{{h}}(x)=\sum_{y\in S(x)}\mathbf{F}(\mathbf{h}(y)), \quad \forall \ x\in V\setminus\{x^0\},
	\end{equation}
	where $S(x)$ is the set of direct successors of $x$ and the function $\mathbf{F}:\mathbb{Q}_p^{q-1}\to \mathbb{Q}_p^{q-1},$ $\mathbf{F}(\mathbf{h})=(F_1,\cdots,F_{q-1})$ for $ \mathbf{h}=(h_1, \cdots,h_{q-1})$ is defined as follows
$$F_i=\ln_p\left(\frac{(\theta-1)\exp_p(h_i)+\sum_{j=1}^{q-1}\exp_p(h_j)+1}{\theta+ \sum_{j=1}^{q-1}\exp_p(h_j)}\right), \quad \theta=\exp_p(J).$$
\end{theorem}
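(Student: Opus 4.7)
My plan is to unfold the compatibility condition \eqref{compatibility} directly from the definition \eqref{mu_h^n} and reduce it, through a change of variables and an application of $\ln_p$, to the vector recursion \eqref{systemofequationforh}. First I would split the Hamiltonian along the last generation:
\[
H_n(\sigma_{n-1}\vee\sigma^{(n)})=H_{n-1}(\sigma_{n-1})+J\sum_{x\in W_{n-1}}\sum_{y\in S(x)}\delta_{\sigma_{n-1}(x),\sigma^{(n)}(y)},
\]
and use $\exp_p(J\delta)=\theta^{\delta}$ together with the multiplicativity of $\exp_p$ (valid since all arguments lie in $\mathbb{B}_{p^{-1/(p-1)}}(0)$, by Lemma \ref{21}). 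The key observation is that once $H_{n-1}(\sigma_{n-1})$ is pulled out, the remaining factor decouples as a product over $y\in W_n$, each term summable independently:
\[
\sum_{j=1}^{q}\theta^{\delta_{\sigma_{n-1}(x),j}}\exp_p(\tilde h^{(j)}_y)=(\theta-1)\exp_p(\tilde h^{(\sigma_{n-1}(x))}_y)+\sum_{j=1}^{q}\exp_p(\tilde h^{(j)}_y).
\]

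Next, to eliminate the ratio $\mathcal{Z}^{(n-1)}_{\mathbf{\tilde h}}/\mathcal{Z}^{(n)}_{\mathbf{\tilde h}}$, I would compare the compatibility relation for two configurations $\sigma_{n-1}$ that agree everywhere except at a single vertex $x\in W_{n-1}$ where they take the values $i$ and $q$ respectively. The common factor $\exp_p(H_{n-1}(\sigma_{n-1}))$, the partition‐function ratio, and all terms indexed by $y\notin S(x)$ cancel, leaving the local identity
\[
\frac{\exp_p(\tilde h^{(i)}_x)}{\exp_p(\tilde h^{(q)}_x)}=\prod_{y\in S(x)}\frac{(\theta-1)\exp_p(\tilde h^{(i)}_y)+\sum_{j}\exp_p(\tilde h^{(j)}_y)}{(\theta-1)\exp_p(\tilde h^{(q)}_y)+\sum_{j}\exp_p(\tilde h^{(j)}_y)}.
\]
Substituting $\exp_p(\tilde h^{(j)}_y)=\exp_p(\tilde h^{(q)}_y)\exp_p(h^{(j)}_y)$ (with the convention $h^{(q)}\equiv 0$) makes the factor $\exp_p(\tilde h^{(q)}_y)$ cancel from numerator and denominator, and the denominator becomes $\theta+\sum_{j=1}^{q-1}\exp_p(h^{(j)}_y)$. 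Applying $\ln_p$ (which is well‐defined here because both sides lie in $\mathcal{E}_p$, by Lemma \ref{epproperty} and the smallness assumptions on $|\theta-1|_p$ and the coordinates of $\mathbf{\tilde h}$) yields exactly the $i$-th component of \eqref{systemofequationforh}.

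For the converse, assume \eqref{systemofequationforh}. Running the above computation backwards, the local identity at each $x\in W_{n-1}$ holds, which means the ratio
\[
R_n(\sigma_{n-1}):=\frac{\sum_{\sigma^{(n)}}\exp_p\bigl\{H_n(\sigma_{n-1}\vee\sigma^{(n)})+\sum_{y\in W_n}\tilde h^{(\sigma^{(n)}(y))}_y\bigr\}}{\exp_p\bigl\{H_{n-1}(\sigma_{n-1})+\sum_{x\in W_{n-1}}\tilde h^{(\sigma_{n-1}(x))}_x\bigr\}}
\]
does not depend on $\sigma_{n-1}$. Denote its common value by $a_n$; then the marginalisation identity $\sum_{\sigma^{(n)}}\mu^{(n)}_{\mathbf{\tilde h}}(\sigma_{n-1}\vee\sigma^{(n)})=\frac{a_n\mathcal{Z}^{(n-1)}_{\mathbf{\tilde h}}}{\mathcal{Z}^{(n)}_{\mathbf{\tilde h}}}\,\mu^{(n-1)}_{\mathbf{\tilde h}}(\sigma_{n-1})$ holds, and summing over $\sigma_{n-1}\in\Omega_{V_{n-1}}$ forces the prefactor to be $1$, giving \eqref{compatibility}.

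The main obstacle I foresee is purely of a $p$-adic analytic nature: one must verify at each step that the arguments of $\exp_p$ remain in $\mathbb{B}_{p^{-1/(p-1)}}(0)$ and those of $\ln_p$ in $\mathcal{E}_p$, so that the functional identities $\exp_p(a+b)=\exp_p(a)\exp_p(b)$ and $\ln_p\exp_p=\mathrm{id}$ of Lemma \ref{21} are legitimately invoked. Once these domain conditions are checked using $J\in\mathbb{B}_{p^{-1/(p-1)}}(0)$ and $\mathbf{\tilde h}(x)\in(\mathbb{B}_{p^{-1/(p-1)}}(0))^{q}$, the algebraic manipulations above proceed routinely.
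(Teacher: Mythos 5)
The paper does not actually prove this theorem---it is quoted from \cite{MR1,MR2}---but your argument is correct and is precisely the standard derivation used in those references: factorize the sum over the last generation $W_n$, compare configurations differing at a single vertex of $W_{n-1}$ (value $i$ versus $q$) to eliminate the partition functions and obtain the local identity, and recover compatibility from \eqref{systemofequationforh} by the normalization trick that forces the constant $a_n\mathcal{Z}^{(n-1)}_{\mathbf{\tilde h}}/\mathcal{Z}^{(n)}_{\mathbf{\tilde h}}$ to equal $1$. One small point of bookkeeping: the factor $\exp_p(H_{n-1}(\sigma_{n-1}))$ is not common to the two configurations you compare (changing $\sigma_{n-1}$ at $x\in W_{n-1}$ changes $H_{n-1}$ through the edge to the parent of $x$); rather it cancels between the two sides of each compatibility identity before you take the ratio, which is what your computation in fact uses, so the conclusion stands.
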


\subsection{Translation-invariant \texorpdfstring{$p$}{Lg}-adic Gibbs measures} A $p$-adic Gibbs measure is translation-invariant (in short TIpGM) if and only if the boundary function $\mathbf{{\tilde{h}}}:V\to\bq_p^{q}$ is constant, i.e., $\mathbf{{\tilde{h}}}(x)=\mathbf{\tilde{h}}$ for $x\in V.$ In this case, the condition \eqref{systemofequationforh} takes the form $\mathbf{h}=k\mathbf{F}(\mathbf{h})$ or equivalently
$$
h_i=\ln_p\left(\frac{(\theta-1)\exp_p(h_i)+\sum_{j=1}^{q-1}\exp_p(h_j)+1}{\theta+ \sum_{j=1}^{q-1}\exp_p(h_j)}\right)^k, \quad 1\leq i \leq q-1.
$$ 
Let $\mathbf{z}=(z_1,\cdots,z_{q-1})\in\mathcal{E}_p^{q-1}$ such that $z_i=\exp_p(h_i)$ for any $1 \leq i \leq q-1.$ In what follows, we write $\mathbf{z}=\exp_p(\mathbf{h}).$ Then we have the following result.

\begin{theorem}[\cite{MR1,MR2}]\label{existenceTIpGM}
Let $\mathbf{{\tilde{h}}}:V\to\bq_p^{q}$ be a boundary function, $\mathbf{\tilde{h}}(x)=\mathbf{\tilde{h}}=(\tilde{h}_1,\cdots, \tilde{h}_{q})$. There exists a translation-invariant $p$-adic Gibbs measure $\mu_{\mathbf{\tilde{h}}}:\Omega\to\mathbb{Q}_p$ associated with a boundary function $\mathbf{{\tilde{h}}}$ for any $x\in V$ and $\mathbf{h}=(h_1,\cdots, h_{q-1})$ such that $h_i=\tilde{h}_i-\tilde{h}_q$ for $1\leq i \leq q-1$ if and only if $(z_1,\cdots,z_{q-1})=\mathbf{z}=\exp_p(\mathbf{h})=\left(exp_p(h_1),\cdots, exp_p(h_{q-1})\right)\in\mathcal{E}_p^{q-1}$ is a solution of the following system of equations 
	\begin{equation}\label{equationwrtz}
	z_i=\left(\frac{(\theta-1)z_i+\sum_{j=1}^{q-1}z_j+1}{\theta+ \sum_{j=1}^{q-1}z_j}\right)^k, \quad 1\leq i \leq q-1.
	\end{equation}
\end{theorem}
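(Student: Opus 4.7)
The plan is to specialize Theorem \ref{Equationforh} to the translation-invariant setting and then pass from the additive variable $\mathbf{h}$ to the multiplicative variable $\mathbf{z}=\exp_p(\mathbf{h})$ via the $p$-adic exponential. Since a TIpGM corresponds by definition to a constant boundary function $\mathbf{\tilde h}(x)=\mathbf{\tilde h}$, the auxiliary vector $\mathbf{h}(x)=(\tilde h_1-\tilde h_q,\dots,\tilde h_{q-1}-\tilde h_q)$ is also constant; and since each vertex $x\neq x^0$ has exactly $k$ direct successors, the compatibility equation \eqref{systemofequationforh} collapses to the single vector equation $\mathbf{h}=k\,\mathbf{F}(\mathbf{h})$. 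Componentwise this reads
\begin{equation*}
h_i \;=\; k\,\ln_p\!\left(\frac{(\theta-1)\exp_p(h_i)+\sum_{j=1}^{q-1}\exp_p(h_j)+1}{\theta+\sum_{j=1}^{q-1}\exp_p(h_j)}\right), \qquad 1\leq i\leq q-1.
\end{equation*}

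Next I would perform the change of variable $z_i=\exp_p(h_i)$. By Lemma \ref{21}, the map $\exp_p:\mathbb{B}_{p^{-1/(p-1)}}(0)\to\ce_p$ is a bijection with inverse $\ln_p$, so the correspondence $\mathbf{h}\leftrightarrow\mathbf{z}$ is a bijection between $\bigl(\mathbb{B}_{p^{-1/(p-1)}}(0)\bigr)^{q-1}$ and $\ce_p^{q-1}$, which matches the hypothesis of the theorem. Note that $\theta=\exp_p(J)\in\ce_p$ as well, since $J\in\mathbb{B}_{p^{-1/(p-1)}}(0)$. Applying $\exp_p$ to both sides of the displayed equation and using the two identities $\exp_p(k\ln_p u)=u^k$ and $\exp_p(h_i)=z_i$ (valid provided $u\in\ce_p$) yields
\begin{equation*}
z_i \;=\; \left(\frac{(\theta-1)z_i+\sum_{j=1}^{q-1}z_j+1}{\theta+\sum_{j=1}^{q-1}z_j}\right)^{k}, \qquad 1\leq i\leq q-1,
\end{equation*}
which is exactly \eqref{equationwrtz}. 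Since every step is reversible (apply $\ln_p$ and divide by $k$), the two systems are equivalent, and the theorem follows from Theorem \ref{Equationforh} applied to the constant boundary function.

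The main obstacle is verifying that the manipulations involving $\exp_p$ and $\ln_p$ stay inside their domains of convergence, and in particular that the ratio inside the logarithm lies in $\ce_p$ so that $\ln_p$ is defined and its image is in $\mathbb{B}_{p^{-1/(p-1)}}(0)$. To check this, write $\theta=1+(\theta-1)$ and $z_j=1+(z_j-1)$, where by Lemma \ref{epproperty} each of $\theta-1$ and $z_j-1$ lies in $\mathbb{B}_{p^{-1/(p-1)}}(0)$; expanding the numerator and denominator shows that both differ from $q$ by a quantity of norm at most $p^{-1/(p-1)}$, and since $q\in\ce_p$ (we are in the regime $|q|_p\le 1$ and, when needed, $q$ is a unit), the strong triangle inequality forces the ratio to lie in $\ce_p=\mathbb{B}_1(1)$ (for $p\geq 3$) with the appropriate control for $p=2$. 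Then $\ln_p$ of that ratio has $p$-adic absolute value equal to that of its argument minus one, hence lies in $\mathbb{B}_{p^{-1/(p-1)}}(0)$, and multiplication by the integer $k$ keeps us there. Once these inclusions are verified, the identities $\exp_p\circ\ln_p=\mathrm{id}$ and $\ln_p\circ\exp_p=\mathrm{id}$ from Lemma \ref{21} make the equivalence of the two formulations transparent.
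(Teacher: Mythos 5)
Your proposal is correct and follows essentially the same route the paper intends: Theorem \ref{existenceTIpGM} is stated as a citation of \cite{MR1,MR2}, and the text immediately preceding it carries out exactly your reduction — constancy of $\mathbf{\tilde h}$ collapses \eqref{systemofequationforh} to $\mathbf{h}=k\mathbf{F}(\mathbf{h})$, and the substitution $z_i=\exp_p(h_i)$ together with Lemma \ref{21} converts this to \eqref{equationwrtz}. One small correction to your domain check: the claim that $q\in\mathcal{E}_p$ (or that $q$ is a unit) is false in the paper's standing regime $0<|\theta-1|_p<|q|_p<1$; the right observation is that the ratio minus $1$ equals $(\theta-1)(z_i-1)\big/\big(\theta+\sum_{j}z_j\big)$, whose numerator has norm $|\theta-1|_p\,|z_i-1|_p$ while the denominator has norm $|q|_p$ under the paper's hypotheses, so the quotient is small precisely because $|\theta-1|_p<|q|_p$, and the ratio does land in $\mathcal{E}_p$ as needed.
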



Let $\mathbf{I}_{q-1}=\{1,\cdots,q-1\}$. For $j\in\mathbf{I}_{q-1}$, let $\mathbf{e}_j=(\delta_{1j},\delta_{2j},\cdots \delta_{q-1j})\in\mathbb{Q}_p^{q-1}$ and for $\alpha \subset \mathbf{I}_{q-1}$, let $\mathbf{e}_\alpha=\sum\limits_{j\in\alpha}\mathbf{e}_j$. For any $\mathbf{z}=(z_1,\cdots,z_{q-1})\in\mathbb{Q}_p^{q-1}$, let $\{\alpha_j(\mathbf{z})\}_{j=1}^d$ be a disjoint partition of the index set  $\mathbf{I}_{q-1}$, i.e., $\bigcup\limits_{j=1}^d\alpha_j(\mathbf{z})=\mathbf{I}_{q-1}, \ \alpha_{j_1}(\mathbf{z})\cap\alpha_{j_2}(\mathbf{z})=\emptyset$ for $j_1\neq j_2$ such that $z_{i_1}=z_{i_2}$ for all $i_1,i_2\in \alpha_j(\mathbf{z})$ and  $z_{i_1}\neq z_{i_2}$ for all $i_1\in \alpha_{j_1}(\mathbf{z})$, $i_2\in \alpha_{j_2}(\mathbf{z})$. Then \begin{eqnarray}\label{foranyk}
\mathbf{z}=\sum\limits_{j=1}^dz_j^{\circ}\mathbf{e}_{\alpha_j(\mathbf{z})}
\end{eqnarray}
where $z_i=z_j^{\circ}$ for any $i\in\alpha_j(\mathbf{z})$ and $1\leq j \leq d.$
 
\begin{theorem}[See \cite{SMAA2015d}]\label{relationdk}
 If $\mathbf{z}\in\mathcal{E}_p^{q-1}$ is a solution of the system  \eqref{equationwrtz} then $1\leq d\leq k.$
\end{theorem}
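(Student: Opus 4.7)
The plan is to reduce the multivariate system \eqref{equationwrtz} to a single univariate polynomial equation satisfied by each coordinate. Fix a solution $\mathbf{z}=(z_1,\ldots,z_{q-1}) \in \mathcal{E}_p^{q-1}$ and set
$$A := \theta + \sum_{j=1}^{q-1} z_j, \qquad B := 1 + \sum_{j=1}^{q-1} z_j.$$
These are symmetric functions of $\mathbf{z}$, so they do not depend on the index $i$. Rewriting \eqref{equationwrtz} as $A^k z_i = ((\theta-1)z_i + B)^k$ shows that every coordinate $z_i$ is a root of the same single-variable polynomial
$$P(X) := ((\theta-1)X + B)^k - A^k X \ \in \ \mathbb{Q}_p[X].$$

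Next I would split on whether $\theta = 1$. In the degenerate case $\theta=1$, the polynomial collapses to $P(X)=B^k-A^k X$, a nonzero linear polynomial whose unique root forces all the $z_i$ to coincide, so $d=1\leq k$. In the generic case $\theta\neq 1$, the leading coefficient of $P$ is $(\theta-1)^k\neq 0$, hence $\deg P = k$ and $P$ admits at most $k$ distinct roots in $\mathbb{Q}_p$. Since the distinct values $z_1^{\circ},\ldots,z_d^{\circ}$ appearing in the partition \eqref{foranyk} are by definition the distinct entries of $\mathbf{z}$, and each of them is a root of $P$, we conclude $d\leq k$. The lower bound $d\geq 1$ is automatic, as $\mathbf{z}$ has at least one coordinate.

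The essential observation -- really the only substantive step -- is that the right-hand side of \eqref{equationwrtz} depends on the index $i$ only through $z_i$ itself, because the sum $\sum_j z_j$ appearing in both numerator and denominator is symmetric in the coordinates. Once this symmetry is exploited, the claim becomes the elementary fact that a degree-$k$ univariate polynomial over $\mathbb{Q}_p$ has at most $k$ roots, so there is no real obstacle in the argument beyond recognising the reduction.
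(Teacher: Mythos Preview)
Your proof is correct. The paper does not actually contain a proof of this theorem; it is stated with a citation to \cite{SMAA2015d} and no argument is given here. Your reduction to the single polynomial $P(X)=((\theta-1)X+B)^k-A^kX$ and the observation that all coordinates are roots of this degree-$k$ polynomial is exactly the standard argument one expects for this result, and it is essentially what appears in the cited reference. One very minor point: in the degenerate case $\theta=1$ you might note explicitly that $A\neq 0$ is forced because $A$ is the common denominator in \eqref{equationwrtz}, so the linear polynomial really is nonzero; otherwise the argument is complete as written.
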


The descriptions of all TIpGMs for the cases $k=2,3$ were given in \cite{RUKO2015,SMAA2015d}.

\begin{theorem}[Descriptions of TIpGMs for $k=2,$ \cite{RUKO2015}]\label{Descriptionq>2}
	There exists a TIpGM $\mu_{\mathbf{\tilde{h}}}:\Omega\to\mathbb{Q}_p$ associated with a boundary function  $\mathbf{\tilde{h}}=(\tilde{h}_1,\cdots, \tilde{h}_{q})$ if and only if $\tilde{h}_j=\ln_p(hz_j)$ for all $1\leq j \leq q-1$ and $\tilde{h}_q=\ln_p(h)$ where $h\in\mathcal{E}_p$ and $\mathbf{z}=(z_1,\cdots z_{q-1})\in \mathcal{E}_p^{q-1}$ is defined as either one of the following form
	\begin{itemize}
		\item[(\textbf{A})] $\mathbf{z}=\left(1,\cdots,1\right);$
		\item[(\textbf{B})] $\mathbf{z}=\left(z,\cdots,z\right)$ where $z\in\mathcal{E}_p\setminus\{1\}$ is a root of the following quadratic equation 
		\begin{eqnarray*}
		(q-1)^2z^2+\left(2(q-1)-(\theta-1)^2\right)z+1=0;
		\end{eqnarray*}
		\item[(\textbf{C})] $\mathbf{z}=\mathbf{e}_{\alpha_1}+z\mathbf{e}_{\alpha_2}$ with $|\alpha_i|=m_i, \ m_1+m_2=q-1$ such that  $z\in\mathcal{E}_p\setminus\{1\}$ is a root of the following quadratic equation
		\begin{eqnarray*}
		m_2^2z^2+\left[2m_2(m_1+1)-(\theta-1)^2\right]z+(m_1+1)^2=0.
		\end{eqnarray*}
		\end{itemize}
\end{theorem}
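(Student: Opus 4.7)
The plan is to invoke Theorem \ref{existenceTIpGM} to reduce the existence of a TIpGM with boundary function $\mathbf{\tilde h}$ to the solvability, inside $\mathcal{E}_p^{q-1}$, of the fixed-point system \eqref{equationwrtz} specialised to $k=2$, and then to apply Theorem \ref{relationdk}, which forces the number $d$ of distinct components of $\mathbf{z}$ to satisfy $1\leq d\leq 2$. The parametrization $\tilde h_j=\ln_p(hz_j)$, $\tilde h_q=\ln_p(h)$ with $h\in\mathcal{E}_p$ is just the rewriting $h_j=\tilde h_j-\tilde h_q$ and $z_j=\exp_p(h_j)\in\mathcal{E}_p$ from Theorem \ref{existenceTIpGM}, so the alternatives (\textbf{A})--(\textbf{C}) will correspond exactly to the three shapes that a solution $\mathbf{z}\in\mathcal{E}_p^{q-1}$ can take.

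For $d=1$, I write $\mathbf{z}=(z,\ldots,z)$ with $z\in\mathcal{E}_p$. Substituting $\sum_j z_j=(q-1)z$ into \eqref{equationwrtz} reduces it to
\[
z\bigl(\theta+(q-1)z\bigr)^2=\bigl((\theta+q-2)z+1\bigr)^2,
\]
a cubic in $z$ with leading coefficient $(q-1)^2$ and constant term $-1$. The value $z=1$ is a trivial root since numerator and denominator in \eqref{equationwrtz} coincide there; polynomial division by $(z-1)$ produces $(q-1)^2z^2+[2(q-1)-(\theta-1)^2]z+1=0$. The root $z=1$ gives (\textbf{A}), and any root in $\mathcal{E}_p\setminus\{1\}$ of the reduced quadratic gives (\textbf{B}); the converse direction is a direct verification.

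For $d=2$, the permutation symmetry of \eqref{equationwrtz} allows me to partition $\mathbf{I}_{q-1}=\alpha_1\sqcup\alpha_2$ with $|\alpha_i|=m_i$ and to write $\mathbf{z}=u\,\mathbf{e}_{\alpha_1}+v\,\mathbf{e}_{\alpha_2}$ with $u\neq v$. Setting $N=m_1u+m_2v$, both $u$ and $v$ must satisfy the \emph{same} equation
\[
x(\theta+N)^2=\bigl((\theta-1)x+N+1\bigr)^2,
\]
which is a quadratic in $x$. Since $x=1$ is always a root, Vieta's formula gives the other root as $(N+1)^2/(\theta-1)^2$. Hence, up to relabelling, $u=1$ and $v=z$ with $(\theta-1)^2 z=(m_1+1+m_2z)^2$; expanding this identity produces precisely the quadratic $m_2^2z^2+[2m_2(m_1+1)-(\theta-1)^2]z+(m_1+1)^2=0$ of case (\textbf{C}). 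Conversely, any $z\in\mathcal{E}_p\setminus\{1\}$ satisfying this quadratic yields a valid solution $\mathbf{z}=\mathbf{e}_{\alpha_1}+z\,\mathbf{e}_{\alpha_2}$.

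The main obstacle is the $d=2$ step: recognising that both distinct components of $\mathbf{z}$ are roots of a common one-variable quadratic, and that $x=1$ is automatically a root, so that one of the two distinct values is forced to equal $1$. Once this key observation is made, the remaining work is polynomial bookkeeping: the factorisation by $(z-1)$ of the cubic appearing for $d=1$, and the expansion of $(m_1+1+m_2z)^2=(\theta-1)^2z$ for $d=2$. Throughout, membership of the resulting roots in $\mathcal{E}_p$ is automatic from the constraint $\mathbf{z}\in\mathcal{E}_p^{q-1}$ already present in Theorem \ref{existenceTIpGM}.
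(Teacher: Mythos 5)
Your argument is correct, but note that the paper itself offers no proof of this statement: Theorem \ref{Descriptionq>2} is imported verbatim from \cite{RUKO2015}, so there is nothing internal to compare against. What you have written is a self-contained reconstruction that stays entirely within the framework the paper does set up: Theorem \ref{existenceTIpGM} to translate existence of a TIpGM into solvability of \eqref{equationwrtz} in $\mathcal{E}_p^{q-1}$ (with the parametrization $\tilde h_j=\ln_p(hz_j)$, $\tilde h_q=\ln_p(h)$ being exactly the inversion of $h_j=\tilde h_j-\tilde h_q$, $z_j=\exp_p(h_j)$), and Theorem \ref{relationdk} to bound the number of distinct components by $d\leq k=2$. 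Your key observation in the $d=2$ case --- that with $N=m_1u+m_2v$ fixed, both $u$ and $v$ are roots of the single quadratic $\left((\theta-1)x+N+1\right)^2=(\theta+N)^2x$, which always admits $x=1$ as a root, so that distinctness forces one of the two values to equal $1$ and Vieta pins the other at $(N+1)^2/(\theta-1)^2$ --- is a clean way to obtain case (\textbf{C}); the computation $(\theta-1)^2z=(m_1+1+m_2z)^2$ does expand to the stated quadratic, and the $d=1$ cubic does factor through $(z-1)$ into the quadratic of case (\textbf{B}) (indeed (\textbf{B}) is the $m_1=0$ degeneration of (\textbf{C})). The original argument in \cite{RUKO2015} reaches the same case split by subtracting the two component equations and cancelling $u-v$, which is equivalent but slightly less transparent; your quadratic-with-root-$1$ observation buys a shorter route to the conclusion that one block of coordinates must equal $1$. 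The only point worth flagging is the implicit assumption $\theta+N\neq0$ (nonvanishing of the denominator in \eqref{equationwrtz}), which is needed for the equations to make sense but is harmless here since it is already presupposed by the statement of Theorem \ref{existenceTIpGM}.
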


\begin{theorem}[Descriptions of TIpGMs for $k=3,$ \cite{SMAA2015d}]\label{Descriptionq>3}
	There exists a TIpGM $\mu_{\mathbf{\tilde{h}}}:\Omega\to\mathbb{Q}_p$ associated with a boundary function  $\mathbf{\tilde{h}}=(\tilde{h}_1,\cdots, \tilde{h}_{q})$ if and only if $\tilde{h}_j=\ln_p(hz_j)$ for all $1\leq j\leq q-1$ and $\tilde{h}_q=\ln_p(h)$ where $h\in\mathcal{E}_p$ is any $p$-adic number and $\mathbf{z}=(z_1,\cdots z_{q-1})\in \mathcal{E}_p^{q-1}$ is defined either one of the following form
	\begin{itemize}
		\item[(\textbf{A})] $\mathbf{z}=\left(1,\cdots,1\right);$
		\item[(\textbf{B})] $\mathbf{z}=\left(z,\cdots,z\right)$ where $z\in\mathcal{E}_p\setminus\{1\}$ is a root of the following cubic equation 
		\begin{multline*}
		\quad \quad \quad (q-1)^3z^3+\left(3(q-1)^2-(\theta-1)^2(\theta+3(q-1)-1)\right)z^2\\
		+\left(3(q-1)-(\theta-1)^2(\theta+2)\right)z+1=0;
		\end{multline*}
		\item[(\textbf{C})] $\mathbf{z}=\mathbf{e}_{\alpha_1}+z\mathbf{e}_{\alpha_2}$ with $|\alpha_i|=m_i, \ m_1+m_2=q-1$ such that  $z\in\mathcal{E}_p\setminus\{1\}$ is a root of the following cubic equation
		\begin{multline*}
		\quad \quad \quad	m_2^3z^3+\left[3m_2^2(m_1+1)-(\theta-1)^2(\theta+3m_2-1)\right]z^2\\
		+\left[3m_2(m_1+1)^2-(\theta-1)^2(\theta+3(m_1+1)-1)\right]z+(m_1+1)^3=0;
		\end{multline*}
		\item[(\textbf{D})] $\mathbf{z}=z_1\mathbf{e}_{\alpha_1}+z_2\mathbf{e}_{\alpha_2}$  with $|\alpha_i|=m_i, \ m_1+m_2=q-1$ such that  
		\begin{itemize}
			\item[(i)] 	If  $m_1\neq \frac{1-\theta}{3}$ then 
			$z_1=-\frac{(\theta-1+3m_2)z_2+\theta+2}{(\theta-1+3m_1)}\in\mathcal{E}_p\setminus\{1\}$
			and $z_2\in\mathcal{E}_p\setminus\{1\}$ is a root of the following cubic equation 
			\begin{multline*}
			\quad \quad \quad\quad \quad \left[(m_1-m_2)z+(m_1-1)\right]^3\\
			+(\theta-1+3m_1)^2\left[(\theta-1+3m_2)z^2+(\theta+2)z\right]=0;
			\end{multline*}
			\item[(ii)] If  $m_2\neq \frac{1-\theta}{3}$ then 
			$z_2=-\frac{(\theta-1+3m_1)z_1+\theta+2}{(\theta-1+3m_2)}\in\mathcal{E}_p\setminus\{1\}$
			and $z_1\in\mathcal{E}_p\setminus\{1\}$ is a root of the following cubic equation 
			\begin{multline*}
			\quad \quad \quad\quad \quad \left[(m_2-m_1)z+(m_2-1)\right]^3\\
			+(\theta-1+3m_2)^2\left[(\theta-1+3m_1)z^2+(\theta+2)z\right]=0;
			\end{multline*}
		\end{itemize}
		\item[(\textbf{E})] $\mathbf{z}=z_1\mathbf{e}_{\alpha_1}+z_2\mathbf{e}_{\alpha_2}+\mathbf{e}_{\alpha_3}$ with $|\alpha_i|=m_i, \ m_1+m_2+m_3=q-1$ such that 
		\begin{itemize}
			\item[(i)] If $m_1\neq \frac{1-\theta}{3}$ then 
			$z_1=-\frac{(\theta-1+3m_2)z_2+3m_3+\theta+2}{(\theta-1+3m_1)}\in\mathcal{E}_p\setminus\{1\}$
			and $z_2\in\mathcal{E}_p\setminus\{1\}$ is a root of the following cubic equation
			\begin{multline*}
			\quad \quad \quad\quad \quad \left[(m_1-m_2)z+(m_1-m_3-1)\right]^3+(\theta-1+ 3m_1)^2(\theta-1+3m_2)z^2\\
			+(\theta-1+3m_1)^2(3m_3+\theta+2)z=0;
			\end{multline*}
			\item[(ii)]  If  $m_2\neq \frac{1-\theta}{3}$ then 
			$z_2=-\frac{(\theta-1+3m_1)z_1+3m_3+\theta+2}{(\theta-1+3m_2)}\in\mathcal{E}_p\setminus\{1\}$
			and $z_1\in\mathcal{E}_p\setminus\{1\}$ is a root of the following cubic equation 
			\begin{multline*}
			\quad \quad \quad\quad \quad	\left[(m_2-m_1)z+(m_2-m_3-1)\right]^3+(\theta-1+3m_2)^2(\theta-1+3m_1)z^2\\
			+(\theta-1+3m_2)^2(3m_3+\theta+2)z=0;	
			\end{multline*}
			\item[(iii)] If $\theta=1-q$ and $m_1=m_2=m_3+1$ then either $z_1\in\mathcal{E}_p\setminus\{1\}$ or $z_2\in\mathcal{E}_p\setminus\{1\}$ is any $p$-adic number  so that the second one is a root of the cubic equation $(z_1+z_2+1)^3=27z_1z_2$.
		\end{itemize}
	\end{itemize}
\end{theorem}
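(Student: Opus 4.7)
By Theorem \ref{existenceTIpGM}, classifying translation-invariant $p$-adic Gibbs measures for $k=3$ is equivalent to classifying all $\mathbf{z}\in\mathcal{E}_p^{q-1}$ satisfying the fixed-point system \eqref{equationwrtz} with $k=3$; by Theorem \ref{relationdk}, such a solution has at most $d=3$ distinct coordinate values. The plan is therefore to case-split on $d$ and, within each $d$, on whether the value $1$ occurs among the $z_j^{\circ}$. Throughout, the key structural observation is that the denominator $B:=\theta+\sum_{\ell}m_\ell z_\ell^{\circ}$ is common to every coordinate equation, so the $d$ distinct block equations $z_i\,B^3=N_i^3$, where $N_i:=(\theta-1)z_i+\sum_\ell m_\ell z_\ell^{\circ}+1$, share a common structure that permits clean subtraction and factorization.

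For $d=1$, write $\mathbf{z}=(z,\ldots,z)$ and expand $z(\theta+(q-1)z)^3=((\theta+q-2)z+1)^3$ into a quartic in $z$. The constant solution $z=1$ is automatically a root (case (A)), and polynomial division by $z-1$ leaves exactly the cubic stated in case (B); matching coefficients is a direct calculation using the expansion $(\theta-1+q-1)^3=(\theta-1)^3+3(\theta-1)^2(q-1)+3(\theta-1)(q-1)^2+(q-1)^3$. For $d=2$, write $\mathbf{z}=z_1^{\circ}\mathbf{e}_{\alpha_1}+z_2^{\circ}\mathbf{e}_{\alpha_2}$. If one value equals $1$, say $z_1^{\circ}=1$ and $z_2^{\circ}=z\neq1$, the first block equation collapses to $1=1$ (numerator equals denominator) and the second becomes a quartic in $z$ with $z=1$ as a spurious root; dividing out $z-1$ yields the cubic of case (C). If neither value is $1$, subtracting the two block equations and invoking $N_1-N_2=(\theta-1)(z_1-z_2)$ together with $a^3-b^3=(a-b)(a^2+ab+b^2)$ produces, after cancelling $z_1-z_2\neq0$, the relation $B^3=(\theta-1)(N_1^2+N_1N_2+N_2^2)$. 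Combined with a single block equation and re-expanded, this eliminates one variable via a linear relation that is solvable provided the non-degeneracy condition $m_1\neq(1-\theta)/3$ (resp.\ $m_2\neq(1-\theta)/3$) holds, yielding the parameterization of $z_1$ in terms of $z_2$ (resp.\ vice versa) and the cubic of case (D).

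For $d=3$, apply the subtraction trick to every pair $(i,j)$ to obtain $B^3=(\theta-1)(N_i^2+N_iN_j+N_j^2)$; equating the expressions coming from pairs $(i,j)$ and $(i,k)$ yields $(N_j-N_k)(N_i+N_j+N_k)=0$, and since $N_j\neq N_k$ whenever $z_j\neq z_k$, one obtains the linear constraint $N_1+N_2+N_3=0$, equivalently $\sum_{i=1}^{3}(\theta-1+3m_i)z_i+3=0$. Specializing $z_3=1$ produces the affine expression of $z_1$ (or $z_2$) appearing in case (E)(i),(ii), and substitution into a single block equation yields the stated cubic after dividing out the spurious root $z=1$. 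The degenerate sub-case (E)(iii) occurs precisely when $\theta=1-q$ and $m_1=m_2=m_3+1$: under these conditions each coefficient $\theta-1+3m_i$ vanishes along with the constant term $3$ (using $m_1+m_2+m_3=q-1$ so $q=3m_3+3$), so the linear constraint is identically satisfied and the two nontrivial block equations reduce, after cancelling a common factor $(m_3+1)^3$, to a symmetric system whose content is precisely the cubic $(z_1+z_2+1)^3=27z_1z_2$, giving the one-parameter family described.

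The main obstacle is bookkeeping rather than conceptual: tracking the spurious root $z=1$ that emerges from every stratum because the constant solution lies in every closure, locating the precise non-degeneracy conditions $m_j\neq(1-\theta)/3$ under which the linear elimination is valid, and carefully isolating the degenerate configuration underlying (E)(iii) where these conditions collapse simultaneously and the system becomes one-parameter. One must also verify at the end that each branch genuinely parameterizes solutions lying in $\mathcal{E}_p^{q-1}$, which follows from the closure of $\mathcal{E}_p$ under the operations in \eqref{equationwrtz} together with Lemma \ref{epproperty}.
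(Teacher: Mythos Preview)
The paper does not itself prove this theorem; it is quoted from \cite{SMAA2015d} without proof, so there is no ``paper's own proof'' to compare against. Your outline is broadly the right one—reduce via Theorems \ref{existenceTIpGM} and \ref{relationdk} to a case analysis on $d\le 3$, and exploit the difference identity $N_i-N_j=(\theta-1)(z_i-z_j)$ to factor $z_iB^3-z_jB^3=N_i^3-N_j^3$—but there is a genuine gap, and it occurs twice.

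The missing ingredient is the systematic use of the \emph{implicit $q$-th spin state}, which carries the value $1$ and has numerator $N_q=(\theta-1)\cdot 1+S+1=\theta+S=B$. In case (D) (two distinct values $z_1,z_2$, neither equal to $1$), subtracting only the two visible block equations yields the quadratic relation $B^3=(\theta-1)(N_1^2+N_1N_2+N_2^2)$; combined with a single block equation this does \emph{not} give a linear relation in $z_1,z_2$ as you claim. One must also subtract each block equation from the tautology $1\cdot B^3=B^3$ for the $q$-th state, obtaining $B^3=(\theta-1)(N_i^2+N_iB+B^2)$ for $i=1,2$. Equating with the first relation gives $(B-N_2)(N_1+N_2+B)=0$, and since $z_2\ne 1$ forces $N_2\ne B$, one concludes $N_1+N_2+B=0$, which is exactly $(\theta-1+3m_1)z_1+(\theta-1+3m_2)z_2+(\theta+2)=0$, the linear relation appearing in (D)(i),(ii).

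The same omission leaves the sub-case $d=3$ with all three values $\ne 1$ unaddressed; you pass directly to ``specializing $z_3=1$'' without justification. Your argument for $d=3$ correctly gives $N_1+N_2+N_3=0$; pairing $z_1,z_2$ with the $q$-th state as above gives $N_1+N_2+B=0$. Subtracting, $N_3=B$, i.e.\ $(\theta-1)(1-z_3)=0$, forcing $z_3=1$—a contradiction. Hence $d=3$ with no value equal to $1$ is impossible, and the list (A)–(E) is exhaustive. A minor correction in (E)(iii): only the coefficients $\theta-1+3m_1$ and $\theta-1+3m_2$ vanish (one computes $\theta-1+3m_3=-3$); the constant term of the linear constraint does cancel, so your conclusion there stands.
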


\section{\texorpdfstring{$H_m$-periodic $p$}{Lg}-adic Gibbs measures}

Let $m$ be some fixed natural number. A boundary function $\mathbf{{\tilde{h}}}:V\to\bq_p^{q}$ is called \textit{$m$-periodic} if $\mathbf{{\tilde{h}}}(x)=\mathbf{\tilde{h}}^{(j)}$ whenever $d(x,x^{(0)})\equiv j \pmod m$  with $0\leq j < m.$  A $p$-adic Gibbs measure associated with an $m$-periodic boundary function  is called  \textit{$H_m$-periodic.} In this case, the condition \eqref{systemofequationforh} takes the form $\mathbf{h}^{(j)}=k\mathbf{F}(\mathbf{h}^{(j+1)})$ for all $0\leq j < m$ with $\mathbf{h}^{(m)}:=\mathbf{h}^{(0)}$, or equivalently
$$
h^{(j)}_i=\ln_p\left(\frac{(\theta-1)\exp_p(h^{(j+1)}_i)+\sum_{l=1}^{q-1}\exp_p(h^{(j+1)}_l)+1}{\theta+ \sum_{l=1}^{q-1}\exp_p(h^{(j+1)}_l)}\right)^k, \quad 1\leq i \leq q-1.
$$ 
 Let $\mathbf{z}^{(j)}=(z^{(j)}_1,\cdots,z^{(j)}_{q-1})\in\mathcal{E}_p^{q-1}$ such that $z^{(j)}_i=\exp_p(h^{(j)}_i)$ for any $1 \leq i \leq q-1.$ In what follows, we write $\mathbf{z}^{(j)}=\exp_p(\mathbf{h}^{(j)}).$ Hence, we have the following result.
 
\begin{proposition}\label{CreteriaHmperiodicpadicGibbsmeasure}
There exists an $H_m$-periodic $p$-adic Gibbs measure if and only if the following system of equations 
 		\begin{equation}\label{equationwrtzperiodic}
 		z^{(j)}_i=\left(\frac{(\theta-1)z^{(j+1)}_i+\sum_{l=1}^{q-1}z^{(j+1)}_l+1}{\theta+ \sum_{l=1}^{q-1}z^{(j+1)}_l}\right)^k, \ \ 1\leq i \leq q-1, \ \ 0\leq j \leq m-1
 		\end{equation}
 		has a solution $\{\mathbf{z}^{(j)}\}_{j=0}^{m-1}\subset\mathcal{E}_p^{q-1}$ in which $\mathbf{z}^{(m)}:=\mathbf{z}^{(0)}.$
 \end{proposition}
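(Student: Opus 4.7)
The plan is to derive the statement directly from Theorem \ref{Equationforh}, by specializing the general compatibility condition to the $m$-periodic setting and then passing from $\mathbf{h}$-coordinates to $\mathbf{z}$-coordinates via the $p$-adic exponential.

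First I would observe that if the boundary function $\mathbf{\tilde{h}}:V\to\bq_p^{q}$ is $m$-periodic with values $\mathbf{\tilde{h}}^{(j)}=(\tilde{h}^{(j)}_1,\dots,\tilde{h}^{(j)}_q)$ when $d(x,x^0)\equiv j\pmod m$, then the associated reduced function $\mathbf{h}:V\to\bq_p^{q-1}$ from Theorem \ref{Equationforh}, defined by $h^{(i)}_x=\tilde{h}^{(i)}_x-\tilde{h}^{(q)}_x$, is automatically $m$-periodic as well, taking values $\mathbf{h}^{(j)}=(h^{(j)}_1,\dots,h^{(j)}_{q-1})$ with $h^{(j)}_i=\tilde{h}^{(j)}_i-\tilde{h}^{(j)}_q$. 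Conversely, any $m$-periodic $\mathbf{h}$ arises from an $m$-periodic $\mathbf{\tilde{h}}$ (e.g.\ setting $\tilde{h}^{(j)}_q=0$, $\tilde{h}^{(j)}_i=h^{(j)}_i$). Hence the existence of an $H_m$-periodic $p$-adic Gibbs measure is equivalent, by Theorem \ref{Equationforh}, to the existence of an $m$-periodic function $\mathbf{h}$ satisfying the fixed point equation \eqref{systemofequationforh} on $V\setminus\{x^0\}$.

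Next I would simplify \eqref{systemofequationforh} under periodicity. Every vertex $x\in V\setminus\{x^0\}$ with $d(x,x^0)\equiv j\pmod m$ has exactly $k$ direct successors, and each such $y\in S(x)$ satisfies $d(y,x^0)=d(x,x^0)+1\equiv j+1\pmod m$, so $\mathbf{h}(y)=\mathbf{h}^{(j+1)}$ for all $y\in S(x)$ (with indices taken mod $m$, i.e.\ $\mathbf{h}^{(m)}:=\mathbf{h}^{(0)}$). The $k$-fold sum collapses and \eqref{systemofequationforh} reduces to the finite system
\begin{equation*}
\mathbf{h}^{(j)}=k\,\mathbf{F}(\mathbf{h}^{(j+1)}),\qquad 0\leq j\leq m-1,\quad \mathbf{h}^{(m)}:=\mathbf{h}^{(0)}.
\end{equation*}
Note that the equation for $j=0$ is captured by vertices at levels $m,2m,\dots$, so no information is lost by excluding $x^0$.

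Finally, I would pass from $\mathbf{h}^{(j)}$ to $\mathbf{z}^{(j)}=\exp_p(\mathbf{h}^{(j)})\in\ce_p^{q-1}$ componentwise. Applying $\exp_p$ to the $i$-th coordinate of the displayed equation and using the identities $\exp_p(kx)=(\exp_p x)^k$ together with $\exp_p(\ln_p(1+u))=1+u$ from Lemma \ref{21}, which is applicable because $F_i(\mathbf{h}^{(j+1)})\in\mathbb{B}_{p^{-1/(p-1)}}(0)$ (this is the standing assumption on the values of $\mathbf{\tilde{h}}$), each scalar equation becomes exactly \eqref{equationwrtzperiodic}. Going the other direction, starting from a solution $\{\mathbf{z}^{(j)}\}\subset\ce_p^{q-1}$ one recovers $\mathbf{h}^{(j)}=\ln_p(\mathbf{z}^{(j)})$ and then the corresponding $\mathbf{\tilde{h}}^{(j)}$, which defines the desired $H_m$-periodic $p$-adic Gibbs measure via the Kolmogorov-type extension already invoked before Theorem \ref{Equationforh}.

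There is no serious obstacle here; the proof is essentially a faithful rewriting of Theorem \ref{Equationforh} under the $m$-periodic ansatz. The only point that needs a brief check is that the $p$-adic $\exp_p$/$\ln_p$ bijection between $\mathbb{B}_{p^{-1/(p-1)}}(0)$ and $\ce_p$ allows one to convert the system in $\mathbf{h}$ to the system in $\mathbf{z}$ in both directions without loss, and that the closure condition $\mathbf{h}^{(m)}=\mathbf{h}^{(0)}$ translates into $\mathbf{z}^{(m)}=\mathbf{z}^{(0)}$; both are immediate from Lemma \ref{21}.
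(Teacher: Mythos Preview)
Your proposal is correct and follows essentially the same approach as the paper: the paper's ``proof'' is the short derivation immediately preceding the proposition, which specializes Theorem~\ref{Equationforh} to the $m$-periodic ansatz to obtain $\mathbf{h}^{(j)}=k\mathbf{F}(\mathbf{h}^{(j+1)})$ and then substitutes $z^{(j)}_i=\exp_p(h^{(j)}_i)$ to reach~\eqref{equationwrtzperiodic}. Your write-up is simply a more detailed and careful version of that same argument, with the $\exp_p/\ln_p$ bijection via Lemma~\ref{21} made explicit.
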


We are looking for solutions of the form $\mathbf{z}^{(j)}=z^{(j)}\mathbf{e}_{\alpha}+\mathbf{e}_{\beta}$ with $|\alpha|+|\beta|=q-1$ of the system of equations \eqref{equationwrtzperiodic}. In this case, the system of equations \eqref{equationwrtzperiodic} takes the following form
\begin{eqnarray}
z^{(j)}=\left(\frac{(\theta+|\alpha|-1)z^{(j+1)}+q-|\alpha|}{|\alpha|z^{(j+1)}+\theta+ q-|\alpha|-1}\right)^k, \quad 0\leq j \leq m-1
\end{eqnarray}
with $z^{(m)}:=z^{(0)}.$ This means that $\{z^{(j)}\}_{j=0}^{m-1}$ is an $m$-cycle of \textit{the Potts--Bethe mapping} so-called
\begin{eqnarray}\label{Gthetaalphaqk}
G_{\theta,\alpha,q,k}(z)=\left(\frac{(\theta+|\alpha|-1)z+q-|\alpha|}{|\alpha|z+\theta+ q-|\alpha|-1}\right)^k.
\end{eqnarray}

If $\theta=1$ or $\theta=1-q$ then the Potts--Bethe mapping becomes a constant function. Throughout this paper, we suppose that $0<|\theta-1|_p<|q|_p<1$. This is one of the assumptions to ensure the non-unique translation-invariant $p$-adic Gibbs measures. We may assume that $1\leq |\alpha| \leq q-1$ otherwise if $|\alpha|=0$ then we obtain a trivial solution $\mathbf{z}^{(j)}=(1,1,\cdots, 1)$ for all $0\leq j \leq m-1$ of the system of equations \eqref{equationwrtzperiodic}. Thus, we have
\begin{eqnarray*}\label{Gthetaqk}
G_{\theta,\alpha,q,k}(z)=\left(\frac{\left(\frac{\theta-1}{|\alpha|}+1\right)z+\frac{q}{|\alpha|}-1}{z+\frac{\theta-1+ q}{|\alpha|}-1}\right)^k.
\end{eqnarray*}
If we set $\Theta=\frac{\theta-1}{|\alpha|}+1$ and $\mathsf{Q}=\frac{q}{|\alpha|},$ then we obtain
\begin{eqnarray*}\label{GThetaQk}
	f_{\Theta,\mathsf{Q},k}(z)=\left(\frac{\Theta z+\mathsf{Q}-1}{ z+\Theta+\mathsf{Q}-2}\right)^k.
\end{eqnarray*}
It is obvious that the conditions $\theta\neq 1$ and $\theta\neq 1-q$ are equivalent to the conditions $\Theta\neq 1$ and $\Theta\neq 1-\mathsf{Q}.$ Moreover, the condition $|\theta-1|_p<|q|_p$ (resp. $|\theta-1|_p=|q|_p$ or $|\theta-1|_p>|q|_p$)  is equivalent to the condition $|\Theta-1|_p<|\mathsf{Q}|_p$ (resp. $|\Theta-1|_p=|\mathsf{Q}|_p$ or $|\Theta-1|_p>|\mathsf{Q}|_p$). Therefore, without loss of generality, we assume that $|\alpha|=1$ and $q\in\mathbb{Q}_p.$ We are now ready to formulate the main results of this paper.

Let $f_{\theta, q, k} : \mathbb{Q}_p \to \mathbb{Q}_p$ be the Potts--Bethe mapping defined as
\begin{equation}\label{PottsBethe}
f_{\theta, q, k}(x)=\left(\frac{\theta x+q-1}{x+\theta+q-2}\right)^k
\end{equation}
where $\theta\in\mathcal{E}_p,$ $q\in\mathbb{Q}_p$ such that $0<|\theta-1|_p<|q|_p<1$. 

Denote by $\textup{\textbf{Dom}}\{f\}$ the domain of a mapping $f: \mathbb{Q}_p \to \mathbb{Q}_p$ and by $\mathbf{Fix}\{f\}$ the set of its fixed points. Then $\textup{\textbf{Dom}}\{f_{\theta,q,k}\}=\mathbb{Q}_p\setminus \{\mathbf{x}^{(\infty)}\}$ where $\mathbf{x}^{(\infty)}=2-\theta-q=\mathbf{x}^{(0)}-q-(\theta-1)$ and $\mathbf{x}^{(0)}=1\in \mathbf{Fix}\{f_{\theta, q, k}\}$ is always a fixed point for any $k\in\mathbb{N}$.

The dynamics of the Potts--Bethe mapping \eqref{PottsBethe} for the cases $k=1$ with $\theta, q\in\mathbb{Q}_p$ and $k=2$ with $0<|\theta-1|_p<|q|^2_p<1$ were studied in \cite{Liao2} and \cite{FMOK2016}, respectively. Here, we study its dynamics in the case $k=3$ with $p\equiv 1 \ (\rm{mod} \ 3)$ and $p=2,3$. The case $k=3$ with $p \geq 5,\ p\equiv 2 \ (\rm{mod} \ 3)$ has been studied in \cite{SMAA2016Submitted}. 

Let us consider the following cubic equation
\begin{equation}\label{cubiceqwrty}
y^3-(1+\theta+\theta^2)y^2-(2\theta+1)(1-\theta-q)y-(1-\theta-q)^2=0.
\end{equation}

This cubic equation always has three roots $\mathbf{y}^{(1)},\mathbf{y}^{(2)},\mathbf{y}^{(3)}\in\mathbb{Z}_p$ (see Proposition \ref{structureroot}). Let  $\mathbf{x}^{(0)}=1,\ \mathbf{x}^{(i)}=\mathbf{x}^{(0)}-q+(\theta-1)(\mathbf{y}^{(i)}-1)$ for $1\leq i\leq3$. We introduce the following sets, 
\begin{eqnarray*}
\mathcal{A}_0 &:=& \left\{ x \in \mathbb{Q}_p : \left|x-\mathbf{x}^{(0)}\right|_p < |q|_p \right\},\\
\mathcal{C}_{1} &:=& \left\{ x \in \mathbb{Q}_p : \left|x-\mathbf{x}^{(1)}\right|_p < \left|x-\mathbf{x}^{(\infty)}\right|_p = |\theta-1|_p \right\}, \\
\mathcal{C}_{2} &:=& \left\{ x \in \mathbb{Q}_p : \left|x-\mathbf{x}^{(2)}\right|_p < \left|x-\mathbf{x}^{(\infty)}\right|_p = |q|_p|\theta-1|_p\right\},\\
\mathcal{C}_{3} &:=& \left\{ x \in \mathbb{Q}_p : \left|x-\mathbf{x}^{(3)}\right|_p < \left|x-\mathbf{x}^{(\infty)}\right|_p = |q|_p|\theta-1|_p\right\}.
\end{eqnarray*}
The attracting basin of the fixed point $\mathbf{x}^{(0)}$ is defined as
$$\mathfrak{B}(\mathbf{x}^{(0)}):=\left\{ x \in \mathbb{Q}_p : \lim_{n \to +\infty}f^{n}_{\theta, q, 3}(x)=\mathbf{x}^{(0)}\right\}$$
where $f_{\theta, q, 3}^{n}(x)$ for $n\in\mathbb{N}$ is the $n$-th iteration of $f_{\theta, q, 3}$. The following theorem is our main theorem. 

\begin{theorem}\label{PottsBetheMappingDynamics}
Let $k=3,\ p \equiv 1 \ (\rm{mod} \ 3),\ $ and $ 0 <|\theta-1|_p<|q|_p<1$.  Then the following statements hold.
\begin{itemize}
	\item[(i)] One has $\mathbf{Fix}\{f_{\theta, q, 3}\}=\{\mathbf{x}^{(0)}, \ \mathbf{x}^{(1)},\ \mathbf{x}^{(2)},\ \mathbf{x}^{(3)}\}$ in which $\mathbf{x}^{(0)}$ is attracting  and $\mathbf{x}^{(1)},\ \mathbf{x}^{(2)}$ and $\mathbf{x}^{(3)}$ are repelling;
    \item[(ii)] One has $$\textup{\textbf{Dom}}\{f_{\theta,q,3}\}\setminus \left(\mathcal{C}_1 \cup \mathcal{C}_{2}\cup \mathcal{C}_{3}\right) \subset \mathfrak{B}\left( \mathbf{x}^{(0)} \right)=\bigcup\limits^{+\infty}_{n = 0}f^{-n}_{\theta, q, 3}\left(\mathcal{A}_0\right);$$
    \item[(iii)] There exists $\mathcal{J}\subset \mathcal{C}_1 \cup \mathcal{C}_{2}\cup \mathcal{C}_{3}$ such that  the dynamical system $f_{\theta, q, 3}:\mathcal{J}\to\mathcal{J}$ is topologically conjugate to the full shift dynamics $(\Sigma_3,\sigma)$ on three symbols;
    \item[(iv)] One has $$\mathbb{Q}_p=\mathfrak{B}\left( \mathbf{x}^{(0)} \right)\cup \mathcal{J}\cup \bigcup\limits_{n=0}^{+\infty}f_{\theta, q, 3}^{-n}\{\mathbf{x}^{(\infty)}\}.$$
\end{itemize}
\end{theorem}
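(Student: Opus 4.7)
The plan is to prove the four parts in order, with the Möbius factor $g(x) = (\theta x + q - 1)/(x + \theta + q - 2)$, for which $f_{\theta,q,3} = g^3$, serving as the central computational object. Throughout, the hypothesis $p \equiv 1 \pmod 3$ enters through two features: $|3|_p = 1$, and the existence of primitive cube roots of unity in $\mathbb{Q}_p$.

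For part (i), I begin with the fixed-point equation $(\theta x + q - 1)^3 = x(x + \theta + q - 2)^3$. The root $\mathbf{x}^{(0)} = 1$ is evident; dividing out $(x - 1)$ and then making the substitution $x = 1 - q + (\theta - 1)(y - 1)$ converts the remaining cubic into \eqref{cubiceqwrty}. A Newton-polygon / Hensel-type analysis, exploiting the known $p$-adic sizes $|\theta - 1|_p$ and $|1 - \theta - q|_p = |q|_p$ of the coefficients and using $p \equiv 1 \pmod 3$ to split off the three cube-root branches, then produces three distinct roots $\mathbf{y}^{(1)},\mathbf{y}^{(2)},\mathbf{y}^{(3)} \in \mathbb{Z}_p$ with controlled norms. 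Classification is by direct computation of $f'(x) = 3\, g(x)^2 g'(x)$ with $g'(x) = (\theta - 1)(\theta + q - 1)/(x + \theta + q - 2)^2$. At $\mathbf{x}^{(0)} = 1$, $g(1) = 1$ gives $|f'(1)|_p = |\theta - 1|_p/|q|_p < 1$. At each $\mathbf{x}^{(i)}$ the norm of the denominator in $g'$ can be read off from the location of $\mathbf{x}^{(i)}$ relative to $\mathbf{x}^{(\infty)}$, and a short computation yields $|f'(\mathbf{x}^{(i)})|_p > 1$.

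For part (ii), the key identity is $g(x) - 1 = (\theta - 1)(x - 1)/\bigl((x - 1) + (\theta + q - 1)\bigr)$ together with the factorization $g(x)^3 - 1 = (g(x) - 1)(g(x)^2 + g(x) + 1)$, where $|g(x)^2 + g(x) + 1|_p = |3|_p = 1$ whenever $g(x)$ is close to $1$. A case analysis on the sphere to which $x$ belongs then gives the claim: on $\mathcal{A}_0$ one reads off $|f(x) - 1|_p = (|\theta - 1|_p/|q|_p)|x - 1|_p$, a strict contraction toward $\mathbf{x}^{(0)}$; on any sphere $\mathbb{S}_r(1)$ with $r > |q|_p$ the denominator has norm $r$ and one obtains $|f(x) - 1|_p = |\theta - 1|_p < |q|_p$, so $f(x) \in \mathcal{A}_0$ in a single step; and on the critical sphere $\mathbb{S}_{|q|_p}(1)$ a finer decomposition based on the position of $x$ relative to $\mathbf{x}^{(\infty)}$ and the repelling fixed points $\mathbf{x}^{(i)}$ shows that the only points whose forward orbits avoid $\mathcal{A}_0$ lie in $\mathcal{C}_1 \cup \mathcal{C}_2 \cup \mathcal{C}_3$.

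For part (iii), set $X = \mathcal{C}_1 \cup \mathcal{C}_2 \cup \mathcal{C}_3$ and $\mathcal{J} = \bigcap_{n \geq 0} f_{\theta,q,3}^{-n}(X)$. The plan is to invoke Theorem \ref{conjugacytheorem}: first verify that $f_{\theta,q,3}$ is locally scaling on each $\mathcal{C}_i$ in the sense of Definition \ref{def:locally-scaling}, with scaling constants strictly greater than $1$ (so $X$ is a $p$-adic repeller); then determine the images $f_{\theta,q,3}(\mathcal{C}_i)$ to read off the incidence matrix. Since $\mathcal{C}_1$ has radius $|\theta - 1|_p$ while $\mathcal{C}_2,\mathcal{C}_3$ have the smaller radius $|q|_p|\theta - 1|_p$, applying the theorem with balls of a common radius requires subdividing $\mathcal{C}_1$ into those sub-balls of radius $|q|_p|\theta - 1|_p$ that actually participate in the symbolic dynamics, yielding an index set of $r$ symbols (with $r = 3$ in the regime $|\theta - 1|_p < |q|_p^2$ and $r \geq 4$ otherwise). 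Once irreducibility of the incidence matrix is established, Theorem \ref{conjugacytheorem} produces an isometric conjugacy to a transitive subshift of finite type, which is then identified with the full three-shift via a standard block-coding / factor-map argument.

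Part (iv) is a formal consequence of (ii) and (iii): any $x \in \mathbb{Q}_p$ that is not eventually mapped to the singular point $\mathbf{x}^{(\infty)}$ either remains in $X$ under all forward iterates (hence lies in $\mathcal{J}$) or eventually leaves $X$, at which moment part (ii) forces its orbit into $\mathcal{A}_0 \subset \mathfrak{B}(\mathbf{x}^{(0)})$. The principal technical obstacle is the sphere-by-sphere analysis on $\mathbb{S}_{|q|_p}(1)$, where the dynamics is most intricate: it is here that the three repelling fixed points and the singular point all coexist, and the identification of the correct incidence structure -- together with, in the borderline regime $|q|_p^2 \leq |\theta - 1|_p$, the verification that the resulting non-trivial subshift of finite type is topologically conjugate to $(\Sigma_3,\sigma)$ -- requires the most delicate book-keeping.
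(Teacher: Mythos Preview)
Your proposal is correct and follows essentially the same route as the paper: the substitution $x = 1 - q + (\theta-1)(y-1)$ reducing to the cubic \eqref{cubiceqwrty}, the derivative computation via $f' = 3g^2g'$, the sphere-by-sphere case analysis around $\mathbf{x}^{(\infty)}$ for the basin, the local-scaling estimate on each $\mathcal{C}_i$ feeding into Theorem~\ref{conjugacytheorem}, and the split into the two regimes $|\theta-1|_p<|q|_p^2$ (three symbols directly) versus $|q|_p^2\le|\theta-1|_p<|q|_p$ (a larger subshift recoded to $\Sigma_3$) all match the paper's argument. The only place where the paper is more explicit than your outline is the second regime: there the relevant sub-balls of $\mathcal{C}_1$ are organised as a chain $\mathcal{D}_1=\mathbb{B}_r(\mathbf{x}^{(1)}),\mathcal{H}_m,\dots,\mathcal{H}_1$ lying on successive spheres $|x-\mathbf{x}^{(1)}|_p=|\theta-1|_p^{l+1}/|q|_p^{l}$, each mapped bijectively to the next, which makes the incidence matrix and the block map to $\Sigma_3$ completely explicit.
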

The proof of Theorem \ref{PottsBetheMappingDynamics} will be given in Section 4. As an application of Theorem \ref{PottsBetheMappingDynamics}, we have the following theorem which shows the vastness of periodic $p$-adic Gibbs measures.

\begin{theorem}\label{HmperiodicpadicGibbsMeasure}
Let $k=3,\ p \equiv 1 \ (\rm{mod} \ 3)$. Suppose $0 < |\theta-1|_p < |q|_p<1$. Write $q=\frac{q^{*}}{|q|_p}$ with $q^{*}\centernot\mid p.$  Then there exist $H_m$-periodic $p$-adic Gibbs measures associated with the $m$-periodic boundary functions $\{\mathbf{\tilde{h}}^{(j)}\}_{j=0}^{m-1}$ where $$\mathbf{\tilde{h}}^{(j)}=(\ln_p(hz^{(j)}_1),\cdots, \ln_p(hz^{(j)}_{q-1}), \log_p(h))$$ with $h\in\mathcal{E}_p,\ \mathbf{z}^{(j)}=(z^{(j)}_1,\cdots z^{(j)}_{q-1})\in \mathcal{E}_p^{q-1}$ such that $\mathbf{z}^{(j)}=z^{(j)}\mathbf{e}_{\alpha}+\mathbf{e}_{\beta}, |\alpha|+|\beta|=q-1,$ $|\alpha|\not\in\{ \frac{1}{|q|_p}, \frac{2}{|q|_p},\cdots, \frac{q^{*}-1}{|q|_p}\} $ and $\{z^{(j)}\}_{j=0}^{m-1}$ is an $m$-cycle of the Potts--Bethe mapping
\begin{eqnarray}\label{Gthetaalphaq3}
G_{\theta,\alpha,q,3}(z)=\left(\frac{(\theta+|\alpha|-1)z+q-|\alpha|}{|\alpha|z+\theta+ q-|\alpha|-1}\right)^3.
\end{eqnarray}
\end{theorem}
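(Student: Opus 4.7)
The plan is to invoke Proposition \ref{CreteriaHmperiodicpadicGibbsmeasure}, which reduces the existence of the prescribed $H_m$-periodic $p$-adic Gibbs measures to the existence of $m$-cycles of the Potts--Bethe mapping $G_{\theta,\alpha,q,3}$ from \eqref{Gthetaalphaq3} lying in $\mathcal{E}_p$. Because the substitutions $\Theta = \frac{\theta-1}{|\alpha|}+1$ and $\mathsf{Q} = \frac{q}{|\alpha|}$ already performed in the text turn $G_{\theta,\alpha,q,3}$ into the normalized Potts--Bethe mapping $f_{\Theta,\mathsf{Q},3}$ of \eqref{PottsBethe}, it suffices to produce, for every $m\geq 1$, an $m$-cycle of $f_{\Theta,\mathsf{Q},3}$ sitting inside $\mathcal{E}_p$, provided the admissibility constraint on $|\alpha|$ in the statement is imposed.

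First I would verify that the hypothesis of Theorem \ref{PottsBetheMappingDynamics}, namely $0 < |\Theta-1|_p < |\mathsf{Q}|_p < 1$, is equivalent to the excluded list for $|\alpha|$. A direct computation of $p$-adic valuations gives $|\Theta-1|_p = p^{ord_p(|\alpha|)}|\theta-1|_p$ and $|\mathsf{Q}|_p = p^{ord_p(|\alpha|)}|q|_p$. The chain $0 < |\Theta-1|_p < |\mathsf{Q}|_p$ is then automatic from $0 < |\theta-1|_p < |q|_p$, while $|\mathsf{Q}|_p < 1$ amounts to $ord_p(|\alpha|) < ord_p(q)$. Writing $q=q^{*}/|q|_p$ with $p\nmid q^{*}$ gives $1/|q|_p = p^{ord_p(q)}$, so within the admissible range $1\leq |\alpha|\leq q-1$ the condition $ord_p(|\alpha|) < ord_p(q)$ is exactly the exclusion of $|\alpha|$ from the multiples $\{1/|q|_p,\, 2/|q|_p,\,\ldots,\,(q^{*}-1)/|q|_p\}$ of $1/|q|_p$, matching the statement.

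Next, Theorem \ref{PottsBetheMappingDynamics}(iii) supplies a compact invariant set $\mathcal{J}\subset \mathcal{C}_1\cup\mathcal{C}_2\cup\mathcal{C}_3$ on which $f_{\Theta,\mathsf{Q},3}$ is topologically conjugate to the full shift $(\Sigma_3,\sigma)$. Since $\Sigma_3$ carries periodic orbits of every period $m\geq 1$ (any strictly aperiodic word of length $m$ supplies one), the conjugacy transports these to $m$-cycles $\{z^{(j)}\}_{j=0}^{m-1}\subset \mathcal{J}$ of $f_{\Theta,\mathsf{Q},3}$, hence of $G_{\theta,\alpha,q,3}$. I would then confirm $z^{(j)}\in\mathcal{E}_p$, which is required to form the boundary function $\mathbf{\tilde{h}}^{(j)}$ via $p$-adic logarithms: the assumption $p\equiv 1\pmod 3$ forces $p\geq 7$, so $\mathcal{E}_p = \mathbb{B}_1(1)$; the centers satisfy $|\mathbf{x}^{(i)}-1|_p\leq |q|_p$ and the radii of $\mathcal{C}_1,\mathcal{C}_2,\mathcal{C}_3$ are all bounded above by $|\theta-1|_p<|q|_p<1$, so by the ultrametric inequality $\mathcal{J}\subset \mathbb{B}_1(1) = \mathcal{E}_p$. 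Feeding each such cycle through Proposition \ref{CreteriaHmperiodicpadicGibbsmeasure} with the ansatz $\mathbf{z}^{(j)} = z^{(j)}\mathbf{e}_\alpha+\mathbf{e}_\beta$ yields the asserted Gibbs measure.

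The only step requiring genuine work is Theorem \ref{PottsBetheMappingDynamics}(iii), which is the dynamical input and whose proof lives in the next section. Once that is in hand, the remainder of the argument for Theorem \ref{HmperiodicpadicGibbsMeasure} is a bookkeeping exercise: matching the normalization constants $\Theta,\mathsf{Q}$ to the original parameters $\theta,q,|\alpha|$, identifying the exclusion set for $|\alpha|$, and checking that Julia-set points automatically live in the multiplicative group $\mathcal{E}_p$ so that the Gibbs measure parametrization via $\mathbf{\tilde{h}}^{(j)}=(\ln_p(hz_1^{(j)}),\ldots,\ln_p(hz_{q-1}^{(j)}),\ln_p(h))$ is legitimate.
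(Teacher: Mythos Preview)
Your proposal is correct and follows essentially the same route as the paper's proof: reduce via Proposition~\ref{CreteriaHmperiodicpadicGibbsmeasure} to finding $m$-cycles of $G_{\theta,\alpha,q,3}$ in $\mathcal{E}_p$, normalize to $f_{\Theta,\mathsf{Q},3}$, verify that the exclusion on $|\alpha|$ is equivalent to $|\mathsf{Q}|_p<1$, and then invoke Theorem~\ref{PottsBetheMappingDynamics}(iii). You spell out two points the paper leaves implicit---that the full shift on three symbols has periodic orbits of every period, and the ultrametric verification that $\mathcal{C}_1\cup\mathcal{C}_2\cup\mathcal{C}_3\subset\mathcal{E}_p$---but these are minor elaborations rather than a different argument.
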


\begin{proof}
As we have already mentioned (see Proposition \ref{CreteriaHmperiodicpadicGibbsmeasure}) that there exists an $H_m$-periodic $p$-adic Gibbs measure associated with an $m$-periodic boundary  function $\{\mathbf{\tilde{h}}^{(j)}\}_{j=0}^{m-1},$ $\mathbf{\tilde{h}}^{(j)}=(\ln_p(hz^{(j)}_1),\cdots, \ln_p(hz^{(j)}_{q-1}), \ln_p(h))$ with $h\in\mathcal{E}_p,\ \mathbf{z}^{(j)}=(z^{(j)}_1,\cdots z^{(j)}_{q-1})\in \mathcal{E}_p^{q-1}$ of the form $\mathbf{z}^{(j)}=z^{(j)}\mathbf{e}_{\alpha}+\mathbf{e}_{\beta}, \ |\alpha|+|\beta|=q-1$ if and only if $\{z^{(j)}\}_{j=0}^{m-1}\subset \mathcal{E}_p$ is an $m$-cycle of the Potts--Bethe mapping \eqref{Gthetaalphaq3} or equivalently an $m$-cycle of the following mapping
	\begin{eqnarray}\label{fThetaQk}
		f_{\Theta,\mathsf{Q},3}(z)=\left(\frac{\Theta z+\mathsf{Q}-1}{z+\Theta+\mathsf{Q}-2}\right)^3
	\end{eqnarray}
where $\Theta:=\frac{\theta-1}{|\alpha|}+1$ and $\mathsf{Q}:=\frac{q}{|\alpha|}.$ Due to Theorem \ref{PottsBetheMappingDynamics} $(iii)$, if $0<|\Theta-1|_p<|\mathsf{Q}|_p<1$	then the Potts--Bethe mapping \eqref{fThetaQk} has an $m$-cycle $\{z^{(j)}\}_{j=0}^{m-1}\subset \mathcal{C}_1 \cup \mathcal{C}_{2}\cup \mathcal{C}_{3} \subset \mathcal{E}_p.$ 

We now describe all possible sets $\alpha\subset\mathbf{I}_{q-1}$ for which one has that $0<|\Theta-1|_p<|\mathsf{Q}|_p<1.$ Let $q=p^l \cdot q^{*}=\frac{q^{*}}{|q|_p}$ such that $q^{*}\centernot\mid p, \ l\in\mathbb{N}.$ One has $|\mathsf{Q}|_p=|\frac{q}{|\alpha|}|_p<1$ if and only if $|\alpha|\not\in \{p^l, 2p^l,\cdots, (q^{*}-1)p^l \}=\{ \frac{1}{|q|_p}, \frac{2}{|q|_p},\cdots, \frac{q^{*}-1}{|q|_p}\}.$ Moreover, since $|\theta-1|_p<|q|_p$ and $|\alpha|\in\mathbb{N},$ we get $|\Theta-1|_p<|\mathsf{Q}|_p.$
\end{proof}

\begin{remark}
Let $\mathcal{HGM}(m,q:3)$ be the set of $H_m$-periodic $p$-adic Gibbs measures of the $q$-states Potts model on the Cayley tree of order three. It follows from Theorem \ref{HmperiodicpadicGibbsMeasure} that $$|\mathcal{HGM}(m,q:3)|\geq \left(2^{q-1}-\sum\limits_{i=1}^{q^{*}-1}{q-1\choose{|q|^{-1}_pi}}\right)\cdot 3^m$$ where $q=\frac{q^{*}}{|q|_p}$ with $q^{*}\centernot\mid p.$
\end{remark}\medskip
	
\section{Dynamics of the Potts--Bethe mapping for \texorpdfstring{$k=3$ and $p\equiv 1 \ (\rm{mod} \ 3)$}{Lg}} \label{section4}

We investigate the dynamics of the Potts--Bethe mapping $f_{\theta, q, 3} : \textup{\textbf{Dom}}\{f_{\theta,q,3}\} \to \mathbb{Q}_p$
\begin{equation}\label{IsingPottsk=3}
f_{\theta, q, 3}(x)=\left(\frac{\theta x+q-1}{x+\theta+q-2}\right)^3
\end{equation}
where $\textup{\textbf{Dom}}\{f_{\theta,q,3}\}=\mathbb{Q}_p\setminus \{\mathbf{x}^{(\infty)}\}$ and $\mathbf{x}^{(\infty)}=2-\theta-q$. 

Throughout this section, we assume that $0<|\theta-1|_p<|q|_p<1$ and $p\equiv 1 \ (\rm{mod} \ 3)$. 

\subsection{Fixed points}
It is clear that $\mathbf{x}^{(0)}=1 \in \mathbf{Fix}\{f_{\theta, q, 3}\}$. Moreover, it follows from $f_{\theta, q, 3}(x)-1=x-1$ that
\begin{multline*}
(x-1)(\theta-1)\frac{(\theta x+q-1)^2+(\theta x+q-1)(x+\theta+q-2)+(x+\theta+q-2)^2}{(x+\theta+q-2)^3}\\
=(x-1).	
\end{multline*}
Therefore, any other fixed point $x \neq \mathbf{x}^{(0)}$ is a root of the following cubic equation
\begin{equation}\label{wrtx}
(\theta-1)(\theta x+q-1)\left((\theta+1)x+\theta+2q-3\right)= (x+q-1)(x+\theta+q-2)^2.
\end{equation}

We introduce a new variable $y:=\frac{x-1+q}{\theta-1}+1$. Then the cubic equation \eqref{wrtx} can be written with respect to $y$ as follows
\begin{equation}\label{wrty}
y^3-(1+\theta+\theta^2)y^2-(2\theta+1)(1-\theta-q)y-(1-\theta-q)^2=0.
\end{equation}

Let us find all possible roots of the cubic equation \eqref{wrty}.

\begin{proposition}\label{structureroot}
Write $q=p^rq^*$ for some $r\in\mathbb{N}$ with $q^*=q_0+q_1p+\dots\in\mathbb{Z}^*_p.$ Then the cubic equation \eqref{wrty} always has three roots $\mathbf{y}^{(1)},\mathbf{y}^{(2)},\mathbf{y}^{(3)}$ such that
\begin{itemize}
	\item[(i)] $|\mathbf{y}^{(1)}|_p=1>|1-\theta-q|_p=|\mathbf{y}^{(2)}|_p=|\mathbf{y}^{(3)}|_p;$
	\item[(ii)] $\mathbf{y}^{(1)}=3+(p-q_0)p^r+\cdots$ or equivalently $|\mathbf{y}^{(1)}-3+q|_p<|\mathbf{y}^{(1)}-3|_p=|q|_p;$
	\item[(iii)] $\mathbf{y}^{(2)} = \frac{1}{\left|q\right|_p} \left(y^{(2)}_0+y^{(2)}_1p+\dots\right), \	\mathbf{y}^{(3)} = \frac{1}{\left|q\right|_p} \left(y^{(3)}_0+y^{(3)}_1p+\dots\right)$
	where $y^{(2)}_0$ and $y^{(3)}_0$ are roots of the following congruent equation
	\begin{equation}\label{congruenty2y3}
	3t^2+3(1-\theta-q)^*t+\left((1-\theta-q)^*\right)^2 \equiv 0 \ (\rm{mod} \ p).
	\end{equation}
\end{itemize}
\end{proposition}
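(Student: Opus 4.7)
The plan is to localize each root via the Newton polygon and then sharpen each approximation by Hensel's lemma.

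\textbf{Newton polygon, proving (i).} Using $\theta\in\mathcal{E}_p$ and $|\theta-1|_p<|q|_p$, the $p$-adic valuations of the coefficients of the cubic at degrees $3,\,2,\,1,\,0$ are respectively $0,\,0,\,r,\,2r$; indeed $|1+\theta+\theta^2|_p=|2\theta+1|_p=|3|_p=1$ (since $p\neq 3$), and $|1-\theta-q|_p=|q|_p=p^{-r}$. The lower convex hull of the associated Newton polygon has two edges: one of slope $-r$ and horizontal length $2$, and one of slope $0$ and length $1$. Hence the cubic has exactly one unit root $\mathbf{y}^{(1)}\in\mathbb{Z}_p^{*}$ and exactly two roots $\mathbf{y}^{(2)},\mathbf{y}^{(3)}$ of $p$-adic valuation $r$, proving (i).

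\textbf{Refining the unit root, proving (ii).} Reducing the cubic modulo $p$ and using $\theta\equiv 1$, $q\equiv 0$ gives $y^2(y-3)\equiv 0\pmod{p}$, so the unit root satisfies $\mathbf{y}^{(1)}\equiv 3\pmod p$; since $p\neq 3$, this is a simple root of the reduction and Hensel's lemma guarantees a unique lift. To sharpen, write $y=3+w$ and expand the cubic. All contributions to the resulting equation other than the two terms $9w$ and $9q$ lie in $p^{r+1}\mathbb{Z}_p$: for the constants this uses $|\theta-1|_p<|q|_p$ and $r\geq 1$, while for the higher-degree contributions in $w$ it uses $|w|_p=|q|_p=p^{-r}$ together with $r\geq 1$. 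Consequently $9(w+q)\in p^{r+1}\mathbb{Z}_p$, and since $9$ is a $p$-adic unit, $w\equiv -q\pmod{p^{r+1}\mathbb{Z}_p}$, i.e., $|w+q|_p<|q|_p=|w|_p$, which is (ii).

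\textbf{The two small roots, proving (iii).} Substituting $y=p^r v$ and dividing by $p^{2r}$ transforms the cubic into
\begin{equation*}
p^r v^3 - (1+\theta+\theta^2)v^2 - (2\theta+1)(1-\theta-q)^{*} v - \bigl((1-\theta-q)^{*}\bigr)^2 = 0,
\end{equation*}
where $(1-\theta-q)^{*}:=(1-\theta-q)/p^r\in\mathbb{Z}_p^{*}$. Reducing modulo $p$ (the $v^3$-term vanishes since $r\geq 1$, and $\theta\equiv 1$) produces exactly the congruence \eqref{congruenty2y3}, whose discriminant equals $-3\bigl((1-\theta-q)^{*}\bigr)^2$. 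The hypothesis $p\equiv 1\pmod{3}$ guarantees that $-3$ is a quadratic residue modulo $p$, so the quadratic congruence has two distinct roots $y_0^{(2)},y_0^{(3)}\in\mathbb{F}_p^{*}$. The derivative $-6v-3(1-\theta-q)^{*}$ of the reduced polynomial is nonzero at either root (via the quadratic formula it evaluates to $\mp\sqrt{-3}\,(1-\theta-q)^{*}$, nonzero since $p\neq 3$), so Hensel's lemma lifts them to unique units $v_2,v_3\in\mathbb{Z}_p^{*}$, and $\mathbf{y}^{(i)}=p^r v_i$ for $i=2,3$ completes (iii).

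The principal subtlety is in the last step: the substitution $y=p^r v$ yields a cubic in $v$ whose leading coefficient vanishes modulo $p$, so its $p$-reduction is only a quadratic; one must therefore explain why the two Hensel lifts exhaust the two valuation-$r$ roots predicted by the Newton polygon. This is precisely where the hypothesis $p\equiv 1\pmod{3}$ is indispensable, via the square-class of $-3$ modulo $p$.
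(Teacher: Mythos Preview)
Your proof is correct and follows essentially the same route as the paper's. The only cosmetic difference is that for part (i) you invoke the Newton polygon directly to read off the valuations of the three roots, whereas the paper appeals to an external result (Theorem~5.1 of \cite{SMAA2015d}) encoding the same information; for parts (ii) and (iii) your substitutions $y=3+w$ and $y=p^r v$ and the subsequent Hensel arguments match the paper's almost line for line, with your treatment of (ii) being slightly more streamlined.
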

\begin{remark}
	The congruent equation \eqref{congruenty2y3} has discriminant $D=-3\left((1-\theta-q)^{*}\right)^2$. It is solvable if and only $D$ (or equivalently $-3$) is a quadratic residue modulo $p$. $-3$ is a quadratic residue modulo $p$ if and only if $p \equiv 1 \ (\rm{mod} \ 3)$.
\end{remark}
\begin{proof}
We study the solvability of the cubic equation \eqref{wrty} over $\mathbb{Q}_p$. We first show that if the cubic equation \eqref{wrty} has any root in the $p$-adic field $\mathbb{Q}_p$ then it must lie in the set $\mathbb{Z}_p.$ Suppose the contrary, i.e. the cubic equation \eqref{wrty} has a root $y$ such that $|y|_p>1.$ Since $p\equiv 1 \ (mod \ 3)$ and $y^3=(1+\theta+\theta^2)y^2+(2\theta+1)(1-\theta-q)y+(1-\theta-q)^2,$ then 
$$|y|^3_p=|(1+\theta+\theta^2)y^2+(2\theta+1)(1-\theta-q)y+(1-\theta-q)^2|_p=|y|^2_p.$$ It is a contradiction. Therefore, any root of the cubic equation \eqref{wrty} must lie in the set $\mathbb{Z}_p.$ We refer to \cite{SMAA2015d} for the detailed study of the general cubic equation over $\mathbb{Z}_p$.

Let $a=-(1+\theta+\theta^2),\ b=-(2\theta+1)(1-\theta-q)$ and $c=-(1-\theta-q)^2$. One can verify that $|b|_p^2=|c|_p=|1-\theta-q|_p^2<1=|a|_p$ and $|b|_p<|a|_p^2,\ |c|_p<|a|_p^3$. Thus, by Theorem 5.1 of \cite{SMAA2015d}, the cubic equation \eqref{wrty} always has a root $\mathbf{y}^{(1)}$ for which $|\mathbf{y}^{(1)}|_p=|a|_p=1$. Moreover, it follows from $|\theta-1|_p<|q|_p<1$ and $\mathbf{y}^{(1)} = y_0^{(1)}+y_1^{(1)}p+\dots$ that $\left(y_0^{(1)}\right)^3-3\left(y_0^{(1)}\right)^2 \equiv 0 \ (mod \ p).$ Thus, $y_0^{(1)}=3$ and $\mathbf{y}^{(1)}=3+y_1^{(1)}p+\dots$.

Let $\delta_1=b^2-4ac$. It is clear that $\delta_1=-3(1-\theta-q)^2$ and $|b|_p^2=|a|_p|c|_p=|1-\theta-q|_p^2=|\delta_1|_p$. Since $p\equiv 1 \ (mod \ 3),$ there always exists $\sqrt{\delta_1}\in\mathbb{Q}_p$ (or equivalently there exists $\sqrt{-3}\in\mathbb{Q}_p$). Therefore, by Theorem 5.1 of \cite{SMAA2015d}, the cubic equation \eqref{wrty} has two more roots $\mathbf{y}^{(2)}$ and $\mathbf{y}^{(3)}$ such that $|\mathbf{y}^{(2)}|_p=|\mathbf{y}^{(3)}|_p=\frac{|b|_p}{|a|_p}=|1-\theta-q|_p<1$. 

We know that $|1-\theta-q|_p=|q|_p=p^{-r}$ for some $r\in\mathbb{N}$. Then, $\mathbf{y}^{(i)}=p^r(\mathbf{y}^{(i)})^*$ for $i=2,3$ and $1-\theta-q=p^r(1-\theta-q)^{*}$. By plugging $\mathbf{y}^{(i)}$ into the cubic equation \eqref{wrty}, we have
$$
p^r((\mathbf{y}^{(i)})^{*})^3-(1+\theta+\theta^2)((\mathbf{y}^{(i)})^*)^2-(2\theta+1)(1-\theta-q)^*(\mathbf{y}^{(i)})^* -\left((1-\theta-q)^{*}\right)^2=0.
$$
Then for $i=2,3$
\begin{equation*}
3(y^{(i)}_0)^2+3(1-\theta-q)^*y^{(i)}_0+\left((1-\theta-q)^{*}\right)^2 \equiv 0 \ ({\rm mod} \ p).
\end{equation*}
Hence, $y^{(2)}_0$ and $y^{(3)}_0$ are roots of the congruent equation
\begin{equation*}
3t^2+3(1-\theta-q)^*t+\left((1-\theta-q)^*\right)^2 \equiv 0 \ ({\rm mod} \ p).
\end{equation*}
Now, to prove $\left|\mathbf{y}^{(1)}-3\right|_p=|q|_p>|\mathbf{y}^{(1)}-3+q|_p$, we introduce a new variable $z:=y-3$. Then the cubic equation \eqref{wrty} takes the following form with respect to $z$
\begin{multline}\label{wrtz}
z^3+\left[8-\theta-\theta^2\right]z^2+\left[21-6\theta(\theta+1)-(2\theta+1)(1-\theta-q)\right]z \\ +\left[q(4\theta+5)-(\theta-1)(4\theta+14)-q^2\right]=0.
\end{multline}
Let $A=(8-\theta-\theta^2),\ B=21-6\theta(\theta+1)-(2\theta+1)(1-\theta-q)$ and $C=q(4\theta+5)-(\theta-1)(4\theta+14)-q^2$. One can check that $|A|_p=1,\ |B|_p=1$ and $|C|_p=|q|_p<1$. Thus $|B|_p=|A|_p^2$ and $|C|_p<|A|_p^3$ which imply that the cubic equation \eqref{wrtz} always has a root $\mathbf{z}^{(1)}$ for which $|\mathbf{z}^{(1)}|_p=\frac{|C|_p}{|B|_p}=|q|_p$ (see the proof of Theorem 5.1, \cite{SMAA2015d}). Therefore, we have $|\mathbf{z}^{(1)}|_p=|\mathbf{y}^{(1)}-3|_p=|q|_p$ which means that $\mathbf{y}^{(1)}=3+y^{(1)}_rp^r+\cdots$ and we write the cubic equation \eqref{wrty} as follows
\begin{equation}\label{wrty2}
y^2(y-3)-(\theta+2)(\theta-1)y^2-(2\theta+1)(1-\theta-q)y-(1-\theta-q)^2=0.
\end{equation}
Since $|\theta-1|_p<|q|_p<1,$ we have
\begin{eqnarray*}
\left(\mathbf{y}^{(1)}\right)^2&\equiv& 9+6y^{(1)}_rp^r \ ({\rm mod} \ p^{r+1}), \\
\theta&\equiv & 1 \ ({\rm mod} \ p^{r+1}).
\end{eqnarray*}
It follows from  \eqref{wrty2}
\begin{eqnarray*}
9y^{(1)}_rp^r+9p^rq_0 \equiv 0 \ ({\rm mod} \ p^{r+1}), \ \textup{equivalently} \quad y^{(1)}_r+q_0 \equiv 0 \ ({\rm mod} \ p).
\end{eqnarray*}
Therefore, $y^{(1)}_r=p-q_0 \equiv -q_0 \ (mod \ p),$ which implies $|\mathbf{y}^{(1)}-3+q|_p<|\mathbf{y}^{(1)}-3|_p.$
\end{proof}

\begin{proposition}\label{fixedpoints}
$\mathbf{Fix}\{f_{\theta, q, 3}\}=\{\mathbf{x}^{(0)}, \ \mathbf{x}^{(1)},\ \mathbf{x}^{(2)},\ \mathbf{x}^{(3)}\}$, where $\mathbf{x}^{(0)}=1,\ \mathbf{x}^{(i)}=1-q+(\theta-1)(\mathbf{y}^{(i)}-1)$ for $1\leq i\leq3$ and  $\mathbf{y}^{(1)},\mathbf{y}^{(2)},\mathbf{y}^{(3)}$ are the roots of the cubic equation \eqref{wrty}.
\end{proposition}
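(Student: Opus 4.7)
The plan is to leverage the reduction already begun in the text: the identity $f_{\theta,q,3}(x)-1 = x-1$ has been algebraically massaged to show that any fixed point $x\neq 1$ must satisfy the cubic \eqref{wrtx} in $x$, and the change of variables $y = \tfrac{x-1+q}{\theta-1}+1$ (equivalently $x = 1-q+(\theta-1)(y-1)$) converts this to the cubic \eqref{wrty}. So the bulk of the proof is already done; what remains is to invoke Proposition \ref{structureroot} to enumerate the $y$-roots, translate back to $x$, and verify that we indeed obtain four distinct fixed points lying in $\textup{\textbf{Dom}}\{f_{\theta,q,3}\}$.

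First I would state that $\mathbf{x}^{(0)}=1$ is clearly a fixed point (substitute directly into \eqref{IsingPottsk=3}, or just use the telescoping identity from the text). Next, I would argue that any fixed point $x\neq 1$ must satisfy \eqref{wrtx}, and that the affine substitution $y = \tfrac{x-1+q}{\theta-1}+1$ puts this equation into the form \eqref{wrty}. Since $\theta\neq 1$, this change of variables is a bijection $\mathbb{Q}_p\to\mathbb{Q}_p$, so the non-trivial fixed points of $f_{\theta,q,3}$ are in one-to-one correspondence with the roots of \eqref{wrty} via $\mathbf{x}^{(i)} = 1-q+(\theta-1)(\mathbf{y}^{(i)}-1)$. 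Proposition \ref{structureroot} provides exactly three such roots $\mathbf{y}^{(1)},\mathbf{y}^{(2)},\mathbf{y}^{(3)}\in\mathbb{Z}_p$ (the hypothesis $p\equiv 1 \ (\rm{mod} \ 3)$ is what makes the quadratic congruence \eqref{congruenty2y3} solvable and hence produces $\mathbf{y}^{(2)},\mathbf{y}^{(3)}$), yielding precisely the three points $\mathbf{x}^{(1)},\mathbf{x}^{(2)},\mathbf{x}^{(3)}$ claimed.

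Finally, I would verify two cleanup items. For the domain condition, a direct computation gives $\mathbf{x}^{(i)}-\mathbf{x}^{(\infty)} = (\theta-1)\mathbf{y}^{(i)}$; by Proposition \ref{structureroot}, $|\mathbf{y}^{(1)}|_p=1$ and $|\mathbf{y}^{(2)}|_p=|\mathbf{y}^{(3)}|_p=|q|_p\neq 0$, so none of the $\mathbf{x}^{(i)}$ coincides with $\mathbf{x}^{(\infty)}=2-\theta-q$; likewise $\mathbf{x}^{(0)}-\mathbf{x}^{(\infty)} = \theta-1+q$ has norm $|q|_p\neq 0$ by the strong triangle inequality (since $|\theta-1|_p<|q|_p$). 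For distinctness, the three $\mathbf{x}^{(i)}$ are distinct because the $\mathbf{y}^{(i)}$ are (their $p$-adic norms distinguish $\mathbf{y}^{(1)}$ from $\mathbf{y}^{(2)},\mathbf{y}^{(3)}$, and the assumption $p\equiv 1 \ (\rm{mod} \ 3)$ ensures the discriminant in \eqref{congruenty2y3} is nonzero mod $p$, separating $\mathbf{y}^{(2)}$ from $\mathbf{y}^{(3)}$); and $\mathbf{x}^{(i)}-\mathbf{x}^{(0)} = -q+(\theta-1)(\mathbf{y}^{(i)}-1)$ has norm $|q|_p>0$, again by the strong triangle inequality.

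There is no real obstacle here: the statement is essentially a corollary of Proposition \ref{structureroot} combined with the substitution already performed in the paragraph preceding equation \eqref{wrty}. The only minor care needed is checking that translating back from $y$ to $x$ keeps us inside $\textup{\textbf{Dom}}\{f_{\theta,q,3}\}$ and preserves distinctness, which is handled uniformly by the norm bounds $|\theta-1|_p<|q|_p<1$.
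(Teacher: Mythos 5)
Your proposal is correct and follows essentially the same route as the paper, which simply cites Proposition \ref{structureroot} together with the reduction to the cubic \eqref{wrty} already carried out in the text; your additional checks of distinctness and membership in $\textup{\textbf{Dom}}\{f_{\theta,q,3}\}$ are sound refinements of that same argument.
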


\begin{proof}
The proof follows from Proposition \ref{structureroot}.
\end{proof}

\subsection{Local behavior of the fixed points}

Recall that a fixed point $\mathbf{x}$ of the Potts--Bethe mapping is called \textit{attracting} if $0 \leq |\lambda|_p < 1$, \textit{indifferent} if $|\lambda|_p=1$ and \textit{repelling} if $|\lambda|_p>1$ where $\lambda=f^{'}_{\theta,q,3}(\mathbf{x})$.

\begin{proposition}\label{behaviourfixedpoints}
The following statements hold:
\begin{itemize}
\item[(i)] $\mathbf{x}^{(0)}$ is an attracting;
\item[(ii)] $\mathbf{x}^{(1)},\ \mathbf{x}^{(2)}$ and $\mathbf{x}^{(3)}$ are repelling.
\end{itemize}
\end{proposition}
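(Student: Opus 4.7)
The plan is to differentiate $f_{\theta,q,3}$ directly and estimate the $p$-adic norm of $f'_{\theta,q,3}$ at each fixed point, leveraging Proposition~\ref{structureroot} to control the valuations of $\mathbf{y}^{(i)}$. Writing $g(x) = (\theta x+q-1)/(x+\theta+q-2)$, a short calculation gives $g'(x) = (\theta-1)(\theta+q-1)/(x+\theta+q-2)^{2}$; since $f_{\theta,q,3}=g^{3}$, at a fixed point $x$ the identity $g(x)^{3}=x$ lets me substitute $3g(x)^{2} = 3x/g(x)$ and obtain the compact formula
\[
f'_{\theta,q,3}(x) \;=\; \frac{3x(\theta-1)(\theta+q-1)}{(\theta x+q-1)(x+\theta+q-2)}.
\]

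For $\mathbf{x}^{(0)}=1$ the denominator equals $(\theta+q-1)^{2}$, whose $p$-adic norm is $|q|_{p}^{2}$ by the strong triangle inequality (as $|\theta-1|_{p}<|q|_{p}$). Since $p\equiv 1 \pmod 3$ forces $p\neq 3$ and hence $|3|_{p}=1$, this yields $|f'_{\theta,q,3}(1)|_{p} = |\theta-1|_{p}/|q|_{p} < 1$, giving (i). For $i=1,2,3$ I pass to the $y$-coordinate via $x = 1-q+(\theta-1)(y-1)$ (the substitution used to derive \eqref{wrty}), which gives $x+\theta+q-2=(\theta-1)y$ and $\theta x+q-1 = (\theta-1)A(y)$ with $A(y):=\theta y-\theta+1-q$, so
\[
f'_{\theta,q,3}(\mathbf{x}^{(i)}) \;=\; \frac{3\,\mathbf{x}^{(i)}(\theta+q-1)}{(\theta-1)\,\mathbf{y}^{(i)}\,A(\mathbf{y}^{(i)})}.
\]

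For $\mathbf{y}^{(1)}$, Proposition~\ref{structureroot}(ii) gives $|\mathbf{y}^{(1)}|_{p}=1$ and $\mathbf{y}^{(1)}\equiv 3\pmod p$; together with $\theta\equiv 1$ and $q\equiv 0 \pmod p$ this forces $A(\mathbf{y}^{(1)})\equiv 3\pmod p$, hence $|A(\mathbf{y}^{(1)})|_{p}=1$. Combined with $|\mathbf{x}^{(1)}|_{p}=1$, I read off $|f'_{\theta,q,3}(\mathbf{x}^{(1)})|_{p} = |q|_{p}/|\theta-1|_{p} > 1$, so $\mathbf{x}^{(1)}$ is repelling. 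For $i=2,3$, Proposition~\ref{structureroot}(iii) gives $|\mathbf{y}^{(i)}|_{p}=|q|_{p}$, and the chief obstacle is to verify that $|A(\mathbf{y}^{(i)})|_{p}=|q|_{p}$ rather than something strictly smaller. Decomposing $A(\mathbf{y}^{(i)}) = \bigl(\mathbf{y}^{(i)}+(1-\theta-q)\bigr) + (\theta-1)\mathbf{y}^{(i)}$, the last summand has norm at most $|\theta-1|_{p}|q|_{p} < |q|_{p}^{2}$, so it suffices to show $|\mathbf{y}^{(i)}+(1-\theta-q)|_{p} = |q|_{p}$.

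To settle this non-cancellation I reduce modulo $p$: letting $a_{0} := (1-\theta-q)^{*}_{0}$, the claim becomes $y^{(i)}_{0}+a_{0}\not\equiv 0\pmod p$. By \eqref{congruenty2y3} the two roots $y^{(2)}_{0},y^{(3)}_{0}$ satisfy $y^{(2)}_{0}+y^{(3)}_{0}\equiv -a_{0}\pmod p$, so $y^{(i)}_{0}+a_{0}\equiv -y^{(j)}_{0}\pmod p$ for $\{i,j\}=\{2,3\}$; if this vanished then $y^{(j)}_{0}\equiv 0$, and substituting back into \eqref{congruenty2y3} would force $a_{0}^{2}\equiv 0\pmod p$, contradicting $a_{0}\not\equiv 0$ (which holds because $|\theta-1|_{p}<|q|_{p}$ yields $1-\theta-q\equiv -q\pmod{p^{r+1}}$, whence $a_{0}\equiv -q^{*}_{0}\not\equiv 0\pmod p$). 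With $|A(\mathbf{y}^{(i)})|_{p}=|q|_{p}$ in hand, I obtain $|f'_{\theta,q,3}(\mathbf{x}^{(i)})|_{p} = 1/(|\theta-1|_{p}|q|_{p}) > 1$ for $i=2,3$, completing (ii).
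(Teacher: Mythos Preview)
Your proof is correct and follows essentially the same approach as the paper: both derive the compact formula $f'_{\theta,q,3}(x)=\frac{3x(\theta-1)(\theta+q-1)}{(\theta x+q-1)(x+\theta+q-2)}$ at a fixed point, pass to the $y$-coordinate to rewrite the denominator as $(\theta-1)^{2}\mathbf{y}^{(i)}(\theta\mathbf{y}^{(i)}+1-\theta-q)$, and then use Proposition~\ref{structureroot} (and in particular the congruence \eqref{congruenty2y3}) to rule out cancellation in $\theta\mathbf{y}^{(i)}+1-\theta-q$ for $i=2,3$. Your Vieta argument spelling out that $y^{(i)}_{0}+a_{0}\equiv -y^{(j)}_{0}\not\equiv 0\pmod p$ is just an explicit rendering of what the paper leaves as ``by means of \eqref{congruenty2y3}, we can check\ldots''.
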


\begin{proof}
We have to show $\left|f'_{\theta, q, 3}(\mathbf{x}^{(0)})\right|_p<1$ and $\left|f'_{\theta, q, 3}(\mathbf{x}^{(i)})\right|_p>1$ for $1\leq i\leq 3$. It is easy to check that for $0\leq i\leq 3$,
\begin{eqnarray}\label{derivative}
f'_{\theta, q, 3}(\mathbf{x}^{(i)})&=&\frac{3(\theta-1)(\theta-1+q)(\theta \mathbf{x}^{(i)}+q-1)^2}{(\mathbf{x}^{(i)}+\theta+q-2)^4}\nonumber\\
&=&\frac{3(\theta-1)(\theta-1+q)\mathbf{x}^{(i)}}{(\theta \mathbf{x}^{(i)}+q-1)(\mathbf{x}^{(i)}+\theta+q-2)}.
\end{eqnarray}
Hence, $\left|f'_{\theta, q, 3}(\mathbf{x}^{(0)})\right|_p=\frac{|\theta-1|_p}{|q|_p} < 1$.
Recalling $\mathbf{x}^{(i)}=1-q+(\theta-1)(\mathbf{y}^{(i)}-1)$ for $1 \leq i \leq 3$, we deduce from \eqref{derivative} that
\begin{equation}\label{derivativefixed2}
f'_{\theta, q, 3}(\mathbf{x}^{(i)})=\frac{3(\theta-1+q)\mathbf{x}^{(i)}}{(\theta-1)\mathbf{y}^{(i)}(\theta \mathbf{y}^{(i)}+1-\theta-q)}.
\end{equation}
Due to Proposition \ref{structureroot}, we have $\left|\mathbf{y}^{(2)}\right|_p=\left|\mathbf{y}^{(3)}\right|_p=|1-\theta-q|_p<1=\left|\mathbf{y}^{(1)}\right|_p$ and $|\mathbf{x}^{(i)}|_p=1$ for $1 \leq i \leq 3$. Therefore, $\left|f'_{\theta, q, 3}(\mathbf{x}^{(1)})\right|_p=\frac{|q|_p}{|\theta-1|_p} > 1$. For the fixed points $\mathbf{x}^{(2)}$ and $\mathbf{x}^{(3)}$, it follows from \eqref{derivativefixed2} that for $i=2,3$
\begin{equation}\label{derivativefixed3}
f'_{\theta, q, 3}(\mathbf{x}^{(i)})=\frac{3(\theta-1+q)|1-\theta-q|_p^2\mathbf{x}^{(i)}}{(\theta-1)\left(\mathbf{y}^{(i)}\right)^*\left(\theta \left(\mathbf{y}^{(i)}\right)^{*}+\left(1-\theta-q\right)^{*}\right)}.
\end{equation}
By means of \eqref{congruenty2y3}, we can check $\theta\left(\mathbf{y}^{(i)}\right)^* \not\equiv -\left(1-\theta-q\right)^* \ (mod \ p)$ for $i=2,3$. Consequently, for $ i=2,3$
\begin{eqnarray*}
\left|f'_{\theta, q, 3}(\mathbf{x}^{(i)})\right|_p=\frac{|\theta-1+q|_p}{|1-\theta-q|_p^2|\theta-1|_p}=\frac{1}{|q|_p|\theta-1|_p} > 1.
\end{eqnarray*}
\end{proof}

\subsection{Attracting basin of the attracting fixed point}
We describe the attracting basin $$\mathfrak{B}(\mathbf{x}^{(0)}):=\left\{ x \in \mathbb{Q}_p : \lim_{n \to +\infty}f^{n}_{\theta, q, 3}(x)=\mathbf{x}^{(0)}\right\}$$
of the attracting fixed point $\mathbf{x}^{(0)}=1.$ 

We introduce the following sets
\begin{eqnarray*}
\mathcal{A}_0 &:=& \left\{ x \in \mathbb{Q}_p : \left|x-\mathbf{x}^{(0)}\right|_p < |q|_p \right\}, \\ 
\mathcal{A}_1 &:=& \left\{ x \in \mathbb{Q}_p : \left|x-\mathbf{x}^{(0)}\right|_p > |q|_p \right\}, \\ 
\mathcal{A}_{0,\infty} &:=& \left\{ x \in \mathbb{Q}_p : \left|x-\mathbf{x}^{(\infty)}\right|_p = \left|x-\mathbf{x}^{(0)}\right|_p = |q|_p \right\}, \\ 
\mathcal{A}_2 &:=& \left\{ x \in \mathbb{Q}_p : \left|\theta-1\right|_p < \left|x-\mathbf{x}^{(\infty)}\right|_p < |q|_p \right\},\\
\mathcal{A}_{1,\infty} &:=& \left\{ x \in \mathbb{Q}_p : \left|x-\mathbf{x}^{(1)}\right|_p = \left|x-\mathbf{x}^{(\infty)}\right|_p = |\theta-1|_p \right\}, \\ 
\mathcal{C}_{1} &:=& \left\{ x \in \mathbb{Q}_p : \left|x-\mathbf{x}^{(1)}\right|_p < \left|x-\mathbf{x}^{(\infty)}\right|_p = |\theta-1|_p \right\}, \\ 
\mathcal{A}_3 &:=& \left\{x \in \mathbb{Q}_p : |q|_p\left|\theta-1\right|_p < \left|x-\mathbf{x}^{(\infty)}\right|_p < \left|\theta-1\right|_p \right\}, \\
\mathcal{A}_{2,3,\infty} &:=& \left\{ x \in \mathbb{Q}_p : \left|x-\mathbf{x}^{(2)}\right|_p =\left|x-\mathbf{x}^{(3)}\right|_p = \left|x-\mathbf{x}^{(\infty)}\right|_p = |q|_p|\theta-1|_p\right\}, \\
\mathcal{C}_{2} &:=& \left\{ x \in \mathbb{Q}_p : \left|x-\mathbf{x}^{(2)}\right|_p < \left|x-\mathbf{x}^{(\infty)}\right|_p = |q|_p|\theta-1|_p\right\},\\
\mathcal{C}_{3} &:=& \left\{ x \in \mathbb{Q}_p : \left|x-\mathbf{x}^{(3)}\right|_p < \left|x-\mathbf{x}^{(\infty)}\right|_p = |q|_p|\theta-1|_p\right\},\\
\mathcal{A}_{\infty} &:=& \left\{ x \in \mathbb{Q}_p : \ 0<\left|x-\mathbf{x}^{(\infty)}\right|_p < |q|_p\left|\theta-1\right|_p \right\}.
\end{eqnarray*}
We have
$$\textup{\textbf{Dom}}\{f_{\theta,q,3}\}=\mathcal{A}_0 \cup \mathcal{A}_1\cup \mathcal{A}_2\cup \mathcal{A}_3\cup \mathcal{A}_{0,\infty} \cup \mathcal{A}_{1,\infty}\cup \mathcal{A}_{2,3,\infty}\cup\mathcal{A}_{\infty}\cup \mathcal{C}_{1}\cup \mathcal{C}_{2}\cup \mathcal{C}_{3}.$$

The following properties of the fixed points $\mathbf{x}^{(0)}, \mathbf{x}^{(1)}, \mathbf{x}^{(2)}, \mathbf{x}^{(3)}$ will be often used.
\begin{itemize}
	\item[(i)] $\mathbf{x}^{(0)}=1$ and $\mathbf{x}^{(\infty)}=2-q-\theta=\mathbf{x}^{(0)}-q-(\theta-1);$
	\item[(ii)] $\mathbf{x}^{(i)}=1-q+(\theta-1)(\mathbf{y}^{(i)}-1)=\mathbf{x}^{(\infty)}+(\theta-1)\mathbf{y}^{(i)}$ for $i=1,2,3;$
	\item[(iii)] $|\mathbf{x}^{(\infty)}|_p=|\mathbf{x}^{(i)}|_p=1$ and $\mathbf{x}^{(\infty)},\mathbf{x}^{(i)}\in\mathcal{E}_p$ for $i=0,1,2,3;$
	\item[(iv)] $|\mathbf{y}^{(2)}|_p=|\mathbf{y}^{(3)}|_p=|q|_p<1=|\mathbf{y}^{(1)}|_p$ and $|\mathbf{y}^{(1)}-3+q|_p<|\mathbf{y}^{(1)}-3|_p=|q|_p;$
	\item[(v)] $|\mathbf{y}^{(2)}-\mathbf{y}^{(3)}|_p=|q|_p;$ 
	\item[(vi)] $|\mathbf{x}^{(0)}-\mathbf{x}^{(\infty)}|_p=|q|_p$ and $|\mathbf{x}^{(1)}-\mathbf{x}^{(\infty)}|_p=|\theta-1|_p;$
	\item[(vii)] $|\mathbf{x}^{(2)}-\mathbf{x}^{(\infty)}|_p=|\mathbf{x}^{(3)}-\mathbf{x}^{(\infty)}|_p=|q|_p|\theta-1|_p;$
	\item[(viii)] $|\mathbf{x}^{(i)}-\mathbf{x}^{(0)}|_p=|q|_p$ for $i=1,2,3$ and $|\mathbf{x}^{(i)}-\mathbf{x}^{(1)}|_p=|\theta-1|_p$ for $i=2,3;$  
	\item[(ix)] $|\mathbf{x}^{(2)}-\mathbf{x}^{(3)}|_p=|q|_p|\theta-1|_p.$
	\item[(x)] $\mathbf{x}^{(0)} \in \mathcal{A}_0,\ \mathbf{x}^{(1)} \in \mathcal{C}_1, \ \mathbf{x}^{(2)}\in\mathcal{C}_{2},\ \mathbf{x}^{(3)}\in\mathcal{C}_{3}.$ 
\end{itemize}

\begin{proposition}\label{p=1mod3}
The following inclusions hold:
\begin{itemize}
\item[(i)] $\mathcal{A}_0 \cup \mathcal{A}_1 \cup \mathcal{A}_2 \cup \mathcal{A}_{0,\infty}\subset f^{-1}_{\theta,q,3}\left(\mathcal{A}_0\right);$
\item[(ii)] $\mathcal{A}_{\infty}\cup\mathcal{A}_3\cup\mathcal{A}_{2,3,\infty}\subset f^{-1}_{\theta,q,3}\left(\mathcal{A}_{1} \right)\subset f^{-2}_{\theta,q,3}\left(\mathcal{A}_0\right);$
\item[(iii)] $\mathcal{A}_{1,\infty}\subset f^{-1}_{\theta,q,3}\left(\mathcal{A}_{0,\infty} \right)\subset f^{-2}_{\theta,q,3}\left(\mathcal{A}_0\right).$
\end{itemize}
\end{proposition}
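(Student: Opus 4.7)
The plan is to analyze $|f_{\theta,q,3}(x)-1|_p$ through the factorization
\[
f_{\theta,q,3}(x)-1=\epsilon\,g(\epsilon),\qquad \epsilon=\frac{(\theta-1)(x-1)}{x-\mathbf{x}^{(\infty)}},\qquad g(\epsilon)=\epsilon^{2}+3\epsilon+3,
\]
obtained from $f_{\theta,q,3}=(u/v)^{3}=(1+\epsilon)^{3}$ with $u=\theta x+q-1$, $v=x-\mathbf{x}^{(\infty)}$ and $u-v=(\theta-1)(x-1)$. Because $p\equiv 1\pmod 3$ forces $p\neq 3$, we have $|3|_p=1$, hence $|g(\epsilon)|_p=1$ when $|\epsilon|_p<1$ and $|g(\epsilon)|_p=|\epsilon|_p^{2}$ when $|\epsilon|_p>1$. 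Using $|1-\mathbf{x}^{(\infty)}|_p=|q|_p$, I would compute $|x-1|_p$ and $|x-\mathbf{x}^{(\infty)}|_p$ in each region and read off $|\epsilon|_p$, and then $|f_{\theta,q,3}(x)-1|_p$.

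For (i): in $\mathcal{A}_0$, $|\epsilon|_p<|\theta-1|_p$; in $\mathcal{A}_1$ and $\mathcal{A}_{0,\infty}$, $|\epsilon|_p=|\theta-1|_p$; in $\mathcal{A}_2$, $|\theta-1|_p<|\epsilon|_p<|q|_p$. In all four cases $|\epsilon|_p<|q|_p$, so $|f_{\theta,q,3}(x)-1|_p=|\epsilon|_p<|q|_p$, i.e.\ $f_{\theta,q,3}(x)\in\mathcal{A}_0$. For (ii), in $\mathcal{A}_3$ the recipe gives $|q|_p<|\epsilon|_p<1$, so $|f_{\theta,q,3}(x)-1|_p=|\epsilon|_p>|q|_p$; in $\mathcal{A}_\infty$, $|\epsilon|_p>1$ gives $|f_{\theta,q,3}(x)-1|_p=|\epsilon|_p^{3}>|q|_p$; both put $f_{\theta,q,3}(x)\in\mathcal{A}_1$.

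The borderline case in (ii) is $\mathcal{A}_{2,3,\infty}$, where $|\epsilon|_p=1$ and $|g(\epsilon)|_p$ cannot be read off naively. For this I would switch to the equivalent form
\[
f_{\theta,q,3}(x)-1=\frac{(x-1)\bigl(y^{3}-C(y)\bigr)}{y^{3}},\qquad y=\frac{x-\mathbf{x}^{(\infty)}}{\theta-1},
\]
where $C(y)=(y-\mathbf{y}^{(1)})(y-\mathbf{y}^{(2)})(y-\mathbf{y}^{(3)})$ is the monic cubic of Proposition \ref{structureroot}; the identity $y^{3}-C(y)=3y^{2}+3(x-1)y+(x-1)^{2}$ follows by substituting $x-1=(\theta-1)y-(q+\theta-1)$ and matching coefficients. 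For $x\in\mathcal{A}_{2,3,\infty}$ one has $|y|_p=|q|_p$, $|y-\mathbf{y}^{(1)}|_p=1$, and $|y-\mathbf{y}^{(i)}|_p=|x-\mathbf{x}^{(i)}|_p/|\theta-1|_p=|q|_p$ for $i=2,3$, whence $|C(y)|_p=|q|_p^{2}>|q|_p^{3}=|y^{3}|_p$. Therefore $|f_{\theta,q,3}(x)-1|_p=|q|_p\cdot|q|_p^{2}/|q|_p^{3}=1>|q|_p$, placing $f_{\theta,q,3}(x)\in\mathcal{A}_1$. Combined with (i), this proves (ii).

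For (iii), if $x\in\mathcal{A}_{1,\infty}$ then $|x-\mathbf{x}^{(\infty)}|_p=|\theta-1|_p$ yields $|\epsilon|_p=|q|_p$, so $|f_{\theta,q,3}(x)-1|_p=|q|_p$. To conclude $f_{\theta,q,3}(x)\in\mathcal{A}_{0,\infty}$ I must also verify $|f_{\theta,q,3}(x)-\mathbf{x}^{(\infty)}|_p=|q|_p$. Writing $f_{\theta,q,3}(x)=(A/y)^{3}$ with $A=\theta y-(q+\theta-1)$ and $\delta:=A/y-1$ of norm $|q|_p$, I expand
\[
f_{\theta,q,3}(x)-\mathbf{x}^{(\infty)}=3\delta+3\delta^{2}+\delta^{3}+(q+\theta-1)
\]
and regroup the leading terms as $3\delta+(q+\theta-1)=(q+\theta-1)(y-3)/y+3(\theta-1)$. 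Proposition \ref{structureroot}(ii) supplies $\mathbf{y}^{(1)}\equiv 3\pmod p$; since $x\in\mathcal{A}_{1,\infty}$ means $|y-\mathbf{y}^{(1)}|_p=1$, we get $|y-3|_p=1$, so $|3\delta+(q+\theta-1)|_p=|q|_p$, which dominates $|3\delta^{2}|_p=|q|_p^{2}$ and $|\delta^{3}|_p=|q|_p^{3}$. Hence $|f_{\theta,q,3}(x)-\mathbf{x}^{(\infty)}|_p=|q|_p$ and $f_{\theta,q,3}(x)\in\mathcal{A}_{0,\infty}$; together with (i) this gives (iii). The main obstacles are exactly these two borderline cases: $\mathcal{A}_{2,3,\infty}$, where $|\epsilon|_p=1$ forces the cubic $C(y)$ factorization; and $\mathcal{A}_{1,\infty}$, where the arithmetic input $\mathbf{y}^{(1)}\equiv 3\pmod p$ is essential to rule out an unwanted cancellation in $f_{\theta,q,3}(x)-\mathbf{x}^{(\infty)}$.
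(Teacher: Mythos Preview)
Your argument is correct and follows essentially the same route as the paper. Your $\epsilon=(\theta-1)(x-1)/(x-\mathbf{x}^{(\infty)})$ and $g(\epsilon)=\epsilon^2+3\epsilon+3$ are exactly the paper's $g_1(x)$ in disguise (since $u/v=1+\epsilon$ gives $g_1(x)=(u/v)^2+(u/v)+1=g(\epsilon)$), so parts (i), the $\mathcal{A}_3,\mathcal{A}_\infty$ pieces of (ii), and the first half of (iii) are the same computation as in the paper, just written more uniformly.

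The only genuine difference is how you handle the two borderline cases. For $\mathcal{A}_{2,3,\infty}$ the paper estimates $|g(x)|_p$ directly by noting $\mathbf{x}^{(2)},\mathbf{x}^{(3)}\notin\mathcal{A}_{2,3,\infty}$ and evaluating the quadratic $(\theta^2+\theta+1)(y^\ast)^2+(2\theta+1)(1-\theta-q)^\ast y^\ast+((1-\theta-q)^\ast)^2$ modulo $p$; you instead observe the clean identity $f_{\theta,q,3}(x)-1=(x-1)(y^3-C(y))/y^3$ with $C(y)=\prod_i(y-\mathbf{y}^{(i)})$ and read off $|C(y)|_p$ from the known distances to the roots. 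This is the same information (the roots lie outside the set), but your packaging via the factored cubic is arguably more transparent. For $\mathcal{A}_{1,\infty}$ both arguments reduce to the single arithmetic input $|\mathbf{y}^{(1)}-3|_p<1$ from Proposition~\ref{structureroot}(ii); your regrouping $3\delta+(q+\theta-1)=(q+\theta-1)(y-3)/y+3(\theta-1)$ is a slight reorganization of the paper's expansion of $f_{\theta,q,3}(x)-\mathbf{x}^{(0)}+q$, leading to the same conclusion. (Your aside that $y^3-C(y)=3y^2+3(x-1)y+(x-1)^2$ is correct but not actually used in your $\mathcal{A}_{2,3,\infty}$ argument.)
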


\begin{proof}
(i) We have
\begin{equation}\label{fminusx0}
f_{\theta,q,3}(x)-\mathbf{x}^{(0)} =\frac{(\theta-1)(x-\mathbf{x}^{(0)})}{(x-\mathbf{x}^{(\infty)})^3}g(x)
\end{equation}
where $g(x):=(\theta x+q-1)^2+(\theta x+q-1)(x-\mathbf{x}^{(\infty)})+(x-\mathbf{x}^{(\infty)})^2$. 

Let us first show for any $x \in \mathcal{A}_0 \cup \mathcal{A}_1,\ f_{\theta,q,3}(x) \in \mathcal{A}_0$.
In fact
$$
|x-\mathbf{x}^{(\infty)}|_p=\left|x-\mathbf{x}^{(0)}+q+(\theta-1)\right|_p=
\begin{cases}
|q|_p, & x \in \mathcal{A}_0, \\
|x-\mathbf{x}^{(0)}|_p, & x \in \mathcal{A}_1, \\
\end{cases}
$$
$$
|\theta x+q-1|_p=\left|\theta(x-\mathbf{x}^{(0)})+q+(\theta-1)\right|_p=
\begin{cases}
|q|_p, & x \in \mathcal{A}_0, \\
|x-\mathbf{x}^{(0)}|_p, & x \in \mathcal{A}_1.
\end{cases}
$$
Thus,
$$|g(x)|_p \leq
\begin{cases}
|q|_p^2, & x \in \mathcal{A}_0, \\
|x-\mathbf{x}^{(0)}|_p^2, & x \in \mathcal{A}_1. \\
\end{cases}
$$
Therefore, we deduce from \eqref{fminusx0} that 
$$
|f_{\theta,q,3}(x)-\mathbf{x}^{(0)}|_p \leq
\left\{
\begin{array}{cc}
\frac{|x-\mathbf{x}^{(0)}|_p}{|q|_p}|\theta-1|_p, & x \in \mathcal{A}_0 \\
|\theta-1|_p, & x \in 
\mathcal{A}_1
\end{array}
\right\}
\leq |\theta-1|_p<|q|_p.
$$
This means $f_{\theta,q,3}(x) \in \mathcal{A}_0$ for any $x \in \mathcal{A}_0 \cup \mathcal{A}_1$.

Now, we show for any $x \in \mathcal{A}_{0,\infty},\ f_{\theta,q,3}(x) \in \mathcal{A}_0$. We have
\begin{eqnarray*}
|x-\mathbf{x}^{(0)}|_p=|x-\mathbf{x}^{(\infty)}|_p =|q|_p,\\	
|\theta x+q-1|_p = \left|\theta(x-\mathbf{x}^{(\infty)})+(\theta-1)(1-\theta-q)\right|_p= |q|_p,
\end{eqnarray*}
which imply $|g(x)|_p  \leq |q|^2_p$. Then by \eqref{fminusx0}, $|f_{\theta,q,3}(x)-\mathbf{x}^{(0)}|_p \leq {|\theta-1|_p}<|q|_p$ for any $x \in \mathcal{A}_{0,\infty}$.

Furthermore, we also have
\begin{eqnarray}\label{IsingPottsSingular}
f_{\theta, q, 3}(x)= \left(\theta+\frac{(\theta-1)(1-\theta-q)}{x-\mathbf{x}^{(\infty)}}\right)^3
\end{eqnarray}
and
\begin{eqnarray}\label{IsingPottsSingular2}
f_{\theta, q, 3}(x)-\mathbf{x}^{(0)}=\frac{(\theta-1)(x-\mathbf{x}^{(0)})}{x-\mathbf{x}^{(\infty)}}g_1(x)
\end{eqnarray}
where $g_1(x)=\left(\theta+\frac{(\theta-1)(1-\theta-q)}{x-\mathbf{x}^{(\infty)}}\right)^2+\left(\theta+\frac{(\theta-1)(1-\theta-q)}{x-\mathbf{x}^{(\infty)}}\right)+1$. 

Let us show for any $x \in \mathcal{A}_2,\ f_{\theta,q,3}(x) \in \mathcal{A}_0.$  Indeed, for any $x \in \mathcal{A}_2,$
$$
\left|\frac{(\theta-1)(1-\theta-q)}{x-\mathbf{x}^{(\infty)}}\right|_p=\frac{|q|_p|\theta-1|_p}{|x-\mathbf{x}^{(\infty)}|_p}<1
$$
which implies $|g_1(x)|_p= 1$ and
$$
\left|f_{\theta, q, 3}(x)-\mathbf{x}^{(0)}\right|_p=\frac{\left|\theta-1\right|_p}{\left|x-\mathbf{x}^{(\infty)}\right|_p}\left|x-\mathbf{x}^{(0)}\right|_p<\left|x-\mathbf{x}^{(0)}\right|_p=|q|_p.
$$
Therefore, we have shown $\mathcal{A}_0 \cup \mathcal{A}_1 \cup \mathcal{A}_2 \cup \mathcal{A}_{0,\infty}\subset f^{-1}_{\theta,q,3}\left(\mathcal{A}_0\right)$.

(ii) We want to show that $f_{\theta,q,3}(x) \in \mathcal{A}_1$ for any $x \in \mathcal{A}_{\infty} \cup \mathcal{A}_3\cup\mathcal{A}_{2,3,\infty}.$ 

Let $x \in \mathcal{A}_{\infty}$. We have 
$$
\left|\frac{(\theta-1)(1-\theta-q)}{x-\mathbf{x}^{(\infty)}}\right|_p=\frac{|q|_p|\theta-1|_p}{|x-\mathbf{x}^{(\infty)}|_p}>1
$$
which implies
$$
\left|f_{\theta,q,3}(x)\right|_p=\left|\theta+\frac{(\theta-1)(1-\theta-q)}{x-\mathbf{x}^{(\infty)}}\right|_p^3 > 1 > |q|_p.
$$
Then by \eqref{IsingPottsSingular}, $\left|f_{\theta,q,3}(x)-\mathbf{x}^{(0)}\right|_p=\left|f_{\theta,q,3}(x)\right|_p>|q|_p.$ Thus,  $f_{\theta,q,3}\left(\mathcal{A}_{\infty}\right)\subset\mathcal{A}_1$. 

Let $x \in \mathcal{A}_{3}$. We have  
$$
\left|\frac{(\theta-1)(1-\theta-q)}{x-\mathbf{x}^{(\infty)}}\right|_p=\frac{|\theta-1|_p|q|_p}{|x-\mathbf{x}^{(\infty)}|_p}<1
$$
which implies $\left|g_1(x)\right|_p= 1$. Then by \eqref{IsingPottsSingular2}, $$|q|_p<\left|f_{\theta,q,3}(x)-\mathbf{x}^{(0)}\right|_p=\frac{\left|\theta-1\right|_p\left|x-\mathbf{x}^{(0)}\right|_p}{\left|x-\mathbf{x}^{(\infty)}\right|_p}<1.$$ Hence, $f_{\theta,q,3}\left(\mathcal{A}_3\right)\subset\mathcal{A}_1$. 

Let $x \in \mathcal{A}_{2,3,\infty}.$ Set $y:=\frac{x-\mathbf{x}^{(\infty)}}{\theta -1}.$ Then, $|y|_p=|q|_p,\ y=\frac{y^{*}}{|q|_p}$ and
\begin{eqnarray*}
g(x)&=&(\theta x+q-1)^2+(\theta x+q-1)(x-\mathbf{x}^{(\infty)})+(x-\mathbf{x}^{(\infty)})^2\\
&=&(\theta-1)^2\left[(\theta^2+\theta+1)y^2+(2\theta+1)(1-\theta-q)y+(1-\theta-q)^2\right]\\
&=&\frac{(\theta-1)^2}{|q|^2_p}\left[(\theta^2+\theta+1)(y^{*})^2+(2\theta+1)(1-\theta-q)^{*}(y^{*})+\left((1-\theta-q)^{*}\right)^2\right].
\end{eqnarray*}
Since $\mathbf{x}^{(2)},\mathbf{x}^{(3)}\not\in \mathcal{A}_{2,3,\infty},$ then $|g(x)|_p=|q|^2_p|\theta-1|^2_p$ for any $x \in \mathcal{A}_{2,3,\infty}.$ It follows from \eqref{fminusx0} that $|f_{\theta,q,3}(x)-\mathbf{x}^{(0)}|_p=1>|q|_p$ for any $x\in \mathcal{A}_{2,3,\infty}$. This means $f_{\theta,q,3}\left(\mathcal{A}_{2,3,\infty}\right)\subset\mathcal{A}_1.$ By (i), $\mathcal{A}_1 \subset f^{-1}_{\theta,q,3}\left(\mathcal{A}_0\right)$, then the assertion follows.

(iii) Finally, let us show that $f_{\theta,q,3}(x) \in \mathcal{A}_{0,\infty}$ for any $x \in \mathcal{A}_{1,\infty}.$ 

We first show that $|f_{\theta,q,3}(x)-\mathbf{x}^{(0)}|_p=|q|_p$ for any  $x \in \mathcal{A}_{1,\infty}.$ Indeed, we have
\begin{eqnarray*}
	\left|\frac{(\theta-1)(1-\theta-q)}{x-\mathbf{x}^{(\infty)}}\right|_p=\frac{|\theta-1|_p|q|_p}{|x-\mathbf{x}^{(\infty)}|_p}<1
\end{eqnarray*} 
and $\left|g_1(x)-3\right|_p<\left|g_1(x)\right|_p= 1.$ By \eqref{IsingPottsSingular2},
\begin{eqnarray*}
\left|f_{\theta,q,3}(x)-\mathbf{x}^{(0)}\right|_p=\frac{\left|\theta-1\right|_p\left|x-\mathbf{x}^{(0)}\right|_p}{\left|x-\mathbf{x}^{(\infty)}\right|_p}=|q|_p.
\end{eqnarray*}

We now show that $|f_{\theta,q,3}(x)-\mathbf{x}^{(\infty)}|_p=|q|_p$ for any  $x \in \mathcal{A}_{1,\infty}.$ Since  $|f_{\theta,q,3}(x)-\mathbf{x}^{(\infty)}|_p=|f_{\theta,q,3}(x)-\mathbf{x}^{(0)}+q+(\theta-1)|_p=|f_{\theta,q,3}(x)-\mathbf{x}^{(0)}+q|_p,$ it suffices to show $|f_{\theta,q,3}(x)-\mathbf{x}^{(0)}+q|_p=|q|_p.$ In fact
\begin{multline*}
f_{\theta,q,3}(x)-\mathbf{x}^{(0)}+q=\frac{(\theta-1)(x-\mathbf{x}^{(0)})}{x-\mathbf{x}^{(\infty)}}\left(g_1(x)-3\right)\\+3\left((\theta-1)-\frac{(\theta-1)^2}{x-\mathbf{x}^{(\infty)}}\right)+\frac{q(\theta-1)(\mathbf{y}^{(1)}-3)}{x-\mathbf{x}^{(\infty)}}+q\frac{x-\mathbf{x}^{(1)}}{x-\mathbf{x}^{(\infty)}}.
\end{multline*}
Since $|\mathbf{y}^{(1)}-3|_p=|x-\mathbf{x}^{(0)}|_p=|q|_p,\ |x-\mathbf{x}^{(\infty)}|_p=|x-\mathbf{x}^{(1)}|_p=|\theta-1|_p, \ |g_1(x)-3|_p<1,$ we have $|f_{\theta,q,3}(x)-\mathbf{x}^{(0)}+q|_p=|q|_p.$ This means $|f_{\theta,q,3}(x)-\mathbf{x}^{(\infty)}|_p=|q|_p$.  Consequently, $f_{\theta,q,3}\left(\mathcal{A}_{1,\infty}\right)\subset\mathcal{A}_{0,\infty}$ or $\mathcal{A}_{1,\infty}\subset f^{-1}_{\theta,q,3}\left(\mathcal{A}_{0,\infty} \right)\subset f^{-2}_{\theta,q,3}\left(\mathcal{A}_0\right).$
\end{proof}

We now describe the attracting basin $\mathfrak{B}\left( \mathbf{x}^{(0)} \right)$.
\begin{theorem}\label{basin}
We have
$$\textup{\textbf{Dom}}\{f_{\theta,q,3}\}\setminus \left(\mathcal{C}_1 \cup \mathcal{C}_{2}\cup \mathcal{C}_{3}\right) \subset \mathfrak{B}\left( \mathbf{x}^{(0)} \right)=\bigcup\limits^{+\infty}_{n = 0}f^{-n}_{\theta, q, 3}\left(\mathcal{A}_0\right).$$ 
\end{theorem}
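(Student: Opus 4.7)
The plan is to reduce the theorem to a routine assembly of Proposition \ref{p=1mod3} together with a contraction argument on the attracting ball $\mathcal{A}_0$. The equality $\mathfrak{B}(\mathbf{x}^{(0)}) = \bigcup_{n=0}^{+\infty} f^{-n}_{\theta,q,3}(\mathcal{A}_0)$ will come essentially from the definition of the basin together with the fact that $\mathcal{A}_0$ itself is forward-invariant and contained in the basin; the inclusion $\textup{\textbf{Dom}}\{f_{\theta,q,3}\}\setminus(\mathcal{C}_1\cup\mathcal{C}_2\cup\mathcal{C}_3) \subset \bigcup_{n}f^{-n}_{\theta,q,3}(\mathcal{A}_0)$ will be an immediate corollary of the set-partition of the domain combined with Proposition \ref{p=1mod3}.

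First, I would establish that $\mathcal{A}_0\subset\mathfrak{B}(\mathbf{x}^{(0)})$. The estimate already appearing in the proof of Proposition \ref{p=1mod3}(i) gives, for any $x\in\mathcal{A}_0$,
\[
\left|f_{\theta,q,3}(x)-\mathbf{x}^{(0)}\right|_p \leq \frac{|\theta-1|_p}{|q|_p}\,\left|x-\mathbf{x}^{(0)}\right|_p.
\]
Setting $\lambda:=|\theta-1|_p/|q|_p<1$ (our standing hypothesis), this shows $f_{\theta,q,3}$ contracts $\mathcal{A}_0$ into itself with ratio $\lambda$, so iteration yields $|f^n_{\theta,q,3}(x)-\mathbf{x}^{(0)}|_p\leq \lambda^n\,|x-\mathbf{x}^{(0)}|_p\to 0$. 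Consequently $\mathcal{A}_0\subset\mathfrak{B}(\mathbf{x}^{(0)})$ and therefore $\bigcup_{n\geq 0} f^{-n}_{\theta,q,3}(\mathcal{A}_0)\subset\mathfrak{B}(\mathbf{x}^{(0)})$. The reverse inclusion is immediate: if $f^{n}_{\theta,q,3}(x)\to\mathbf{x}^{(0)}$ then, since $\mathcal{A}_0$ is a neighbourhood of $\mathbf{x}^{(0)}$, some iterate lies in $\mathcal{A}_0$, i.e.\ $x\in f^{-n}_{\theta,q,3}(\mathcal{A}_0)$ for that $n$. This gives the claimed equality.

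Next, using the decomposition
\[
\textup{\textbf{Dom}}\{f_{\theta,q,3}\}=\mathcal{A}_0\cup\mathcal{A}_1\cup\mathcal{A}_2\cup\mathcal{A}_3\cup\mathcal{A}_{0,\infty}\cup\mathcal{A}_{1,\infty}\cup\mathcal{A}_{2,3,\infty}\cup\mathcal{A}_{\infty}\cup\mathcal{C}_1\cup\mathcal{C}_2\cup\mathcal{C}_3
\]
recorded just before Proposition \ref{p=1mod3}, removing $\mathcal{C}_1\cup\mathcal{C}_2\cup\mathcal{C}_3$ leaves precisely the eight $\mathcal{A}$-sets. By Proposition \ref{p=1mod3}(i)--(iii), each of these eight sets is contained in $f^{-n}_{\theta,q,3}(\mathcal{A}_0)$ for some $n\in\{0,1,2\}$: the four sets $\mathcal{A}_0,\mathcal{A}_1,\mathcal{A}_2,\mathcal{A}_{0,\infty}$ map into $\mathcal{A}_0$ in one step, while $\mathcal{A}_\infty,\mathcal{A}_3,\mathcal{A}_{2,3,\infty},\mathcal{A}_{1,\infty}$ land in $\mathcal{A}_0$ in two steps. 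Taking the union yields
\[
\textup{\textbf{Dom}}\{f_{\theta,q,3}\}\setminus(\mathcal{C}_1\cup\mathcal{C}_2\cup\mathcal{C}_3)\subset\bigcup_{n=0}^{+\infty} f^{-n}_{\theta,q,3}(\mathcal{A}_0),
\]
and combining with the first step completes the proof.

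There is no serious obstacle; the only subtlety is cosmetic, namely verifying that the contraction estimate used in Proposition \ref{p=1mod3}(i) really delivers the geometric factor $\lambda^n$ (and not merely a fixed upper bound $|\theta-1|_p$), so that one actually gets convergence of every orbit in $\mathcal{A}_0$ to $\mathbf{x}^{(0)}$. Once this is noted, the statement follows by bookkeeping.
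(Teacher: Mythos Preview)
Your proposal is correct and follows essentially the same route as the paper's proof: establish the contraction on $\mathcal{A}_0$ with ratio $|\theta-1|_p/|q|_p$ to get $\mathcal{A}_0\subset\mathfrak{B}(\mathbf{x}^{(0)})$ and hence the equality $\mathfrak{B}(\mathbf{x}^{(0)})=\bigcup_{n\ge 0}f^{-n}_{\theta,q,3}(\mathcal{A}_0)$, then invoke Proposition~\ref{p=1mod3}(i)--(iii) on the domain decomposition to place every $\mathcal{A}$-piece in $f^{-1}(\mathcal{A}_0)\cup f^{-2}(\mathcal{A}_0)$. Your write-up is in fact slightly more explicit than the paper's about both inclusions in the equality, but the argument is the same.
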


\begin{proof}
Let $x\in\mathcal{A}_0.$ It follows from Proposition \ref{p=1mod3} (i) that $$|f_{\theta, q, 3}(x)-\mathbf{x}^{(0)}|_p \leq \frac{|\theta-1|_p}{|q|_p}|x-\mathbf{x}^{(0)}|_p.$$ This implies that $f_{\theta, q, 3}\left(\mathbb{S}_r\left(\mathbf{x}^{(0)}\right)\right)\subset \mathbb{B}_{\frac{r|\theta-1|_p}{|q|_p}}\left(\mathbf{x}^{(0)}\right)$ where $\mathbb{S}_{r}\left(\mathbf{x}^{(0)}\right)$ is the sphere of radius $r$ centered at  $\mathbf{x}^{(0)}$. Observing $\frac{|\theta-1|_p}{|q|_p}<1$, we have in particular,  $f_{\theta, q, 3}\left(\mathcal{A}_0\right)\subset \mathcal{A}_0$. Let $x\in \mathfrak{B}\left(\mathbf{x}^{(0)}\right).$ Then there exist some $n_0\in\mathbb{N}$ and sufficiently small $r>0$ such that $f_{\theta, q, 3}^{n_0}(x)\in\mathbb{S}_r(\mathbf{x}^{(0)})\subset \mathcal{A}_0.$ This means that $x\in f_{\theta,q,3}^{-n_0}\left(\mathcal{A}_0\right)$ or equivalently
$$\mathfrak{B}\left(\mathbf{x}^{(0)} \right) = \bigcup^{+\infty}_{n = 0}f_{\theta,q,3}^{-n}\left(\mathcal{A}_0\right).$$
Moreover, due to Proposition  \ref{p=1mod3} (i)-(iii), we have
$$
\textup{\textbf{Dom}}\{f_{\theta,q,3}\}\setminus \left(\mathcal{C}_1 \cup \mathcal{C}_{2}\cup \mathcal{C}_{3}\right) \subset f_{\theta,q,3}^{-1}\left(\mathcal{A}_0\right)\cup f_{\theta,q,3}^{-2}\left(\mathcal{A}_0\right)\subset \mathfrak{B}\left(\mathbf{x}^{(0)} \right). 
$$
\end{proof}

\subsection{Chaotic behavior near the repelling fixed points} Let us consider the sets $\mathcal{C}_1, \mathcal{C}_{2}, \mathcal{C}_{3}$.

\begin{proposition}\label{ratiooffandx}
The following equality holds.
\begin{equation}\label{fxfx}
\left|f_{\theta, q, 3}(\bar{x}) -f_{\theta, q, 3}(\bar{\bar{x}})\right|_p=
\begin{cases}
\frac{|q|_p}{|\theta-1|_p}|\bar{x}-\bar{\bar{x}}|_p & \textup{if} \ \ \bar{x},\bar{\bar{x}}\in\mathcal{C}_1 \\
\frac{1}{|q|_p|\theta-1|_p}|\bar{x}-\bar{\bar{x}}|_p & \textup{if} \ \ \bar{x},\bar{\bar{x}}\in\mathcal{C}_2 \\
\frac{1}{|q|_p|\theta-1|_p}|\bar{x}-\bar{\bar{x}}|_p & \textup{if} \ \ \bar{x},\bar{\bar{x}}\in\mathcal{C}_3.
\end{cases}
\end{equation}
\end{proposition}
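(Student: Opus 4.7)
The plan is to factor $f_{\theta,q,3}(\bar x) - f_{\theta,q,3}(\bar{\bar x})$ explicitly and then estimate each factor $p$-adically on the three balls $\mathcal C_1, \mathcal C_2, \mathcal C_3$. Introduce the Möbius factor $u(x) := \frac{\theta x+q-1}{x-\mathbf{x}^{(\infty)}}$, so that $f_{\theta,q,3}(x) = u(x)^3$. Clearing denominators in $u(\bar x)-u(\bar{\bar x})$ gives
$$u(\bar x) - u(\bar{\bar x}) = \frac{(\theta-1)(\theta-1+q)(\bar x-\bar{\bar x})}{(\bar x-\mathbf{x}^{(\infty)})(\bar{\bar x}-\mathbf{x}^{(\infty)})},$$
and then factoring a difference of cubes yields
$$f_{\theta,q,3}(\bar x) - f_{\theta,q,3}(\bar{\bar x}) = \bigl(u(\bar x) - u(\bar{\bar x})\bigr)\bigl(u(\bar x)^2 + u(\bar x)u(\bar{\bar x}) + u(\bar{\bar x})^2\bigr).$$

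Taking $p$-adic norms, the leading rational factor is easy: $|\theta-1|_p<|q|_p$ forces $|\theta-1+q|_p=|q|_p$, and on $\mathcal C_1$ (resp.\ $\mathcal C_2, \mathcal C_3$) the defining inequalities give $|\bar x-\mathbf{x}^{(\infty)}|_p=|\bar{\bar x}-\mathbf{x}^{(\infty)}|_p=|\theta-1|_p$ (resp.\ $|q|_p|\theta-1|_p$). This immediately produces the scaling factors $|q|_p/|\theta-1|_p$ on $\mathcal C_1$ and $1/(|q|_p|\theta-1|_p)$ on $\mathcal C_2, \mathcal C_3$ claimed in \eqref{fxfx}.

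What remains is the key identity $|u(\bar x)^2 + u(\bar x)u(\bar{\bar x}) + u(\bar{\bar x})^2|_p = 1$ on each $\mathcal C_i$. I would exploit the fixed-point relation $u(\mathbf{x}^{(i)})^3=\mathbf{x}^{(i)}$ together with $|\mathbf{x}^{(i)}|_p=1$ (property (iii)) to conclude $u(\mathbf{x}^{(i)})\in\mathbb{Z}_p^*$. Applying the same difference formula to $x$ and $\mathbf{x}^{(i)}$ and inserting the known norms $|\mathbf{x}^{(i)}-\mathbf{x}^{(\infty)}|_p$ from properties (vi)--(vii) shows that $|u(x)-u(\mathbf{x}^{(i)})|_p<1$ for all $x\in\mathcal C_i$. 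Writing $u(\bar x)=u(\mathbf{x}^{(i)})+\varepsilon_1$ and $u(\bar{\bar x})=u(\mathbf{x}^{(i)})+\varepsilon_2$ with $|\varepsilon_j|_p<1$ and expanding, one obtains $3u(\mathbf{x}^{(i)})^2$ plus an error of norm strictly less than $1$. Since the hypothesis $p\equiv 1 \pmod 3$ forces $|3|_p=1$, the principal term has norm $1$ and the strong triangle inequality concludes the argument.

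The main obstacle I anticipate is the clean, uniform bookkeeping across the three balls: the hypothesis $p\equiv 1 \pmod 3$ plays a double role (ensuring both the three repelling fixed points from Proposition \ref{structureroot} and $|3|_p=1$), and one must verify the crucial bound $|u(x)-u(\mathbf{x}^{(i)})|_p<1$ separately for $\mathcal C_1$ versus $\mathcal C_2,\mathcal C_3$ because the radii of these balls differ by a factor of $|q|_p$. Once these case-wise norm estimates are in place, the final assertion is just the strong triangle inequality applied to the factorization above.
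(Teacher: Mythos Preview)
Your proposal is correct, and the underlying factorization is exactly the one the paper uses: their $F_{\theta,q,3}(\bar x,\bar{\bar x})$ is nothing but your $u(\bar x)^2+u(\bar x)u(\bar{\bar x})+u(\bar{\bar x})^2$ once one writes $u(x)=\theta+\frac{(\theta-1)(1-\theta-q)}{x-\mathbf{x}^{(\infty)}}$ (cf.\ \eqref{IsingPottsSingular}), and the rational prefactor is the same up to the harmless sign $(\theta-1+q)=-(1-\theta-q)$.

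Where you diverge is in the proof that $|F_{\theta,q,3}|_p=1$. The paper treats $\mathcal C_1$ by bounding the two lower-order groups of terms in $F_{\theta,q,3}$ directly against the leading $3\theta^2$, and treats $\mathcal C_2,\mathcal C_3$ by a mod-$p$ computation: setting $y=\frac{x-\mathbf{x}^{(\infty)}}{\theta-1}$ and invoking the congruence \eqref{congruenty2y3} from Proposition~\ref{structureroot} to show $(y_0^{(i)})^2 F_{\theta,q,3}\equiv 3\bigl(y_0^{(i)}+(1-\theta-q)^*\bigr)^2\not\equiv 0\pmod p$. Your argument instead expands uniformly around $u(\mathbf{x}^{(i)})$, using only that $u(\mathbf{x}^{(i)})^3=\mathbf{x}^{(i)}\in\mathbb{Z}_p^*$ and the easy bound $|u(x)-u(\mathbf{x}^{(i)})|_p<1$ on $\mathcal C_i$; this avoids the explicit congruence bookkeeping and handles all three balls by the same two-line expansion. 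Both approaches rely on $|3|_p=1$ at the final step, but yours is the more conceptual route and does not need to reach back to Proposition~\ref{structureroot}.
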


\begin{proof}
We have
\begin{equation}\label{distancexy}
f_{\theta, q, 3}(\bar{x}) -f_{\theta, q, 3}(\bar{\bar{x}})=
\frac{(\theta-1)(1-\theta-q)(\bar{\bar{x}}-\bar{x})}{(\bar{x}-\mathbf{x}^{\infty})(\bar{\bar{x}}-\mathbf{x}^{\infty})}F_{\theta, q, 3}\left(\bar{x},\bar{\bar{x}}\right)
\end{equation}
where
\begin{multline*}
F_{\theta, q, 3}(\bar{x},\bar{\bar{x}})=3\theta^2+3\theta(1-\theta-q)\left(\frac{\theta-1}{\bar{x}-\mathbf{x}^{\infty}}+\frac{\theta-1}{\bar{\bar{x}}-\mathbf{x}^{\infty}}\right),\\
+(1-\theta-q)^2\left(\frac{(\theta-1)^2}{\left(\bar{x}-\mathbf{x}^{\infty}\right)^2}+\frac{(\theta-1)^2}{\left(\bar{x}-\mathbf{x}^{\infty}\right)\left(\bar{\bar{x}}-\mathbf{x}^{\infty}\right)}+\frac{(\theta-1)^2}{\left(\bar{\bar{x}}-\mathbf{x}^{\infty}\right)^2}\right).
\end{multline*}
Let $\bar{x},\bar{\bar{x}}\in\mathcal{C}_1.$ Then
\begin{eqnarray*}
\left|3\theta(1-\theta-q)\left(\frac{\theta-1}{\bar{x}-\mathbf{x}^{\infty}}+\frac{\theta-1}{\bar{\bar{x}}-\mathbf{x}^{\infty}}\right)\right|_p\leq |q|_p,\\
\left|(1-\theta-q)^2\left(\frac{(\theta-1)^2}{\left(\bar{x}-\mathbf{x}^{\infty}\right)^2}+\frac{(\theta-1)^2}{\left(\bar{x}-\mathbf{x}^{\infty}\right)\left(\bar{\bar{x}}-\mathbf{x}^{\infty}\right)}+\frac{(\theta-1)^2}{\left(\bar{\bar{x}}-\mathbf{x}^{\infty}\right)^2}\right) \right|_p\leq|q|_p^2\\
\end{eqnarray*}
which imply that $|F_{\theta, q, 3}(\bar{x},\bar{\bar{x}})|_p=1$.
Thus, $\left|f_{\theta, q, 3}(\bar{x}) -f_{\theta, q, 3}(\bar{\bar{x}})\right|_p=\frac{|q|_p}{|\theta-1|_p}|\bar{x}-\bar{\bar{x}}|_p.$ Let $\bar{x},\bar{\bar{x}}\in\mathcal{C}_i$ for $i=2,3.$ Then
\begin{eqnarray*}
\frac{\mathbf{x}^{(i)}-\mathbf{x}^{\infty}}{\theta-1}=\mathbf{y}^{i}=\frac{y^{(i)}_0+y^{(i)}_1p+\dots}{|q|_p}, \quad i=2,3,
\end{eqnarray*}
where $y^{(2)}_0$ and $y^{(3)}_0$ are roots of the following congruent equation
\begin{equation*}
3t^2+3(1-\theta-q)^*t+\left((1-\theta-q)^*\right)^2 \equiv 0 \ (mod \ p).
\end{equation*}
Since $\bar{x},\bar{\bar{x}}\in\mathcal{C}_i$ for $i=2,3,$ we obtain
\begin{eqnarray*}
\left|\frac{\bar{x}-\mathbf{x}^{\infty}}{\theta-1}\right|_p=\left|\frac{\bar{\bar{x}}-\mathbf{x}^{\infty}}{\theta-1}\right|_p&=&|q|_p,\\
\left(\frac{\bar{x}-\mathbf{x}^{\infty}}{\theta-1}\right)^{*}=\left(\frac{\bar{\bar{x}}-\mathbf{x}^{\infty}}{\theta-1}\right)^{*}&\equiv& y^{(i)}_0 \ (mod \ p).
\end{eqnarray*}  
Then
\begin{multline*}
	F_{\theta, q, 3}(\bar{x},\bar{\bar{x}})=3\theta^2+3\theta(1-\theta-q)^{*}\left(\left(\frac{\theta-1}{\bar{x}-\mathbf{x}^{\infty}}\right)^{*}+\left(\frac{\theta-1}{\bar{\bar{x}}-\mathbf{x}^{\infty}}\right)^{*}\right)\\
	+\left((1-\theta-q)^{*}\right)^2\left(\left(\left(\frac{\theta-1}{\bar{x}-\mathbf{x}^{\infty}}\right)^{*}\right)^2+\left(\frac{\theta-1}{\bar{x}-\mathbf{x}^{\infty}}\right)^{*}\left(\frac{\theta-1}{\bar{\bar{x}}-\mathbf{x}^{\infty}}\right)^{*}+\left(\left(\frac{\theta-1}{\bar{x}-\mathbf{x}^{\infty}}\right)^{*}\right)^2\right),
\end{multline*}
and hence
\begin{eqnarray*}
\left(y^{(i)}_0\right)^2F_{\theta, q, 3}(\bar{x},\bar{\bar{x}})&\equiv& 3\left(y^{(i)}_0\right)^2+6(1-\theta-q)^{*}\left(y^{(i)}_0\right)+3\left((1-\theta-q)^{*}\right)^2\\
&\equiv& 3\left(\left(y^{(i)}_0\right)+(1-\theta-q)^{*}\right)^2\not\equiv 0 \ ({\rm mod} \ p).
\end{eqnarray*}
This implies $|F_{\theta, q, 3}(\bar{x},\bar{\bar{x}})|_p=1$. Therefore, $\left|f_{\theta, q, 3}(\bar{x}) -f_{\theta, q, 3}(\bar{\bar{x}})\right|_p=\frac{|\bar{x}-\bar{\bar{x}}|_p}{|q|_p|\theta-1|_p}$ for any $\bar{x},\bar{\bar{x}}\in\mathcal{C}_i, \ i=2,3.$
\end{proof}

To continue the study, we distinguish two cases, $0<|\theta-1|_p < |q|^2_p < 1$ and $0 < |q|^2_p\leq |\theta-1|_p < |q|_p < 1$. Throughout this part, we always suppose $$r=|q|_p|\theta-1|_p.$$
\subsubsection{Case $0<|\theta-1|_p < |q|^2_p < 1$}

For $i=1,2,3$, we consider disjoint open balls $\mathbb{B}_r\left(\mathbf{x}^{(i)}\right)$. It is clear that $\mathbb{B}_r\left(\mathbf{x}^{(1)}\right) \subsetneq \mathcal{C}_{1}$ and $\mathbb{B}_r\left(\mathbf{x}^{(i)}\right)=\mathcal{C}_i$ for $i=2,3$. We define the set $$\mathbb{X}=\mathbb{B}_r\left(\mathbf{x}^{(1)}\right)\cup\mathbb{B}_r\left(\mathbf{x}^{(2)}\right)\cup\mathbb{B}_r\left(\mathbf{x}^{(3)}\right).$$
Then $\left(\mathcal{C}_1\cup\mathcal{C}_2\cup\mathcal{C}_3\right)\setminus\mathbb{X}=\mathcal{C}_1\setminus\mathbb{B}_r\left(\mathbf{x}^{(1)}\right)$.

\begin{proposition}\label{propertyofX}
Let $0<|\theta-1|_p < |q|^2_p < 1.$ Then following inclusions hold:
\begin{itemize}
	\item[(i)] for all $i=1,2,3,\ $ $\mathbb{X}\subset \mathcal{C}_1\cup\mathcal{C}_2\cup\mathcal{C}_3\subset f_{\theta, q, 3}\left(\mathbb{B}_r\left(\mathbf{x}^{(i)}\right)\right)$;
	\item[(ii)] $f_{\theta, q, 3}\left(\mathcal{C}_1\setminus\mathbb{B}_r\left(\mathbf{x}^{(1)}\right)\right)\subset \mathbb{Q}_p\setminus\left(\mathcal{C}_1\cup\mathcal{C}_2\cup\mathcal{C}_3\right);$  
	\item[(iii)] $f^{-1}_{\theta, q, 3}\left(\mathbb{X}\right)\subset \mathbb{X}.$
\end{itemize}
\end{proposition}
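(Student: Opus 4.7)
The plan is to exploit the explicit scaling identities of Proposition \ref{ratiooffandx}, which say that $f_{\theta,q,3}$ acts on $\mathcal{C}_1$ as a local dilation by $|q|_p/|\theta-1|_p$ and on $\mathcal{C}_2,\mathcal{C}_3$ by $1/(|q|_p|\theta-1|_p)$. Since each $\mathbf{x}^{(i)}$ is fixed and lies in $\mathcal{C}_i$, I will upgrade these identities via Lemma \ref{scaling} to the ball-to-ball bijections
\[
f_{\theta,q,3}\bigl(\mathbb{B}_r(\mathbf{x}^{(1)})\bigr)=\mathbb{B}_{|q|_p^{2}}(\mathbf{x}^{(1)}), \qquad f_{\theta,q,3}\bigl(\mathbb{B}_r(\mathbf{x}^{(i)})\bigr)=\mathbb{B}_{1}(\mathbf{x}^{(i)}),\ i=2,3.
\]
These three identities are the workhorse of the whole argument.

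For part (i), the inclusion $\mathbb{X}\subset\mathcal{C}_1\cup\mathcal{C}_2\cup\mathcal{C}_3$ is immediate from $\mathbb{B}_r(\mathbf{x}^{(1)})\subsetneq\mathcal{C}_1$ and $\mathbb{B}_r(\mathbf{x}^{(i)})=\mathcal{C}_i$ for $i=2,3$. To prove $\mathcal{C}_1\cup\mathcal{C}_2\cup\mathcal{C}_3\subset f_{\theta,q,3}(\mathbb{B}_r(\mathbf{x}^{(i)}))$ for each $i$, I will combine the image computations above with the pairwise distances (vi)--(ix) listed before the proposition: under the standing assumption $|\theta-1|_p<|q|_p^{2}$, the strong triangle inequality gives $|x-\mathbf{x}^{(1)}|_p\leq|\theta-1|_p<|q|_p^{2}$ for every $x\in\mathcal{C}_1\cup\mathcal{C}_2\cup\mathcal{C}_3$, handling $i=1$; the cases $i=2,3$ will follow even more easily since all of $\mathcal{C}_1\cup\mathcal{C}_2\cup\mathcal{C}_3$ sits in $\mathbb{B}_1(\mathbf{x}^{(i)})$ thanks to $|\theta-1|_p<1$.

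For (ii), I will take $x\in\mathcal{C}_1\setminus\mathbb{B}_r(\mathbf{x}^{(1)})$, so that $r\leq|x-\mathbf{x}^{(1)}|_p<|\theta-1|_p$, and apply the scaling on $\mathcal{C}_1$ to obtain $|f_{\theta,q,3}(x)-\mathbf{x}^{(1)}|_p\in[|q|_p^{2},|q|_p)$. Because $|q|_p^{2}>|\theta-1|_p=|\mathbf{x}^{(1)}-\mathbf{x}^{(i)}|_p$ for $i=2,3$, the strong triangle inequality then forces $|f_{\theta,q,3}(x)-\mathbf{x}^{(i)}|_p=|f_{\theta,q,3}(x)-\mathbf{x}^{(1)}|_p\geq|q|_p^{2}$, which exceeds the radius of every $\mathcal{C}_j$ and so keeps $f_{\theta,q,3}(x)$ outside $\mathcal{C}_1\cup\mathcal{C}_2\cup\mathcal{C}_3$. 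Part (iii) will then fall out by a degree count: $f_{\theta,q,3}$ is a rational map of degree $3$, so every $y\in\mathbb{Q}_p$ has at most three preimages. Part (i) already produces, via the bijections above, exactly one preimage of each $y\in\mathbb{X}$ inside each of the three pairwise disjoint balls $\mathbb{B}_r(\mathbf{x}^{(i)})\subset\mathbb{X}$; those three preimages exhaust the degree, hence no preimage can lie outside $\mathbb{X}$.

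The main obstacle I anticipate is the bookkeeping in part (i): three overlapping inclusions ($\mathcal{C}_1\cup\mathcal{C}_2\cup\mathcal{C}_3$ inside three different image balls) must be checked simultaneously, and each hinges on juggling the radii $r$, $|\theta-1|_p$, $|q|_p|\theta-1|_p$, $|q|_p$ and $|q|_p^{2}$ in just the right order, using the case hypothesis $|\theta-1|_p<|q|_p^{2}$ at precisely the step that separates this subcase from $|q|_p^{2}\leq|\theta-1|_p$.
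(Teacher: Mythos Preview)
Your arguments for (i) and (ii) are essentially identical to the paper's: both use the scaling identities of Proposition~\ref{ratiooffandx} together with Lemma~\ref{scaling} to identify $f_{\theta,q,3}(\mathbb{B}_r(\mathbf{x}^{(1)}))=\mathbb{B}_{|q|_p^2}(\mathbf{x}^{(1)})$ and $f_{\theta,q,3}(\mathbb{B}_r(\mathbf{x}^{(i)}))=\mathbb{B}_1(\mathbf{x}^{(i)})$ for $i=2,3$, and then finish with the ultrametric triangle inequality and the distance list (vi)--(ix).

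Your proof of (iii), however, takes a genuinely different route. The paper argues by complements: Proposition~\ref{p=1mod3} shows that $f_{\theta,q,3}$ sends $\textup{\textbf{Dom}}\{f_{\theta,q,3}\}\setminus(\mathcal{C}_1\cup\mathcal{C}_2\cup\mathcal{C}_3)$ outside $\mathcal{C}_1\cup\mathcal{C}_2\cup\mathcal{C}_3$, and part (ii) disposes of the remaining piece $\mathcal{C}_1\setminus\mathbb{B}_r(\mathbf{x}^{(1)})$; together these give $f_{\theta,q,3}(\textup{\textbf{Dom}}\{f_{\theta,q,3}\}\setminus\mathbb{X})\subset\mathbb{Q}_p\setminus\mathbb{X}$. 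You instead run a global degree count: the three bijections from (i) already supply three distinct preimages of each $y\in\mathbb{X}$ inside the pairwise disjoint balls $\mathbb{B}_r(\mathbf{x}^{(i)})$, and since $f_{\theta,q,3}$ has degree $3$ (indeed $\theta^3\in\mathcal{A}_0$, so the leading coefficient $\theta^3-y$ never vanishes for $y\in\mathbb{X}$), these exhaust the fiber. Your approach is cleaner and entirely self-contained---it never invokes the long casework of Proposition~\ref{p=1mod3}---whereas the paper's route has the advantage of tracking explicitly where the complement of $\mathbb{X}$ lands, information that is reused later in proving the basin decomposition of Theorem~\ref{decompositionofQp}(ii).
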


\begin{proof}

(i) It is clear that $\mathbb{B}_r\left(\mathbf{x}^{(i)}\right)\subseteq \mathcal{C}_i$ for $i=1,2,3.$ So, $\mathbb{X} \subset \mathcal{C}_1\cup\mathcal{C}_2\cup\mathcal{C}_3$. We need only show  $\mathcal{C}_1\cup\mathcal{C}_2\cup\mathcal{C}_3\subset f_{\theta, q, 3}\left(\mathbb{B}_r\left(\mathbf{x}^{(i)}\right)\right)$ for $i=1,2,3.$

By \eqref{fxfx}, for any $x\in \mathbb{B}_r\left(\mathbf{x}^{(1)}\right)$ we have
$\left|f_{\theta, q, 3}(x)-\mathbf{x}^{(1)}\right|_p=\frac{|q|_p}{|\theta-1|_p}|x-\mathbf{x}^{(1)}|_p$. Thus $f_{\theta, q, 3}\left(\mathbb{B}_r\left(\mathbf{x}^{(1)}\right)\right)=\mathbb{B}_{|q|^2}\left(\mathbf{x}^{(1)}\right)$. Since $|\mathbf{x}^{(1)}-\mathbf{x}^{(2)}|_p=|\mathbf{x}^{(1)}-\mathbf{x}^{(3)}|_p=|\theta-1|_p<|q|^2_p,$ we have  $\mathbf{x}^{(1)}, \mathbf{x}^{(2)},$ $\mathbf{x}^{(3)}\in \mathbb{B}_{|q|^2_p}\left(\mathbf{x}^{(1)}\right).$ Since $|x-\mathbf{x}^{(1)}|_p\leq |\theta-1|_p<|q|_p^2$ for any $x\in\mathcal{C}_1\cup\mathcal{C}_2\cup\mathcal{C}_3,$ we obtain $\mathcal{C}_1\cup\mathcal{C}_2\cup\mathcal{C}_3\subset f_{\theta, q, 3}\left(\mathbb{B}_{r}\left(\mathbf{x}^{(1)}\right)\right).$

By \eqref{fxfx}, for $i=2,3$ and any $x\in \mathbb{B}_r\left(\mathbf{x}^{(i)}\right)$ we have $\left|f_{\theta, q, 3}(x)-\mathbf{x}^{(i)}\right|_p=\frac{|x-\mathbf{x}^{(i)}|_p}{|q|_p|\theta-1|_p}$. Then $f_{\theta, q, 3}\left(\mathbb{B}_r\left(\mathbf{x}^{(i)}\right)\right)=\mathbb{B}_{1}\left(\mathbf{x}^{(i)}\right), \ \ i=2,3$. Since $|\mathbf{x}^{(1)}-\mathbf{x}^{(2)}|_p=|\mathbf{x}^{(1)}-\mathbf{x}^{(3)}|_p=|\theta-1|_p<1,$ and $|\mathbf{x}^{(2)}-\mathbf{x}^{(3)}|_p=|q|_p|\theta-1|_p<1,$ we deduce that $\mathbf{x}^{(1)}, \mathbf{x}^{(2)}, \mathbf{x}^{(3)}\in \mathbb{B}_{1}\left(\mathbf{x}^{(i)}\right)$ for  $i=2,3.$ Since $|x-\mathbf{x}^{(i)}|_p\leq |\theta-1|_p<1$ for any $x\in\mathcal{C}_1\cup\mathcal{C}_2\cup\mathcal{C}_3,$ we conclude $\subset\mathcal{C}_1\cup\mathcal{C}_2\cup\mathcal{C}_3\subset f_{\theta, q, 3}\left(\mathbb{B}_r\left(\mathbf{x}^{(i)}\right)\right)$ for  $i=2,3.$

(ii) Let $x\in \mathcal{C}_1\setminus\mathbb{B}_r\left(\mathbf{x}^{(1)}\right)$. Note $$\mathcal{C}_1\setminus\mathbb{B}_r\left(\mathbf{x}^{(1)}\right)=\left\{ x \in \mathbb{Q}_p : |q|_p|\theta-1|_p\leq\left|x-\mathbf{x}^{(1)}\right|_p < \left|x-\mathbf{x}^{(\infty)}\right|_p = |\theta-1|_p \right\}.$$ By \eqref{fxfx} and the fact $|\mathbf{x}^{(1)}-\mathbf{x}^{(2)}|_p=|\mathbf{x}^{(1)}-\mathbf{x}^{(3)}|_p=|\theta-1|_p$, we have
$$
|q|_p|\theta-1|_p<|\theta-1|_p<|q|_p^2\leq\left|f_{\theta, q, 3}(x)-\mathbf{x}^{(1)}\right|_p=\frac{|q|_p|x-\mathbf{x}^{(1)}|_p}{|\theta-1|_p}<|q|_p
$$ and
$$
|q|_p^2\leq\left|f_{\theta, q, 3}(x)-\mathbf{x}^{(i)}\right|_p=\left|f_{\theta, q, 3}(x)-\mathbf{x}^{(1)}+\mathbf{x}^{(1)}-\mathbf{x}^{(i)}\right|_p=\left|f_{\theta, q, 3}(x)-\mathbf{x}^{(1)}\right|_p<|q|_p
$$
for $i=2,3$. This means that $f_{\theta, q, 3}(x)\not\in \mathcal{C}_1\cup\mathcal{C}_2\cup\mathcal{C}_3.$

(iii) The inclusion $f^{-1}_{\theta, q, 3}\left(\mathbb{X}\right)\subset \mathbb{X}$ follows from Proposition \ref{p=1mod3} and (ii) which $$f_{\theta,q,3}\left(\textup{\textbf{Dom}}\{f_{\theta,q,3}\}\setminus \mathbb{X}\right) \subset \mathbb{Q}_p\setminus \mathbb{X}.$$
\end{proof}

We define the Julia set of the Potts--Bethe mapping as follows
\begin{equation*}
\mathcal{J}_0 = \bigcap_{n=0}^{\infty}f^{-n}_{\theta, q, 3}\left(\mathbb{X}\right).
\end{equation*}

\begin{theorem}\label{decompositionofQp}
Let $0<|\theta-1|_p < |q|^2_p < 1.$ Then,
\begin{itemize}
\item[(i)] the dynamical system $f_{\theta, q, 3}:\mathcal{J}_0\to\mathcal{J}_0$ is isometrically conjugate to the full shift $(\Sigma_3,\sigma,d_{f_{\theta, q, 3}})$ over an alphabet of three symbols;
\item[(ii)] one has $$\mathbb{Q}_p=\mathfrak{B}\left( \mathbf{x}^{(0)} \right)\cup \mathcal{J}_0\cup \bigcup\limits_{n=0}^{+\infty}f_{\theta, q, 3}^{-n}\{\mathbf{x}^{(\infty)}\}.$$
\end{itemize}
\end{theorem}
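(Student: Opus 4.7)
The plan for part (i) is to apply Theorem \ref{conjugacytheorem} directly with $X = \mathbb{X} = \mathbb{B}_r(\mathbf{x}^{(1)}) \cup \mathbb{B}_r(\mathbf{x}^{(2)}) \cup \mathbb{B}_r(\mathbf{x}^{(3)})$, $r = |q|_p|\theta-1|_p$. I would check the hypotheses of the theorem in turn. First, $\mathbb{X}$ is already written as a disjoint union of three balls of the same radius $r$ (disjointness comes from the computed distances $|\mathbf{x}^{(i)}-\mathbf{x}^{(j)}|_p$ between fixed points, all strictly larger than $r$). The invariance $f^{-1}_{\theta,q,3}(\mathbb{X}) \subset \mathbb{X}$ is exactly Proposition \ref{propertyofX}(iii). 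The local scaling condition \eqref{2.1} holds by Proposition \ref{ratiooffandx}: on $\mathbb{B}_r(\mathbf{x}^{(1)}) \subset \mathcal{C}_1$ the scaling factor is $|q|_p/|\theta-1|_p = p^{\tau_1}$ with $\tau_1 = ord_p(\theta-1)-ord_p(q) > 0$, while on $\mathbb{B}_r(\mathbf{x}^{(i)}) = \mathcal{C}_i$ for $i=2,3$ it is $(|q|_p|\theta-1|_p)^{-1} = p^{\tau_i}$ with $\tau_i = ord_p(q)+ord_p(\theta-1) > 0$. Since all $\tau_j>0$, the triple $(\mathbb{X}, \mathcal{J}_0, f_{\theta,q,3})$ is a $p$-adic repeller.

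Next I would compute the incidence matrix. From Proposition \ref{propertyofX}(i), $\mathbb{X} \subset \mathcal{C}_1 \cup \mathcal{C}_2 \cup \mathcal{C}_3 \subset f_{\theta,q,3}(\mathbb{B}_r(\mathbf{x}^{(i)}))$ for each $i=1,2,3$, so $\mathbb{B}_r(\mathbf{x}^{(j)}) \subset f_{\theta,q,3}(\mathbb{B}_r(\mathbf{x}^{(i)}))$ for every pair $(i,j) \in \{1,2,3\}^2$. Thus the incidence matrix is the $3\times 3$ all-ones matrix, which is trivially irreducible, so $(\mathbb{X}, \mathcal{J}_0, f_{\theta,q,3})$ is a transitive $p$-adic repeller. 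Theorem \ref{conjugacytheorem} then yields that $(\mathcal{J}_0, f_{\theta,q,3}, |\cdot|_p)$ is isometrically conjugate to $(\Sigma_A, \sigma, d_{f_{\theta,q,3}})$ with $A$ the all-ones matrix, i.e.\ the full shift $(\Sigma_3, \sigma, d_{f_{\theta,q,3}})$ on three symbols.

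For part (ii), the plan is a straightforward case analysis on where an arbitrary $x \in \mathbb{Q}_p$ lies. If $x = \mathbf{x}^{(\infty)}$ it belongs to $f^{-0}_{\theta,q,3}\{\mathbf{x}^{(\infty)}\}$. Otherwise $x \in \textup{\textbf{Dom}}\{f_{\theta,q,3}\}$, and by Theorem \ref{basin} we may assume $x \in \mathcal{C}_1 \cup \mathcal{C}_2 \cup \mathcal{C}_3$. If $x \in \mathcal{C}_1 \setminus \mathbb{B}_r(\mathbf{x}^{(1)})$, Proposition \ref{propertyofX}(ii) gives $f_{\theta,q,3}(x) \in \mathbb{Q}_p \setminus (\mathcal{C}_1 \cup \mathcal{C}_2 \cup \mathcal{C}_3)$, so either $f_{\theta,q,3}(x) = \mathbf{x}^{(\infty)}$ (placing $x$ in $f^{-1}_{\theta,q,3}\{\mathbf{x}^{(\infty)}\}$) or $f_{\theta,q,3}(x) \in \mathfrak{B}(\mathbf{x}^{(0)})$ by Theorem \ref{basin}, which forces $x \in \mathfrak{B}(\mathbf{x}^{(0)})$. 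If $x \in \mathbb{X}$ and $x \notin \mathcal{J}_0$, the definition of $\mathcal{J}_0$ yields a minimal $n \geq 1$ with $f^n_{\theta,q,3}(x) \notin \mathbb{X}$; then $f^n_{\theta,q,3}(x)$ is either outside $\mathcal{C}_1 \cup \mathcal{C}_2 \cup \mathcal{C}_3$ (handled by Theorem \ref{basin}) or lies in $\mathcal{C}_1 \setminus \mathbb{B}_r(\mathbf{x}^{(1)})$ (handled by the preceding case), and in either event $x$ lies in the stated union.

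The argument is essentially a routine assembly of results already built up; no new estimate is required. The only point deserving care is the "minimality" case: one must make sure that points entering $\mathcal{C}_1 \setminus \mathbb{B}_r(\mathbf{x}^{(1)})$ do not reenter $\mathbb{X}$ under a further iterate before escaping, which is precisely the content of Proposition \ref{propertyofX}(ii). This is the main (and essentially only) place where the detailed geometry of the balls is used; once it is invoked, the dichotomy "converge to $\mathbf{x}^{(0)}$ or hit $\mathbf{x}^{(\infty)}$" for non-Julia orbits follows immediately.
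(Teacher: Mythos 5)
Your proof is correct and follows essentially the same route as the paper: part (i) assembles Proposition \ref{ratiooffandx} (the scaling estimates), Proposition \ref{propertyofX} (invariance and the all-ones incidence matrix) and Theorem \ref{conjugacytheorem}, and part (ii) is the same escape argument via Theorem \ref{basin} and Proposition \ref{propertyofX}. The only tiny imprecision is the claim that the pairwise distances of the centers are all strictly larger than $r$: in fact $|\mathbf{x}^{(2)}-\mathbf{x}^{(3)}|_p=|q|_p|\theta-1|_p=r$ exactly, but since the balls are open of radius $r$ this still yields disjointness.
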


\begin{proof}

(i) By \eqref{fxfx}, Proposition \ref{propertyofX} and Theorem \ref{conjugacytheorem}, the dynamical system $f_{\theta, q, 3}:\mathcal{J}_0\to\mathcal{J}_0$ is isometrically conjugate to the full shift dynamics $(\Sigma_3,\sigma,d_{f_{\theta, q, 3}})$.

(ii) Due to Theorem \ref{basin}, the following inclusion holds
$$\textup{\textbf{Dom}}\{f_{\theta,q,3}\}\setminus \left(\mathcal{C}_1 \cup \mathcal{C}_{2}\cup \mathcal{C}_{3}\right) \subset \mathfrak{B}\left( \mathbf{x}^{(0)} \right).$$ 

Let $x\in \left(\mathcal{C}_1 \cup \mathcal{C}_{2}\cup \mathcal{C}_{3}\right)\setminus \mathcal{J}_0.$ There exists $n_0$ such that $x\not\in f^{-n_0}_{\theta, q, 3}\left(\mathbb{X}\right)$ or equivalently $f^{n_0}_{\theta, q, 3}(x)\not\in\mathbb{X}.$ Due to Proposition \ref{propertyofX} (iii), we get  $f^{n_0+1}_{\theta, q, 3}(x)\in \mathfrak{B}\left(\mathbf{x}^{(0)}\right)\cup\{\mathbf{x}^{\infty}\}.$ Consequently, we obtain the following decomposition $$\mathbb{Q}_p=\mathfrak{B}\left( \mathbf{x}^{(0)} \right)\cup \mathcal{J}_0\cup \bigcup\limits_{n=0}^{+\infty}f_{\theta, q, 3}^{-n}\{\mathbf{x}^{(\infty)}\}.$$
\end{proof}

\subsubsection{Case $0<|q|_p^2\leq|\theta-1|_p<|q|_p<1$}

Let $m\geq 1$ be the integer such that $$|q|_p^\frac{m+1}{m} \leq |\theta-1|_p < |q|_p^\frac{m+2}{m+1}.$$ We consider for $l=0,1,\dots,m$, the sphere $$\mathcal{G}_l :=\left\{x : \left|x-\mathbf{x}^{(1)} \right| = \frac{|\theta-1|_p^{l+1}}{|q|_p^{l}}\right\}.$$ Recall $r=|q|_p|\theta-1|_p.$ Then $\mathcal{G}_l$ can be written as a union of disjoint balls of radius $r$, i.e., $\mathcal{G}_l = \bigcup\limits_{i=1}^{m_l}\mathbb{B}_{r}(a_{i,l})$ where $a_{i,l} \in \mathcal{G}_l$ and $\mathbb{B}_{r}(a_{i,l}) \cap \mathbb{B}_{r}(a_{j,l}) = \emptyset$ for $1 \leq i \neq j \leq m_l$. Since the sphere $\mathcal{G}_l$ is a union of $p-1$ open balls with radius $\frac{|\theta-1|_p^{l+1}}{|q|_p^{l}},$
 by the following Remark \ref{disjointballs}, $m_l=(p-1)\frac{|\theta-1|_p^l}{|q|_p^{l+1}}$.

\begin{remark}\label{disjointballs}
Let $\gamma\leq \delta$ be two integers. Any open ball $\mathbb{B}_{p^\delta}(a)$ is decomposed into $p^{\delta-\gamma}$ disjoint open balls $\mathbb{B}_{p^\gamma}(a_i)$, i.e., $\mathbb{B}_{p^\delta}(a) = \bigcup\limits_{i=1}^{p^{\delta-\gamma}}\mathbb{B}_{p^\gamma}(a_i)$ where $a_i \in \mathbb{B}_{p^\delta}(a)$, and $|a_i-a_j|_p=p^{\gamma}$ if $a_i\neq a_j$.
\end{remark}

\begin{proposition}\label{1-2-1}
The map $f_{\theta, q, 3}$ is a one-to-one between $\mathcal{C}_1$ and $\mathbb{B}_{|q|_p}\left(\mathbf{x}^{(1)}\right)$. In particular, it is a one-to-one between $\mathcal{G}_l$ and $\mathcal{G}_{l-1}$ for $1\leq l\leq m.$
\end{proposition}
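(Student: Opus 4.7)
The main input is the scaling identity from Proposition \ref{ratiooffandx}: for all $\bar{x},\bar{\bar{x}}\in\mathcal{C}_1$ one has $|f_{\theta,q,3}(\bar{x})-f_{\theta,q,3}(\bar{\bar{x}})|_p=\tfrac{|q|_p}{|\theta-1|_p}|\bar{x}-\bar{\bar{x}}|_p$. The plan is to read this identity as saying that $f_{\theta,q,3}$ is locally scaling on $\mathcal{C}_1$ in the sense of Definition \ref{def:locally-scaling}, with constant scaling factor $S(\mathbf{x}^{(1)})=\tfrac{|q|_p}{|\theta-1|_p}>1$, and then to invoke Lemma \ref{scaling} and Corollary \ref{scaling2}.

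First I would record that $\mathcal{C}_1=\mathbb{B}_{|\theta-1|_p}(\mathbf{x}^{(1)})$. This is immediate from the strong triangle inequality: since $|\mathbf{x}^{(1)}-\mathbf{x}^{(\infty)}|_p=|\theta-1|_p$, any $x$ with $|x-\mathbf{x}^{(1)}|_p<|\theta-1|_p$ automatically satisfies $|x-\mathbf{x}^{(\infty)}|_p=|\theta-1|_p$, and conversely. Injectivity of $f_{\theta,q,3}$ on $\mathcal{C}_1$ is then a direct consequence of the scaling identity. Using the fact that $\mathbf{x}^{(1)}$ is a fixed point (Proposition \ref{fixedpoints}), the scaling identity applied with $\bar{\bar{x}}=\mathbf{x}^{(1)}$ further shows $f_{\theta,q,3}(\mathcal{C}_1)\subset\mathbb{B}_{|q|_p}(\mathbf{x}^{(1)})$.

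Next, to obtain surjectivity onto $\mathbb{B}_{|q|_p}(\mathbf{x}^{(1)})$, I would apply Lemma \ref{scaling} with the one-ball decomposition $X=\mathcal{C}_1=\mathbb{B}_{|\theta-1|_p}(\mathbf{x}^{(1)})$, scaling function constant equal to $\tfrac{|q|_p}{|\theta-1|_p}$, and $r'=|\theta-1|_p$. Since $f_{\theta,q,3}(\mathbf{x}^{(1)})=\mathbf{x}^{(1)}$, the lemma yields a bijection
\[
f_{\theta,q,3}:\mathbb{B}_{|\theta-1|_p}(\mathbf{x}^{(1)})\longrightarrow \mathbb{B}_{|\theta-1|_p\cdot\frac{|q|_p}{|\theta-1|_p}}(\mathbf{x}^{(1)})=\mathbb{B}_{|q|_p}(\mathbf{x}^{(1)}),
\]
which is exactly the first claim.

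Finally, for the ``in particular'' statement, observe that $\mathcal{G}_l=\mathbb{S}_{r_l}(\mathbf{x}^{(1)})$ with $r_l=\frac{|\theta-1|_p^{l+1}}{|q|_p^{l}}$, and that $r_l\leq|\theta-1|_p$ whenever $l\geq 0$, so these spheres lie inside $\mathcal{C}_1$. Applying Corollary \ref{scaling2} with $r'=r_l$, the scaling factor $\tfrac{|q|_p}{|\theta-1|_p}$ transforms the sphere of radius $r_l$ into the sphere of radius $\tfrac{|q|_p}{|\theta-1|_p}\cdot r_l=\frac{|\theta-1|_p^{l}}{|q|_p^{l-1}}=r_{l-1}$, both centered at $\mathbf{x}^{(1)}$. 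Hence $f_{\theta,q,3}$ restricts to a bijection $\mathcal{G}_l\to\mathcal{G}_{l-1}$ for every $1\leq l\leq m$. There is no real obstacle here: the only delicate point is checking that the scaling identity of Proposition \ref{ratiooffandx} really does place us in the setting of Definition \ref{def:locally-scaling} (with the single center $\mathbf{x}^{(1)}$ and a constant scaling function), after which the two bijection statements are immediate from Lemma \ref{scaling} and Corollary \ref{scaling2}, respectively.
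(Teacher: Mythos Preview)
Your proof is correct and follows essentially the same approach as the paper: identify $\mathcal{C}_1$ with the ball $\mathbb{B}_{|\theta-1|_p}(\mathbf{x}^{(1)})$, invoke the scaling identity from Proposition~\ref{ratiooffandx} together with Lemma~\ref{scaling} to get the bijection onto $\mathbb{B}_{|q|_p}(\mathbf{x}^{(1)})$, and then apply Corollary~\ref{scaling2} to the spheres $\mathcal{G}_l$. Your write-up is in fact slightly more careful than the paper's, since you explicitly justify why $\mathcal{C}_1$ is a ball and use that $\mathbf{x}^{(1)}$ is fixed to pin down the center of the image.
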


\begin{proof}
Recall $\mathcal{C}_1=\mathbb{B}_{|\theta-1|_p}\left(\mathbf{x}^{(1)}\right).$ By \eqref{fxfx}, we have scaling on $\mathcal{C}_1$ such that for any $x,y\in\mathcal{C}_1$ \begin{eqnarray*}
\left|f_{\theta, q, 3}(x)-f_{\theta, q, 3}(y)\right|_p&=&\frac{|q|_p}{|\theta-1|_p}|x-y|_p.
\end{eqnarray*}

Thus, by Lemma \ref{scaling}, $f_{\theta,q,3}$ is a one-to-one between $\mathcal{C}_1$ and $\mathbb{B}_{|q|_p}\left(\mathbf{x}^{(1)}\right)$. We have for $1\leq l\leq m,\ \mathcal{G}_l \subset \mathcal{C}_1$ and $f_{\theta,q,3}\left(\mathcal{G}_l\right) \subseteq \mathcal{G}_{l-1}$. Since $\mathcal{G}_l$ are spheres, by Corollary \ref{scaling2}, $f_{\theta,q,3}$ is one-to-one between $\mathcal{G}_l$ and $\mathcal{G}_{l-1}.$
\end{proof}

It is clear that for all $l=1,2,\dots,m,\ \mathcal{G}_l \subset \mathcal{C}_1$. For all $l=1,2,\dots,m,$ we let  $\mathcal{H}_l = \bigcup\limits_{i=1}^{n_l}\mathbb{B}_{r}(a_{i,l}) \subset \mathcal{G}_l$ with $a_{i,l} \in \mathcal{G}_l$ such that for $1 \leq i \neq j \leq n_l,\ \mathbb{B}_{r}(a_{i,l}) \cap \mathbb{B}_{r}(a_{j,l}) = \emptyset$ and $\left(\mathcal{C}_2 \cup \mathcal{C}_3 \right) \subset f_{\theta, q, 3}^{l}\left(\mathbb{B}_{r}(a_{i,l})\right)$.

\begin{proposition}\label{numberballs}
For all $l=1,2,\dots,m$, we have $n_l=1.$
\end{proposition}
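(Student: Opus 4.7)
The idea is to push the problem forward by $f^{l}_{\theta,q,3}$: count, in $\mathcal{G}_0$, how many open balls of the appropriate image radius contain $\mathcal{C}_2 \cup \mathcal{C}_3$, and then pull the count back through the bijection.

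First, I would iterate Proposition~\ref{1-2-1} to conclude that $f^{l}_{\theta,q,3}$ is a bijection from $\mathcal{G}_l$ onto $\mathcal{G}_0$. Next, since $\mathcal{G}_s \subset \mathcal{C}_1$ for every $1 \leq s \leq m$, the iteration stays inside the region where the scaling identity from Proposition~\ref{ratiooffandx} applies; telescoping gives that $f^{l}_{\theta,q,3}$ is locally scaling on $\mathcal{G}_l$ with scaling factor $(|q|_p/|\theta-1|_p)^l$. Lemma~\ref{scaling} then sends every open ball $\mathbb{B}_r(a) \subset \mathcal{G}_l$ bijectively onto an open ball of radius
\begin{equation*}
R \;:=\; r\cdot\left(\frac{|q|_p}{|\theta-1|_p}\right)^{l} \;=\; \frac{|q|_p^{\,l+1}}{|\theta-1|_p^{\,l-1}}
\end{equation*}
centered at $f^{l}_{\theta,q,3}(a)\in \mathcal{G}_0$. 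Hence counting the balls $\mathbb{B}_r(a_{i,l})\subset\mathcal{G}_l$ whose $l$-th iterate contains $\mathcal{C}_2\cup\mathcal{C}_3$ is equivalent to counting the open balls of radius $R$ in $\mathcal{G}_0$ which contain $\mathcal{C}_2\cup\mathcal{C}_3$.

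For the latter I would use two elementary size comparisons. On the one hand, $R/r = (|q|_p/|\theta-1|_p)^{l} > 1$, so $R>r=|\mathbf{x}^{(2)}-\mathbf{x}^{(3)}|_p$; by ultrametricity, any open ball of radius $R$ that contains $\mathbf{x}^{(2)}$ automatically contains $\mathbf{x}^{(3)}$ and must coincide with $\mathbb{B}_R(\mathbf{x}^{(2)})=\mathbb{B}_R(\mathbf{x}^{(3)})$, which then also contains $\mathcal{C}_2=\mathbb{B}_r(\mathbf{x}^{(2)})$ and $\mathcal{C}_3=\mathbb{B}_r(\mathbf{x}^{(3)})$. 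On the other hand, the defining inequality $|q|_p^{(m+1)/m}\leq|\theta-1|_p$ and the constraint $l\leq m$ give $R\leq|\theta-1|_p$, and a standard ultrametric calculation then shows that an open ball of radius $R$ centered at a point of $\mathcal{G}_0$ is entirely contained in $\mathcal{G}_0$. Consequently, $\mathbb{B}_R(\mathbf{x}^{(2)})$ is the unique such ball in $\mathcal{G}_0$, and pulling it back through the bijection of the first step yields a unique $\mathbb{B}_r(a_{1,l})\subset\mathcal{G}_l$, giving $n_l=1$.

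I do not anticipate a serious obstacle: Propositions~\ref{1-2-1} and~\ref{ratiooffandx} already encode the bijection and scaling, so the argument reduces to bookkeeping. The only step requiring care is verifying $R\leq|\theta-1|_p$ precisely when $l\leq m$, which must be tied directly to the defining inequality of $m$; the rest is standard ultrametric reasoning.
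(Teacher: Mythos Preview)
Your proposal is correct and follows essentially the same idea as the paper: use the scaling bijection on $\mathcal{C}_1$ to transport the counting problem to $\mathcal{G}_0$, where $\mathcal{C}_2\cup\mathcal{C}_3$ visibly fits into a single image ball. The only difference is organizational: the paper first treats the case $l=1$ directly (where the image radius is $|q|_p^2$ and the inequality $r=|q|_p|\theta-1|_p<|q|_p^2$ is immediate from $|\theta-1|_p<|q|_p$), and then invokes the one-to-one correspondence of Proposition~\ref{1-2-1} to carry the result to higher $l$; you instead iterate $l$ times in one stroke, which forces you to verify $R\leq|\theta-1|_p$ explicitly via the defining inequality $|q|_p^{(m+1)/m}\leq|\theta-1|_p$. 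Both routes amount to the same ultrametric bookkeeping.
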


\begin{proof}
We need only to show $n_1=1$ and the rest follows from Proposition \ref{1-2-1}. 
By the fact $\left|\mathbf{x}^{(2)}-\mathbf{x}^{(3)}\right|=|q|_p|\theta-1|_p$, we have $|x-y|=|q|_p|\theta-1|_p$ for any $x \in \mathcal{C}_2,\ y \in \mathcal{C}_3$. Since $|q|_p|\theta-1|_p < |q|_p^2$,  $\mathcal{C}_2$ and $\mathcal{C}_3$ are in the same image of some ball $\mathbb{B}_r(a_{i,1}) \subset \mathcal{G}_1$ with $1 \leq i \leq m_l$, i.e., $\mathcal{C}_2 \cup \mathcal{C}_3 \subset f_{\theta,q,3}\left(\mathbb{B}_r(a_{i,1})\right)$. By Proposition \ref{1-2-1}, this ball is unique.
\end{proof}

Without loss of generality, we assume $\mathcal{H}_l = \mathbb{B}_r(a_{1,l})$ with $a_{1,l} \in \mathcal{G}_l$.

Let $\mathcal{D}_1=\mathbb{B}_{r}(\mathbf{x}^{(1)})$ and $\mathbb{X}_m= \mathcal{D}_1 \cup \mathcal{H}_m \cup \cdots \cup \mathcal{H}_1 \cup \mathcal{C}_2 \cup \mathcal{C}_3$ for $m\geq 1$.

\begin{proposition} \label{5.4}
Suppose $0 < |q|_p^\frac{m+1}{m} \leq |\theta-1|_p < |q|_p^\frac{m+2}{m+1} < 1$ for some $m \geq 1$. Then $f_{\theta, q, 3}^{-1}\left(\mathbb{X}_m\right) \subset \mathbb{X}_m.$
\end{proposition}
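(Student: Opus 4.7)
The plan is to show that any $x \in \textup{\textbf{Dom}}\{f_{\theta,q,3}\}$ with $f_{\theta, q, 3}(x) \in \mathbb{X}_m$ must already belong to $\mathbb{X}_m$. Since $\mathbb{X}_m \subset \mathcal{C}_1 \cup \mathcal{C}_2 \cup \mathcal{C}_3$, Proposition \ref{p=1mod3} immediately rules out any preimage $x$ lying outside $\mathcal{C}_1 \cup \mathcal{C}_2 \cup \mathcal{C}_3$, because such an $x$ would be mapped into $\mathcal{A}_0 \cup \mathcal{A}_{0,\infty} \cup \mathcal{A}_1$, and this is disjoint from $\mathbb{X}_m$. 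For $x \in \mathcal{C}_2 \cup \mathcal{C}_3$ there is nothing to check, so the problem reduces to the case $x \in \mathcal{C}_1$.

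When $x \in \mathcal{C}_1$, Proposition \ref{ratiooffandx} shows that $f_{\theta,q,3}$ scales distances by the factor $|q|_p/|\theta-1|_p$, and Proposition \ref{1-2-1} says that it is a bijection from $\mathcal{C}_1$ onto $\mathbb{B}_{|q|_p}(\mathbf{x}^{(1)})$. Because every component of $\mathbb{X}_m$ lies in $\mathbb{B}_{|q|_p}(\mathbf{x}^{(1)})$ (each of $\mathcal{D}_1$, $\mathcal{H}_l$, $\mathcal{C}_2$, $\mathcal{C}_3$ has distance $<|q|_p$ to $\mathbf{x}^{(1)}$ thanks to $|\theta-1|_p<|q|_p$), any ball $\mathbb{B}_\rho(y) \subset \mathbb{B}_{|q|_p}(\mathbf{x}^{(1)})$ has a unique preimage in $\mathcal{C}_1$, which is a ball of radius $\rho \cdot |\theta-1|_p/|q|_p$ centered at the corresponding preimage of $y$.

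Applying this to each piece of $\mathbb{X}_m$: the preimage of $\mathcal{D}_1 = \mathbb{B}_r(\mathbf{x}^{(1)})$ in $\mathcal{C}_1$ is the ball $\mathbb{B}_{|\theta-1|_p^2}(\mathbf{x}^{(1)})$, contained in $\mathcal{D}_1$ since $|\theta-1|_p^2 < r$. The preimages of $\mathcal{C}_2$ and $\mathcal{C}_3$ are balls of radius $|\theta-1|_p^2$ whose centers are at distance $|\theta-1|_p^2/|q|_p$ from $\mathbf{x}^{(1)}$, hence sit in $\mathcal{G}_1$; by the defining property of $\mathcal{H}_1$ (that $\mathcal{C}_2 \cup \mathcal{C}_3 \subset f_{\theta,q,3}(\mathcal{H}_1)$) together with the uniqueness given by Proposition \ref{numberballs}, both preimages lie in $\mathcal{H}_1$. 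For $1 \leq l \leq m-1$, the preimage of $\mathcal{H}_l$ is a ball of radius $|\theta-1|_p^2$ whose center lies on $\mathcal{G}_{l+1}$; since $f_{\theta,q,3}^{l+1}$ sends this preimage to $f_{\theta,q,3}^l(\mathcal{H}_l) \supset \mathcal{C}_2 \cup \mathcal{C}_3$, Proposition \ref{numberballs} forces it into $\mathcal{H}_{l+1}$.

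The delicate step, which is the main obstacle, is the preimage of $\mathcal{H}_m$. Its center pulls back to a point at distance $|\theta-1|_p^{m+2}/|q|_p^{m+1}$ from $\mathbf{x}^{(1)}$, and the hypothesis $|\theta-1|_p < |q|_p^{(m+2)/(m+1)}$ is equivalent to $|\theta-1|_p^{m+2}/|q|_p^{m+1} < r$. Combined with the preimage radius $|\theta-1|_p^2 < r$, the ultrametric inequality places the whole preimage of $\mathcal{H}_m$ inside $\mathcal{D}_1$. Collecting the four cases shows that every preimage of $\mathbb{X}_m$ in $\mathcal{C}_1$ lies in $\mathcal{D}_1 \cup \mathcal{H}_1 \cup \cdots \cup \mathcal{H}_m \subset \mathbb{X}_m$, which completes the proof.
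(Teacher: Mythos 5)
Your proof is correct and follows essentially the same route as the paper's: reduce to preimages inside $\mathcal{C}_1$ via Proposition \ref{p=1mod3}, then use the scaling/injectivity of $f_{\theta,q,3}$ on $\mathcal{C}_1$ (Proposition \ref{1-2-1}) together with the uniqueness in Proposition \ref{numberballs} and the defining property of the $\mathcal{H}_l$. The only difference is one of presentation: you work on the preimage side and explicitly locate the preimage of each component of $\mathbb{X}_m$ (in particular making visible where the hypothesis $|\theta-1|_p<|q|_p^{(m+2)/(m+1)}$ is needed, namely to place $f_{\theta,q,3}^{-1}(\mathcal{H}_m)\cap\mathcal{C}_1$ inside $\mathcal{D}_1$), whereas the paper states the equivalent image-side claim and leaves these verifications implicit.
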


\begin{proof}

It suffices to show $f_{\theta, q, 3}\left(\textup{\textbf{Dom}}\{f_{\theta,q,3}\} \setminus \mathbb{X}_m\right) \cap \mathbb{X}_m = \emptyset.$ Suppose $\mathcal{C}=\mathcal{C}_1 \cup \mathcal{C}_{2}\cup \mathcal{C}_{3}.$ By Proposition \ref{p=1mod3}, we have $f_{\theta, q, 3}\left(\textup{\textbf{Dom}}\{f_{\theta,q,3}\} \setminus \mathcal{C}\right) \cap \mathcal{C} = \emptyset$. Since $\mathbb{X}_m \subset \mathcal{C},$ we are left to show $f_{\theta, q, 3}(\mathcal{C} \setminus \mathbb{X}_m) \cap \mathbb{X}_m = \emptyset.$ One can see $\mathcal{C} \setminus \mathbb{X}_m=\mathcal{C}_1 \setminus \mathbb{X}_m = \mathcal{C}_1 \setminus \left(\mathcal{D}_1 \cup \mathcal{H}_1 \cup \cdots \cup \mathcal{H}_m\right).$

By the one-to-one of $f_{\theta,q,3}$ in Proposition \ref{1-2-1}, uniqueness in \ref{numberballs} and definition of $\mathcal{H}_l,\ 1\leq l\leq m$, we have $f_{\theta,q,3}\left(\mathcal{C}_1 \setminus (\mathcal{D}_1 \cup \mathcal{H}_1 \cup \cdots \cup \mathcal{H}_m\right)) \cap \mathbb{X}_m = \emptyset.$
\end{proof}

\begin{proposition} \label{5.6}
Suppose $0 < |q|_p^\frac{m+1}{m} \leq |\theta-1|_p < |q|_p^\frac{m+2}{m+1}$ for some $m\geq 1$. Then
\begin{itemize}
\item[(I)] $f_{\theta, q, 3}(\mathcal{C}_2) \supset \left(\mathcal{C}_1 \cup \mathcal{C}_2 \cup \mathcal{C}_3\right)$ and $f_{\theta, q, 3}(\mathcal{C}_3) \supset \left(\mathcal{C}_1 \cup \mathcal{C}_2 \cup \mathcal{C}_3\right);$
\item[(II)] $f_{\theta, q, 3}(\mathcal{D}_1) \cap (\mathcal{C}_2 \cup \mathcal{C}_3) = \emptyset;$
\item[(III)] $f_{\theta, q, 3}\left(\mathcal{H}_1\right)\supset \left(\mathcal{C}_2 \cup  \mathcal{C}_3\right);$
\item[(IV)] for $m=1,$ we have $f_{\theta, q, 3}(\mathcal{D}_1) \supset \mathcal{H}_1$ and $f_{\theta, q, 3}\left(\mathcal{H}_1\right) \cap \mathcal{D}_1 = \emptyset.$
\item[(V)] for $m\geq 2,$ we have
\begin{itemize}
\item[(i)] $f_{\theta, q, 3}(\mathcal{D}_1) \supset \mathcal{H}_m$ and $f_{\theta, q, 3}(\mathcal{D}_1) \cap (\mathcal{H}_{m-1} \cup \cdots \cup \mathcal{H}_1) = \emptyset;$ 
\item[(ii)] for any $\mathcal{H}_l,\ 2\leq l\leq m,$

$f_{\theta, q, 3}\left(\mathcal{H}_l\right)\supset \mathcal{H}_{l-1}$ and $f_{\theta, q, 3}\left(\mathcal{H}_l\right) \cap \left(\mathcal{D}_1 \cup \mathcal{C}_2 \cup \mathcal{C}_3\bigcup\limits_{i=1, i\neq l-1}^{m}\mathcal{H}_i\right) = \emptyset;$
\item[(iii)] $f_{\theta, q, 3}\left(\mathcal{H}_1\right) \cap \left(\mathcal{D}_1 \cup \bigcup\limits_{l=2}^{m}\mathcal{H}_l\right) = \emptyset.$
\end{itemize}
\end{itemize}
\end{proposition}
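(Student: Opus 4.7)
The plan is to reduce every clause to a routine ball-vs-ball comparison in $\mathbb{Q}_p$ and then close the bookkeeping via the ultrametric property. By Proposition \ref{ratiooffandx}, $f_{\theta,q,3}|_{\mathcal{C}_1}$ is a scaling with ratio $|q|_p/|\theta-1|_p$ and $f_{\theta,q,3}|_{\mathcal{C}_i}$ ($i=2,3$) is a scaling with ratio $1/r$; Lemma \ref{scaling} and Corollary \ref{scaling2} then let me identify the image of any ball or sphere contained in $\mathcal{C}_1\cup\mathcal{C}_2\cup\mathcal{C}_3$ as another ball or sphere with explicit center and radius. Writing $d_l:=|\theta-1|_p^{l+1}/|q|_p^l$ for the defining radius of $\mathcal{G}_l$ around $\mathbf{x}^{(1)}$, the two bounds $|q|_p^{(m+1)/m}\leq|\theta-1|_p<|q|_p^{(m+2)/(m+1)}$ rewrite, after manipulating exponents, as $d_m<|q|_p^2\leq d_{m-1}$, and more generally as the equivalence $d_l\geq|q|_p^2\Longleftrightarrow l\leq m-1$. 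This is the only arithmetic fact I will use below.

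Parts (I)--(III) fall out immediately. For (I), scaling applied at the fixed points $\mathbf{x}^{(2)},\mathbf{x}^{(3)}$ gives $f_{\theta,q,3}(\mathcal{C}_i)=\mathbb{B}_1(\mathbf{x}^{(i)})$, and the ultrametric inequality together with the pairwise distance bound $|\mathbf{x}^{(i)}-\mathbf{x}^{(j)}|_p\leq|\theta-1|_p<1$ forces $\mathcal{C}_1\cup\mathcal{C}_2\cup\mathcal{C}_3\subset\mathbb{B}_1(\mathbf{x}^{(i)})$. For (II), scaling gives $f_{\theta,q,3}(\mathcal{D}_1)=\mathbb{B}_{|q|_p^2}(\mathbf{x}^{(1)})$, while any $y\in\mathcal{C}_2\cup\mathcal{C}_3$ has $|y-\mathbf{x}^{(1)}|_p=|\theta-1|_p\geq|q|_p^2$ and therefore lies outside this open ball. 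Part (III) is simply the defining property of $\mathcal{H}_1$ (the case $l=1$ of $\mathcal{C}_2\cup\mathcal{C}_3\subset f_{\theta,q,3}^{l}(\mathcal{H}_l)$).

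Parts (IV) and (V) are handled by the same template. Since $\mathcal{H}_l\subset\mathcal{G}_l$, the inclusion $\mathcal{H}_l\subset f_{\theta,q,3}(\mathcal{D}_1)=\mathbb{B}_{|q|_p^2}(\mathbf{x}^{(1)})$ is equivalent to $d_l<|q|_p^2$, i.e.\ to $l=m$; this gives (V)(i) and the first half of (IV). Next, Proposition \ref{1-2-1} and Corollary \ref{scaling2} show that $f_{\theta,q,3}(\mathcal{H}_l)$ is a ball of radius $|q|_p^2$ centered on $\mathcal{G}_{l-1}$. Combining $\mathcal{C}_2\cup\mathcal{C}_3\subset f_{\theta,q,3}^{l}(\mathcal{H}_l)=f_{\theta,q,3}^{l-1}(f_{\theta,q,3}(\mathcal{H}_l))$ with the uniqueness supplied by Proposition \ref{numberballs} forces $\mathcal{H}_{l-1}\subset f_{\theta,q,3}(\mathcal{H}_l)$ for $l\geq 2$. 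All remaining disjointness assertions follow by comparing this image ball (radius $|q|_p^2$, center at distance $d_{l-1}$ from $\mathbf{x}^{(1)}$) with $\mathcal{D}_1$, with $\mathcal{C}_2\cup\mathcal{C}_3$, and with each $\mathcal{H}_i$ for $i\neq l-1$: one has $d_{l-1}\geq|q|_p^2>r$ for $l\leq m$, which disjoins it from $\mathcal{D}_1$; for $l\geq 2$ and $m\geq 2$ one has $d_{l-1}<d_0=|\theta-1|_p$ and $|q|_p^2<|\theta-1|_p$ (since $(m+1)/m<2$), placing $f_{\theta,q,3}(\mathcal{H}_l)$ inside $\mathcal{C}_1=\mathbb{B}_{|\theta-1|_p}(\mathbf{x}^{(1)})$ and hence disjoint from $\mathcal{C}_2\cup\mathcal{C}_3$; finally, for $i\neq l-1$ the distinct sphere radii $d_i\neq d_{l-1}$ give ultrametric distance $\max(d_i,d_{l-1})\geq|q|_p^2$ between the two centers, preventing the balls of radii $\leq|q|_p^2$ from meeting. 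Part (V)(iii) and the second half of (IV) are proved identically, using $d_0=|\theta-1|_p\geq|q|_p^2>r$ in place of $d_{l-1}$.

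The only real obstacle is the radius bookkeeping: once the equivalence $d_l\geq|q|_p^2\Longleftrightarrow l\leq m-1$ is read off from the two-sided hypothesis on $|\theta-1|_p$, all eight inclusions and disjointnesses collapse to a single invocation of the ultrametric principle applied to image balls produced by scaling.
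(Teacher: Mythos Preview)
Your proposal is correct and follows essentially the same approach as the paper's proof: both reduce every clause to identifying $f_{\theta,q,3}(\mathcal{D}_1)$, $f_{\theta,q,3}(\mathcal{C}_i)$, and $f_{\theta,q,3}(\mathcal{H}_l)$ as explicit balls via the scaling formulas of Proposition~\ref{ratiooffandx}, and then compare radii and center-distances ultrametrically. Your introduction of the shorthand $d_l=|\theta-1|_p^{l+1}/|q|_p^l$ and the reformulation $d_m<|q|_p^2\le d_{m-1}$ is a tidy bookkeeping device that lets you merge (IV) and (V) into a single template, whereas the paper treats $m=1$ and $m\ge 2$ separately with the same estimates written out in each case; the underlying inequalities are identical. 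The one step you phrase a bit loosely is the derivation of $\mathcal{H}_{l-1}\subset f_{\theta,q,3}(\mathcal{H}_l)$ ``by uniqueness'': what actually makes this work is that the bijection $f^{l-1}\colon\mathcal{G}_{l-1}\to\mathcal{G}_0$ forces both $\mathcal{H}_{l-1}$ and $f(\mathcal{H}_l)$ to contain the common preimage of (say) $\mathbf{x}^{(2)}$, whence the smaller ball sits inside the larger one; the paper's own proof of this point is equally terse (``by the definition of $\mathcal{H}_l$'' and ``one-to-one''), so your level of detail matches theirs.
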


\begin{proof}
Recall for $1\leq l\leq m,\ \mathcal{H}_l = \mathbb{B}_{r}(a_{1,l}),\ |a_{1,l}-\mathbf{x}^{(1)}|_p=\frac{|\theta-1|_p^{l+1}}{|q|_p^{l}}$ and $\left(\mathcal{C}_2 \cup \mathcal{C}_3 \right) \subset f_{\theta, q, 3}^{l}\left(\mathbb{B}_{r}(a_{1,l})\right).$

(I) By \eqref{fxfx}, we have for $i=2,3,$ $f_{\theta, q, 3}\left(\mathcal{C}_i\right) = \mathbb{B}_{1}\left(\mathbf{x}^{(i)}\right).$ Since $|\mathbf{x}^{(1)}-\mathbf{x}^{(2)}|_p=|\mathbf{x}^{(1)}-\mathbf{x}^{(3)}|_p=|\theta-1|_p<1$ and $|\mathbf{x}^{(2)}-\mathbf{x}^{(3)}|_p=|q|_p|\theta-1|_p<1,$ we have $\mathbf{x}^{(1)}, \mathbf{x}^{(2)}, \mathbf{x}^{(3)}\in \mathbb{B}_{1}\left(\mathbf{x}^{(i)}\right)$ for  $i=2,3.$ Moreover, since $|x-\mathbf{x}^{(i)}|_p\leq |\theta-1|_p<1$ for any $x\in\mathcal{C}_1\cup\mathcal{C}_2\cup\mathcal{C}_3,$ we have $\mathcal{C}_1\cup\mathcal{C}_2\cup\mathcal{C}_3\subset f_{\theta, q, 3}\left(\mathcal{C}_i\right).$

(II) By \eqref{fxfx}, we have $f_{\theta, q, 3}\left(\mathcal{D}_1\right) = \mathbb{B}_{|q|_p^2}\left(\mathbf{x}^{(1)}\right)$. Let $x \in \mathcal{C}_i,\ i=2,3.$ Then $|x-\mathbf{x}^{(1)}|_p=|x-\mathbf{x}^{(i)}+\mathbf{x}^{(i)}-\mathbf{x}^{(1)}|_p=|\mathbf{x}^{(i)}-\mathbf{x}^{(1)}|_p=|\theta-1|_p\geq|q|_p^2.$ Thus, $f_{\theta, q, 3}(\mathcal{D}_1) \cap (\mathcal{C}_2 \cup \mathcal{C}_3) = \emptyset$.

(III) By definition of $\mathcal{H}_1$, it is clear $\left(\mathcal{C}_2 \cup  \mathcal{C}_3\right) \subset f_{\theta, q, 3}\left(\mathcal{H}_1\right).$

(IV) Let now $m=1.$ Continue from the argument in (II). From $|q|_p^2 \leq |\theta-1|_p < |q|_p^{\frac{3}{2}}$, we get $\frac{|\theta-1|_p^2}{|q|_p}<|q|_p^2 \leq {|\theta-1|_p}$. Let $x \in \mathcal{H}_1$. Then $|x-\mathbf{x}^{(1)}|_p=|x-a_{1,1}+a_{1,1}-\mathbf{x}^{(1)}|_p=|a_{1,1}-\mathbf{x}^{(1)}|_p=\frac{|\theta-1|_p^{2}}{|q|_p} < |q|_p^2.$ Thus $f_{\theta, q, 3}(\mathcal{D}_1) \supset \mathcal{H}_1$.

By \eqref{fxfx}, we have $f_{\theta, q, 3}\left(\mathcal{H}_1\right) = \mathbb{B}_{|q|_p^2}(f_{\theta, q, 3}(a_{1,1}))$ and $|f_{\theta, q, 3}(a_{1,1})-\mathbf{x}^{(1)}|_p=|\theta-1|_p$. Let $x \in \mathcal{D}_1$. Then $|x-f_{\theta, q, 3}(a_{1,1})|_p=|x-\mathbf{x}^{(1)}+\mathbf{x}^{(1)}-f_{\theta, q, 3}(a_{1,1})|_p=|f_{\theta, q, 3}(a_{1,1})-\mathbf{x}^{(1)}|_p=|\theta-1|_p \geq |q|_p^2.$ Hence $f_{\theta, q, 3}(\mathcal{H}_1) \cap \mathcal{D}_1 = \emptyset$.

(V) Next, let $m\geq2.$ Continue from the argument in (II). From $0 < |q|_p^\frac{m+1}{m} \leq |\theta-1|_p < |q|_p^\frac{m+2}{m+1}$, we get $\frac{|\theta-1|_p^{m+1}}{|q|_p^{m}}<|q|_p^2 \leq \frac{|\theta-1|_p^{m}}{|q|_p^{m-1}}$. Let $x \in \mathcal{H}_m$. Then $|x-\mathbf{x}^{(1)}|_p=|x-a_{1,m}+a_{1,m}-\mathbf{x}^{(1)}|_p=|a_{1,m}-\mathbf{x}^{(1)}|_p=\frac{|\theta-1|_p^{m+1}}{|q|_p^m} < |q|_p^2.$ Thus $f_{\theta, q, 3}(\mathcal{D}_1) \supset \mathcal{H}_m$.

By Proposition \ref{1-2-1}, we have $f_{\theta,q,3}: \mathcal{C}_1 \to \mathbb{B}_{|q|_p}(\mathbf{x}^{(1)})$ and $f_{\theta,q,3}: \mathcal{G}_l \to \mathcal{G}_{l-1}$ for $1\leq l\leq m$ are one-to-one. By the definition of $\mathcal{H}_l$, we have $\mathcal{H}_{l-1} \subset f_{\theta,q,3}(\mathcal{H}_1 )= \mathbb{B}_{\bar{r}}(f_{\theta,q,3}(a_{1,l})),\ 2\leq l \leq m$ where $\bar{r} = |q|_p^2.$ Therefore, the rests, (i)--(iii), are true.
\end{proof}

We define the Julia set of the Potts--Bethe mapping as follows
\begin{equation*}
\mathcal{J}_m=\bigcap_{n=0}^{\infty}f^{-n}_{\theta, q, 3}\left(\mathbb{X}_m\right).
\end{equation*}

\begin{theorem} \label{5.7}
Let $0 < |q|_p^\frac{m+1}{m} \leq |\theta-1|_p < |q|_p^\frac{m+2}{m+1} < 1$ for some $m\geq 1$. Then
\begin{itemize}
\item[(i)] $(\mathbb{X}_m, \mathcal{J}_m, f_{\theta, q, 3})$ is transitive and the dynamical system $f_{\theta,q,3}:\mathcal{J}_m\to\mathcal{J}_m$ is isometrically conjugate to the subshift dynamics $(\Sigma_{A_m},\sigma_{A_m},d_{f_{\theta,q,3}})$ on $(m+3)$ symbols with
$$
A_m=
\begin{pmatrix}
\mathbf{e}_1^T & \mathbf{I}_{m} & \mathbf{0}^T & \mathbf{0}^T \\
0 & \mathbf{0} & 1 & 1 \\
1 & \mathds{1} & 1 & 1 \\
1 & \mathds{1} & 1 & 1
\end{pmatrix}
$$
where $\mathbf{I}_{m}$ is an $m \times m$ identity matrix, $\mathbf{e}_1=(1,0,\cdots,0)$, $\mathds{1}=(1,1,\cdots,1)$ and $\mathbf{0}=(0,0,\cdots,0)$. Here $\mathbf{v}^T$ is the transpose of vector $\mathbf{v}$. Moreover, this subshift is topologically conjugate to the full shift on three symbols.
\item[(ii)] $$\mathbb{Q}_p=\mathfrak{B}\left( \mathbf{x}^{(0)} \right)\cup \mathcal{J}_m\cup \bigcup\limits_{n=0}^{+\infty}f_{\theta, q, 3}^{-n}\{\mathbf{x}^{(\infty)}\}.$$
\end{itemize}
\end{theorem}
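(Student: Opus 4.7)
The plan is to run the proof in four phases: check the $p$-adic repeller hypotheses on $\mathbb{X}_m$, read the incidence matrix off the arrow diagram, apply Theorem~\ref{conjugacytheorem}, and then compare with the full three-shift.

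\textbf{Phase 1 ($p$-adic repeller on $\mathbb{X}_m$).} First I would verify that $\mathbb{X}_m$ is a disjoint union of exactly $m+3$ balls of common radius $r = |q|_p|\theta-1|_p$: namely $\mathcal{D}_1 = \mathbb{B}_r(\mathbf{x}^{(1)})$, $\mathcal{H}_\ell = \mathbb{B}_r(a_{1,\ell})$ by construction, and $\mathcal{C}_i = \mathbb{B}_r(\mathbf{x}^{(i)})$ for $i = 2, 3$; disjointness follows from the inter-point distances among the fixed points together with the placement $a_{1,\ell} \in \mathcal{G}_\ell$. Proposition~\ref{ratiooffandx} then supplies the local expansion constant in~\eqref{2.1}, namely $|q|_p/|\theta-1|_p$ on each of $\mathcal{D}_1,\mathcal{H}_1,\dots,\mathcal{H}_m \subset \mathcal{C}_1$, and $1/(|q|_p|\theta-1|_p)$ on $\mathcal{C}_2,\mathcal{C}_3$. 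Both strictly exceed $1$, so $(\mathbb{X}_m,\mathcal{J}_m,f_{\theta,q,3})$ is a genuine $p$-adic repeller; the required invariance $f_{\theta,q,3}^{-1}(\mathbb{X}_m) \subset \mathbb{X}_m$ is Proposition~\ref{5.4}.

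\textbf{Phase 2 (incidence matrix and isometric conjugacy).} Ordering the balls as $\mathcal{D}_1,\mathcal{H}_m,\mathcal{H}_{m-1},\dots,\mathcal{H}_1,\mathcal{C}_2,\mathcal{C}_3$, I would translate Proposition~\ref{5.6} directly into arrows: $\mathcal{D}_1 \to \mathcal{D}_1,\mathcal{H}_m$; $\mathcal{H}_\ell \to \mathcal{H}_{\ell-1}$ for $2 \leq \ell \leq m$; $\mathcal{H}_1 \to \mathcal{C}_2,\mathcal{C}_3$; and $\mathcal{C}_2,\mathcal{C}_3 \to$ every symbol. Writing this as a matrix reproduces $A_m$ exactly. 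Irreducibility is patent: from any vertex one reaches $\mathcal{C}_2$ in at most $m+2$ steps via the forced chain, and $\mathcal{C}_2$ has an edge to every vertex. Theorem~\ref{conjugacytheorem} then yields the isometric conjugacy between $f_{\theta,q,3}|_{\mathcal{J}_m}$ and $(\Sigma_{A_m},\sigma,d_{f_{\theta,q,3}})$.

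\textbf{Phase 3 (topological conjugacy with $\Sigma_3$).} The first step is to pin down the spectrum of $A_m$. A direct trace count gives $\operatorname{tr}(A_m) = 3$ (from the three diagonal $1$'s at $\mathcal{D}_1,\mathcal{C}_2,\mathcal{C}_3$) and an equally short count gives $\operatorname{tr}(A_m^2) = 9$. Solving the Perron eigenvector equation $A_m v = \lambda v$ by unwinding $v_{k+1} = \lambda v_k$ along the chain, combined with $v_2 = (\lambda-1)v_1$ and $v_{m+2} = v_{m+3}$, forces $\lambda = 3$. These three data pin down the characteristic polynomial as $\lambda^{m+2}(\lambda - 3)$, so $\operatorname{tr}(A_m^n) = 3^n$ for every $n \geq 1$ and $(\Sigma_{A_m},\sigma)$ has the same zeta function as the full three-shift. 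The topological conjugacy itself is then realised by an explicit amalgamation-type sliding block code that collapses the forced substring $\mathcal{D}_1\mathcal{H}_m\mathcal{H}_{m-1}\cdots\mathcal{H}_1$ (which has unique entry $\mathcal{H}_m$ and only two exits, into $\mathcal{C}_2$ or $\mathcal{C}_3$) into a coding over the three symbols $\{\mathcal{D}_1,\mathcal{C}_2,\mathcal{C}_3\}$. This step is the main technical obstacle of the theorem: one must carefully verify that the induced block code, together with its inverse that reinstates the forced chain, is both well-defined and $\sigma$-equivariant on all of $\Sigma_{A_m}$. The zeta-function match, together with the deterministic one-to-one structure of the forced chain, ensures no new idea is required beyond the bookkeeping.

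\textbf{Phase 4 (decomposition of $\mathbb{Q}_p$).} This argument parallels Theorem~\ref{decompositionofQp}(ii). By Theorem~\ref{basin}, $\textup{\textbf{Dom}}\{f_{\theta,q,3}\} \setminus (\mathcal{C}_1\cup\mathcal{C}_2\cup\mathcal{C}_3) \subset \mathfrak{B}(\mathbf{x}^{(0)})$. For any $x \in (\mathcal{C}_1\cup\mathcal{C}_2\cup\mathcal{C}_3) \setminus \mathcal{J}_m$, some iterate $f_{\theta,q,3}^{n_0}(x)$ leaves $\mathbb{X}_m$ by definition of $\mathcal{J}_m$; combined with Proposition~\ref{5.4} and Proposition~\ref{p=1mod3}, the following iterate is forced into $\mathfrak{B}(\mathbf{x}^{(0)}) \cup \{\mathbf{x}^{(\infty)}\}$, giving the stated decomposition.
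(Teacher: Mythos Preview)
Your Phases 1, 2 and 4 are essentially the paper's argument (Phase 4 is verbatim), and your ordering $\mathcal{D}_1,\mathcal{H}_m,\dots,\mathcal{H}_1,\mathcal{C}_2,\mathcal{C}_3$ is in fact the one that reproduces the block form of $A_m$ as stated.

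Phase 3 differs from the paper. The paper skips the spectral computation entirely and simply writes down an explicit $(m+1)$--block factor map $\pi:\Sigma_3\to\Sigma_{A_m}$: on a window $w\in\{0,1,2\}^{m+1}$ it outputs $m{+}1$ or $m{+}2$ if $w$ starts with $1$ or $2$, outputs $l$ if $w=0^l\,c\,\ast$ with $c\in\{1,2\}$ and $1\le l\le m$, and outputs $0$ if $w=0^{m+1}$. Its inverse is the $1$--block collapse $\mathcal{D}_1,\mathcal{H}_l\mapsto 0$, $\mathcal{C}_2\mapsto 1$, $\mathcal{C}_3\mapsto 2$, which is exactly the ``amalgamation'' you are gesturing at. So you and the paper land on the same pair of block codes, only the paper states them explicitly while you describe them structurally.

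Two small cautions about your write--up. First, the sentence ``these three data pin down the characteristic polynomial as $\lambda^{m+2}(\lambda-3)$'' is not justified: $\operatorname{tr}A_m=3$, $\operatorname{tr}A_m^2=9$ and the existence of the eigenvalue $3$ do not by themselves determine an $(m{+}3)\times(m{+}3)$ polynomial. What actually works is to push your eigenvector unwinding further: if $A_m v=\lambda v$ with $\lambda\ne 0$ then the chain relations force $v_1\ne 0$ (otherwise all coordinates vanish), and then the last two rows force $\lambda=3$; hence $3$ is the \emph{only} nonzero eigenvalue, and the trace gives its multiplicity. In any case this whole spectral detour is logically unnecessary once you have the block codes. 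Second, your phrase ``unique entry $\mathcal{H}_m$'' is misleading: $\mathcal{C}_2$ and $\mathcal{C}_3$ can enter the chain at \emph{any} $\mathcal{H}_l$, not just $\mathcal{H}_m$. The $1$--block collapse still works (every $\mathcal{H}_l$ goes to $0$ regardless of how it was entered), but the inverse must genuinely look $m{+}1$ symbols ahead to decide which $\mathcal{H}_l$ or $\mathcal{D}_1$ to output --- exactly as in the paper's formula.
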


\begin{proof}
Let us consider the coding, $\mathcal{D}_1 \to 0,\ \mathcal{H}_l \to l$ where $l=1,\cdots,m,\ \mathcal{C}_2 \to m+1$ and $\mathcal{C}_3 \to m+2$. Due to Theorem \ref{conjugacytheorem} and Propositions \ref{numberballs}--\ref{5.6}, the dynamical system $f_{\theta, q, 3}:\mathcal{J}_m\to\mathcal{J}_m$ is isometrically conjugate to the subshift dynamics $(\Sigma_{A_m},\sigma_{A_m},d_{f_{\theta,q,3}})$ associated with the $(m+3)\times(m+3)$ matrix $A_m$ for $m\geq 1$.

Next, we show that the subshift $\Sigma_{A_m} \subset \Sigma_{m+3}$ is topologically conjugate to the full shift $\Sigma_3$ on three symbols. For details, one can refer to Sections 1.4--1.5 of \cite{LM}. We want to construct a bijection, $\pi : \Sigma_3 \to \Sigma_{A_m}$. First, we let $\pi_m : \mathcal{B}_{m+1}\left(\Sigma_A\right) \to \mathcal{B}_1\left(\Sigma_{A_m}\right),$ where $\mathcal{B}_N\left(\Sigma\right)$ is set of all $N$-block on any shift space $\Sigma$ over $\{1,2,\dots,s\}$ i.e., the $N$-block $\left(x_jx_{j+i}\cdots x_{j+N}\right),\ j\leq n\leq j+N,\ x_n \in \{1,2,\dots,s\}$ and $j\geq 1$. We define for $x_i \in \{0,1,2\}$ and $l,i=1,\dots,m,\ \pi_{m} :$
$$
\begin{matrix}
0^{m+1} \to 0 \\
0^{l}1x_1\cdots x_{m-l} \to l \\
0^{l}2x_1\cdots x_{m-l} \to l \\
1x_1\cdots x_m \to m+1 \\
2x_1\cdots x_m \to m+2.
\end{matrix}
$$

Now, we are ready to construct the map $\pi$. We have for any $x \in \Sigma_3$,
\begin{eqnarray}
\pi(x):=\pi_{m}(x_0\dots x_m)\pi_{m}(x_1\dots x_{m+1})\pi_{m}(x_2\dots x_{m+2})\dots\pi_{m}(x_i\dots x_{m+i})\dots. \nonumber
\end{eqnarray}
Then $\pi \circ \sigma = \sigma_{A_m} \circ \pi$. Moreover, $\pi$ is bijective. Therefore, we have shown a conjugacy between $\Sigma_A$ and $\Sigma_{A_m}$.

(ii) Due to Theorem \ref{basin}, the following inclusion holds
$$\textup{\textbf{Dom}}\{f_{\theta,q,3}\}\setminus \left(\mathcal{C}_1 \cup \mathcal{C}_{2}\cup \mathcal{C}_{3}\right) \subset \mathfrak{B}\left( \mathbf{x}^{(0)} \right).$$ 

Let $x\in \left(\mathcal{C}_1 \cup \mathcal{C}_{2}\cup \mathcal{C}_{3}\right)\setminus \mathcal{J}_m.$ There exists $n_0$ such that $x\not\in f^{-n_0}_{\theta, q, 3}\left(\mathbb{X}_m\right)$ or equivalently $f^{n_0}_{\theta, q, 3}(x)\not\in\mathbb{X}_m.$ Due to Proposition \ref{5.4}, we get  $f^{n_0+1}_{\theta, q, 3}(x)\in \mathfrak{B}\left(\mathbf{x}^{(0)}\right)\cup\{\mathbf{x}^{\infty}\}.$ Consequently, we get the following decomposition 
$$\mathbb{Q}_p=\mathfrak{B}\left( \mathbf{x}^{(0)} \right)\cup \mathcal{J}_m\cup \bigcup\limits_{n=0}^{+\infty}f_{\theta, q, 3}^{-n}\{\mathbf{x}^{(\infty)}\}.$$
\end{proof}

\section{Dynamics of the Potts--Bethe mapping for \texorpdfstring{$k=3$}{Lg} with small primes}

In this section, we investigate the dynamics of the Potts--Bethe mapping for $k=3$ in the cases $p=3$ and $p=2.$ The following techniques of Newton polygon and Hensel's lemma are very useful for finding the roots of polynomials in $\mathbb{Q}_p$.

\begin{definition}[Newton polygon] Let $f(x) = \sum_{i=1}^{n} a_ix^i$ be a polynomial whose coefficients are $p$-adic complex numbers. The Newton polygon of $f$ is the convex hull of the set of points $\left\{(i,-\log_p|a_i|_p) : 0\leq i\leq n\right\}.$ By convention, we set $-\log_p|0|_p = \infty$. To construct Newton polygon, we take a vertical ray starting at the point $(0,-\log_p|a_0|_p)$ and aiming down the $y$-axis. Then rotate the ray counterclockwise, keeping the point $(0,-\log_p|a_0|_p)$ fixed, until it bends around all of the points $(i,-\log_p|a_i|_p)$. \end{definition}

The following lemma is one of the results on Newton polygon.

\begin{lemma}[see Theorem 5.11 \cite{Silverman}] \label{newton}
Let $f(x) = \sum_{i=1}^{n} a_ix^i$ be a polynomial whose coefficients are $p$-adic complex numbers. Suppose that the Newton polygon of $f$ includes a line segment of slope $m$ whose horizontal length is $L$, i.e., the Newton polygon has a line segment from $$(i,-\log_p|a_i|_p) \ \text{to} \ (i+L,-\log_p|a_{i+L}|_p)$$ whose slope is $$m=\frac{-\log_p|a_{i+L}|_p+\log_p|a_i|_p}{L}.$$ Then $f$ has exactly $L$ roots $x_0$ (counting multiplicity), satisfying $|x_0|_p = p^{m}$. 
\end{lemma}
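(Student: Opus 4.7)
The plan is to work over an algebraic closure $\overline{\mathbb{Q}}_p$, where the extension of $v:=-\log_p|\cdot|_p$ is unique, and to factor $f$ completely: $f(x)=a_n\prod_{j=1}^n(x-\alpha_j)$. Ordering the roots so that $v(\alpha_1)\le\cdots\le v(\alpha_n)$ and grouping them by common valuation into blocks of sizes $L_1,\ldots,L_r$ with distinct valuations $c_1<\cdots<c_r$, the statement reduces to showing that the Newton polygon of $f$ has exactly $r$ segments, with slopes $-c_\ell$ and horizontal lengths $L_\ell$. That gives precisely $L_\ell$ roots with $|\alpha_j|_p=p^{-c_\ell}=p^{m}$, which is what the lemma claims.

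The main ingredient is Vieta's identity $a_i/a_n=(-1)^{n-i}e_{n-i}(\alpha_1,\ldots,\alpha_n)$. Every monomial of $e_{n-i}$ is a product of $n-i$ distinct roots, so the ultrametric inequality yields
\[
-\log_p|a_i|_p \;\ge\; -\log_p|a_n|_p + w_{n-i}, \qquad w_k:=v(\alpha_1)+\cdots+v(\alpha_k).
\]
At the indices $i=n-(L_1+\cdots+L_\ell)$ that separate consecutive valuation blocks, the minimum $w_{n-i}$ is attained by a unique monomial of $e_{n-i}$, namely $\alpha_1\alpha_2\cdots\alpha_{n-i}$: any other choice of $n-i$ roots has to borrow from a strictly higher block and therefore has strictly larger total valuation. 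Hence equality holds at each inter-block index, with no cancellation to worry about.

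The function $i\mapsto -\log_p|a_n|_p+w_{n-i}$ is piecewise linear with breakpoints precisely at those inter-block indices; its slopes equal $-v(\alpha_{n-i})$ and are non-decreasing in $i$, so it is already convex. Since the lower convex hull of $\{(i,-\log_p|a_i|_p)\}_{i=0}^n$ is bounded below by this majorant yet meets it at every breakpoint, the two coincide, and the Newton polygon has the claimed segment slopes and horizontal lengths. The only delicate point I anticipate is the equality step above: in principle many monomials of $e_{n-i}$ could share the minimal valuation and produce ultrametric cancellation, which would require a finer analysis (for instance by factoring $f=\prod_\ell f_\ell$ according to valuation blocks and identifying the problematic symmetric sum with a product of nonzero constant terms). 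Restricting attention to the breakpoint indices bypasses this concern entirely, since exactly one monomial attains the minimum at each such $i$.
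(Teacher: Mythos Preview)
The paper does not prove this lemma; it merely quotes it from Silverman's book (Theorem~5.11) and moves on to Hensel's lemma. There is therefore no ``paper's own proof'' to compare against.

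Your argument is the standard one and is correct. Factoring $f$ over $\overline{\mathbb{Q}}_p$, writing $a_i/a_n=(-1)^{n-i}e_{n-i}(\alpha_1,\ldots,\alpha_n)$, and applying the ultrametric inequality gives the lower bound $v(a_i)\ge v(a_n)+w_{n-i}$, where $w_k=v(\alpha_1)+\cdots+v(\alpha_k)$ with the roots ordered by increasing valuation. The key point you identify is exactly right: at each inter-block index $i$ (where $n-i=L_1+\cdots+L_\ell$) the monomial $\alpha_1\cdots\alpha_{n-i}$ is the \emph{unique} one of minimal valuation in $e_{n-i}$, so no ultrametric cancellation can occur and equality holds there. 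Since the minorant $i\mapsto v(a_n)+w_{n-i}$ is convex, lies below all the points $(i,v(a_i))$, and coincides with them at every breakpoint (including $i=0$ and $i=n$), it must equal the lower convex hull. This yields the claimed correspondence between segments of slope $-c_\ell$, horizontal length $L_\ell$, and the $L_\ell$ roots of valuation $c_\ell$.

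Your remark that one need not worry about cancellation at non-breakpoint indices is also well placed: equality may fail there, but it is irrelevant to determining the polygon.
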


Next is the statement of Hensel's lemma.

\begin{lemma}[Hensel's lemma, see Theorem 3 of \cite{Bor Shaf}]\label{Hensel} \label{hensel}
Let $f$ be a polynomial whose coefficients are $p$-adic
integers. Let $\theta$ be a $p$-adic integer such that for some
$i\geq 0$ $$
f(\theta)\equiv 0 \ (mod \ p^{2i+1}), \quad
f'(\theta)\equiv 0 \ (mod \ p^{i}), \quad f'(\theta)\not\equiv 0 \ (mod \ p^{i+1}).
$$ Then $f$ has a unique $p$-adic integer root $x_0$ which satisfies $x_0\equiv \theta\ (mod \ p^{i+1}).$
\end{lemma}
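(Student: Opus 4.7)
The plan is to prove this by $p$-adic Newton iteration. Set $\theta_0 := \theta$ and recursively define
\[
\theta_{n+1} := \theta_n - \frac{f(\theta_n)}{f'(\theta_n)},
\]
and aim to show that $\{\theta_n\}$ is a Cauchy sequence in $\mathbb{Z}_p$ whose limit is the root $x_0$. The whole induction comes down to simultaneously tracking two $p$-adic valuations: $v_p(f'(\theta_n))$, which must stay \emph{exactly} $i$ so that the division is well-controlled, and $v_p(f(\theta_n))$, which must blow up quadratically fast.

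The inductive step uses the polynomial Taylor expansions
\[
f(\theta_n + h) = f(\theta_n) + f'(\theta_n)\,h + h^2 g(\theta_n,h), \qquad f'(\theta_n + h) = f'(\theta_n) + h\, g_1(\theta_n,h),
\]
where $g, g_1$ have $p$-adic integer coefficients. Writing $h_n := -f(\theta_n)/f'(\theta_n)$ so that $\theta_{n+1}=\theta_n+h_n$, we get $v_p(h_n)=v_p(f(\theta_n))-v_p(f'(\theta_n))$. From the base hypotheses $v_p(f(\theta_0))\geq 2i+1$ and $v_p(f'(\theta_0))=i$ one obtains $v_p(h_0)\geq i+1$; the second expansion then forces $v_p(f'(\theta_1))=i$ (the perturbation $h_0 g_1$ has valuation $\geq i+1>i$), and the first yields $v_p(f(\theta_1))\geq 2v_p(h_0)\geq 2i+2$. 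A clean induction shows $v_p(f(\theta_n))\geq 2^n+2i$ and $v_p(f'(\theta_n))=i$ for all $n$, hence $v_p(\theta_{n+1}-\theta_n)\geq 2^n+i$. By the strong triangle inequality, $\{\theta_n\}$ is Cauchy with limit $x_0\in\mathbb{Z}_p$ satisfying $f(x_0)=0$, and since every $v_p(\theta_{n+1}-\theta_n)\geq i+1$, the ultrametric inequality yields $x_0\equiv\theta\pmod{p^{i+1}}$.

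For uniqueness, suppose $x_0'$ is another $p$-adic integer root with $x_0'\equiv\theta\pmod{p^{i+1}}$ and set $\delta:=x_0'-x_0$, so $v_p(\delta)\geq i+1$. The same perturbation argument shows $v_p(f'(x_0))=i$. Expanding
\[
0 = f(x_0') - f(x_0) = f'(x_0)\,\delta + \delta^2\, g(x_0,\delta),
\]
the first summand has valuation exactly $i+v_p(\delta)$ while the second has valuation at least $2v_p(\delta)\geq i+1+v_p(\delta)$, which is strictly larger. The strong triangle inequality then forces $\delta=0$.

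The only delicate point is that the two valuation bounds feed into each other: to make sense of $h_n$ one needs $v_p(f'(\theta_n))=i$, but to propagate that equality through the step one needs $v_p(h_n)\geq i+1$, which itself rests on the current bound on $f(\theta_n)$. Verifying the base case $v_p(h_0)\geq(2i+1)-i=i+1$ is precisely where the combined strength of the three hypotheses $f(\theta)\equiv 0\pmod{p^{2i+1}}$, $f'(\theta)\equiv 0\pmod{p^i}$, $f'(\theta)\not\equiv 0\pmod{p^{i+1}}$ is used to pin down $v_p(f'(\theta))=i$ exactly. Once the base case is in hand, the quadratic estimate $v_p(f(\theta_{n+1}))\geq 2v_p(f(\theta_n))-2i$ runs on autopilot.
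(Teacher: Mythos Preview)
Your Newton-iteration proof is correct and is the standard argument for this strengthened form of Hensel's lemma. The paper itself does not supply a proof of this statement: it is quoted as a known result with a reference to Borevich--Shafarevich, so there is no ``paper's own proof'' to compare against. Your argument would serve perfectly well as a self-contained justification, and the inductive bookkeeping you lay out (in particular, that $v_p(f'(\theta_n))$ stays pinned at $i$ while $v_p(f(\theta_n))$ grows at least like $2^n+2i$) is exactly the mechanism behind the result.
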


Now, we begin our investigation by distinguishing two cases: $p=3$ and $p=2.$

\subsection{Case \texorpdfstring{$p=3$}{Lg}}
\subsubsection{Fixed points}

Let us find all possible roots of the cubic equation \eqref{wrty}. We always suppose $0<|\theta-1|_3<|q|_3<1.$

\begin{proposition}\label{rootp=3}
If $|q|_3=\frac{1}{3}$ then the cubic equation \eqref{wrty} has no roots over $\mathbb{Q}_3$. If $|q|_3<\frac{1}{3}$ then the cubic equation \eqref{wrty} has one root $\mathbf{y}^{(1)}$ over $\mathbb{Q}_3$ such that
\begin{itemize}
	\item[(i)] $|\mathbf{y}^{(1)}|_3 = 1;$
	\item[(ii)] $|\mathbf{y}^{(1)}-3|_3 < \frac{1}{3}.$
\end{itemize}
\end{proposition}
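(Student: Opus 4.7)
The plan is to analyse the cubic equation \eqref{wrty} via its $3$-adic Newton polygon (Lemma \ref{newton}) and Hensel's lemma (Lemma \ref{hensel}). Writing $\theta=1+\varepsilon$ with $|\varepsilon|_3=|\theta-1|_3<|q|_3$, the strong triangle inequality yields $|1+\theta+\theta^2|_3=|3+3\varepsilon+\varepsilon^2|_3=\tfrac{1}{3}$, $|2\theta+1|_3=|3+2\varepsilon|_3=\tfrac{1}{3}$, and $|1-\theta-q|_3=|\varepsilon+q|_3=|q|_3$. Hence the coefficients $a=-(1+\theta+\theta^2)$, $b=-(2\theta+1)(1-\theta-q)$, $c=-(1-\theta-q)^2$ of \eqref{wrty} satisfy $|a|_3=1/3$, $|b|_3=|q|_3/3$, and $|c|_3=|q|_3^2$.

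When $|q|_3=1/3$, the Newton polygon vertices are $(0,2),(1,2),(2,1),(3,0)$, and both interior points lie strictly above the line joining $(0,2)$ and $(3,0)$. Hence the lower hull is the single segment of slope $-2/3$, and by Lemma \ref{newton} every root $y\in\overline{\mathbb{Q}}_3$ satisfies $|y|_3=3^{-2/3}\notin 3^{\mathbb{Z}}$; no root of \eqref{wrty} lies in $\mathbb{Q}_3$.

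When $|q|_3=3^{-r}$ with $r\geq 2$, the vertices become $(0,2r),(1,r+1),(2,1),(3,0)$. A direct convexity check shows the polygon is the union of the segment from $(0,2r)$ to $(2,1)$ of slope $(1-2r)/2$ and length $2$, and the segment from $(2,1)$ to $(3,0)$ of slope $-1$ and length $1$ (the point $(1,r+1)$ is strictly above the first segment). The slope $(1-2r)/2$ is never integral for $r\geq 2$, ruling out the two roots on that segment from $\mathbb{Q}_3$; the length-one segment contributes exactly one $\mathbb{Q}_3$-root with $3$-adic absolute value $1/3$. To identify it, substitute $y=3u$ with $|u|_3=1$, divide through by $27$, and reduce modulo $3$: valuation estimates give $a/3\equiv -1$, $b/9\equiv 0$, and $c/27\equiv 0\pmod 3$, so the reduced cubic is $u^2(u-1)\equiv 0\pmod 3$. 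Since $|u|_3=1$ excludes $u\equiv 0$, one must have $u\equiv 1\pmod 3$, equivalently $|\mathbf{y}^{(1)}-3|_3<1/3$, which is (ii). Hensel's lemma applied to $h(u)=u^3+(a/3)u^2+(b/9)u+c/27$ at $u_0=1$ is then immediate, since $h(1)\equiv 0$ and $h'(1)\equiv 1\pmod 3$, producing the unique $\mathbb{Q}_3$-lift and hence the root $\mathbf{y}^{(1)}=3u$.

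The most delicate part of this plan is the $3$-adic bookkeeping for the reduced cubic in $u$, especially in the borderline case $r=2$ where several contributions in $b/9$ and $c/27$ share the same valuation. Performing the substitution $y=3u$ \emph{before} invoking Hensel's lemma makes the simple-root hypothesis hold trivially at $u_0=1$ and circumvents having to verify the sharper form of Lemma \ref{hensel} directly for the cubic in $y$ (where the naive estimate $|f(3)|_3<|f'(3)|_3^2$ becomes an equality at $r=2$). Conceptually, the failure of the slope-$(1-2r)/2$ segment to contribute $\mathbb{Q}_3$-roots mirrors the non-existence of $\sqrt{-3}$ in $\mathbb{Q}_3$, in contrast with the case $p\equiv 1\pmod 3$ treated in Proposition \ref{structureroot}.
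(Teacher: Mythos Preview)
Your proof is correct and follows the same Newton-polygon-plus-Hensel strategy as the paper; your substitution $y=3u$ before invoking Hensel is a clean simplification that avoids the paper's case split between $|q|_3=1/9$ and $|q|_3<1/9$ (where the paper must push the congruences to modulus $3^5$ and choose a refined initial approximation). Note that both your argument and the paper's own proof establish $|\mathbf{y}^{(1)}|_3=1/3$ rather than $|\mathbf{y}^{(1)}|_3=1$ as printed in item~(i) of the statement --- this is a typo in the statement, and the value $1/3$ is what is used consistently in the subsequent list of properties and in Proposition~\ref{behaviourfixedpointsp=3}.
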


\begin{proof}
Let $|q|_3=3^{-t},\ |\theta-1|_3=3^{-s}.$ Then $s>t>0$. By assumptions, $q = 3^tq^{*}$ and $\theta = 1 + 3^s\theta^*$ with $|q^{*}|_3 = |\theta^*|_3=1$. In \eqref{wrty}, let $a_0=-(1-\theta-q)^2,\ a_1=-(2\theta+1)(1-\theta-q),\ a_2=-(1+\theta+\theta^2)$ and $a_3=1$. Then $|a_0|_3=|q|_3^2,\ |a_1|_3=\frac{|q|_3}{3},\ |a_2|_3=\frac{1}{3},\ |a_3|_3=1$ and $-\log_3|a_0|_3=2t,\ -\log_3|a_1|_3=t+1,\ -\log_3|a_2|_3=1,\ -\log_3|a_3|_3=0$. We consider the following points on the plane, $(0,2t),\ (1,t+1),\ (2,1),\ (3,0)$ and construct the Newton polygon. We distinguish two cases: $|q|_3=\frac{1}{3}$ and $|q|_3<\frac{1}{3}$.

\textsc{case 1:} $|q|_3=\frac{1}{3}$. Then $t=1$. The Newton polygon is the line from $(0,2)$ to $(3,0)$ with slope $m_1=-\frac{2}{3}$ and length $l_1=3$. So we have three roots $\mathbf{y}^{(1)},\ \mathbf{y}^{(2)},\ \mathbf{y}^{(3)} \in \mathbb{C}_3$ (the space of $3$-adic complex numbers) with $|\mathbf{y}^{(i)}|_3=3^{-\frac{2}{3}}$ for $i=1,2,3$ (including multiplicity). Since $-\frac{2}{3} \notin \mathbb{Z}$, the roots are not in $\mathbb{Q}_3$. Thus, the cubic equation \eqref{wrty} has no roots in $\mathbb{Q}_3$.

\textsc{case 2:} $|q|_3<\frac{1}{3}$. Then $t \geq 2$. The Newton polygon is the line from $(0,2t)$ to $(2,1)$ and from $(2,1)$ to $(3,0)$ with the slopes $m_1=-1$ and length $l_1=1$ and $m_2=\frac{1-2t}{2}$ and length $l_2=2$ respectively. For this case, we have three roots $\mathbf{y}^{(1)},\ \mathbf{y}^{(2)},\ \mathbf{y}^{(3)} \in \mathbb{C}_3$ with $|\mathbf{y}^{(1)}|_3=\frac{1}{3}$ and $|\mathbf{y}^{(i)}|_3=3^{\frac{1-2t}{2}}$ for $i=2,3$ (including multiplicity). Since $\frac{1-2t}{2} \notin \mathbb{Z}$, the roots $\mathbf{y}^{(2)},\ \mathbf{y}^{(3)}$ are not in $\mathbb{Q}_3$. Therefore, the equation \eqref{wrty} has at most one root in $\mathbb{Q}_3$.

Now, we show $\mathbf{y}^{(1)} \in \mathbb{Q}_3$. We consider the following cubic polynomial and its derivative,
\begin{eqnarray*}
h(y)&=&y^3-(\theta^2+\theta+1)y^2-(2\theta+1)(1-\theta-q)y-(1-\theta-q)^2, \\
h^{\prime}(y)&=&3y^2-2(\theta^2+\theta+1)y-(2\theta+1)(1-\theta-q).
\end{eqnarray*}
We have
\begin{eqnarray*}
\theta^2+\theta+1=3+3^{s+1}\theta^*+3^{2s}(\theta^*)^2,\ 2\theta+1=3+2\cdot3^{s}\theta^*
\end{eqnarray*}
and $\mathbf{y}^{(1)}=3(y_1+3Y_2)$ where $Y_2=y_2+3y_3+\cdots$. We consider two cases: $|q|_3<\frac{1}{9}$ and $|q|_3=\frac{1}{9}$.

\textsc{case 1}: If $|q|_3<\frac{1}{9},$ we take $\mathbf{y}=3$. Since $s>t>2$, we get
\begin{eqnarray*}
h(\mathbf{y})&=&3^3-3^2(3+3^{s+1}\theta^*+3^{2s}(\theta^*)^2)-3(3+2\cdot3^{s}\theta^*)(-3^tq^*-3^{s}\theta^*)\\ &&-(-3^tq^*-3^{s}\theta^*)^2 \\ &\equiv & 0 \ ( \rm{mod} \ 243)\\
h^{\prime}(\mathbf{y})&=&3^3-2\cdot 3(3+3^{s+1}\theta^*+3^{2s}(\theta^*)^2)-(3+2\cdot3^{s}\theta^*)(-3^tq^*-3^{s}\theta^*) \\ &\equiv & 2\cdot 3^2 \not\equiv 0 \ ( \rm{mod} \ 27).
\end{eqnarray*}
Due to Hensel's lemma there is $\mathbf{y}^{(1)} \in \mathbb{Z}_3$ such that $h(\mathbf{y}^{(1)})=0$ and $\mathbf{y}^{(1)} \equiv \mathbf{y} \ (\rm{mod} \ 27)$.

\textsc{case 2}: If $|q|_3=\frac{1}{9},$ we take $\mathbf{y}=3+3^2\bar{y}$ with $2\bar{y} \equiv (q^*)^2-q^* \ (\rm{mod} \ 3)$. Since $s>t=2$, we get
\begin{eqnarray*}
h(\mathbf{y})&=&3^3-3^2(3+3^{s+1}\theta^*+3^{2s}(\theta^*)^2)-3(3+2\cdot3^{s}\theta^*)(-3^2q^*-3^{s}\theta^*)\\ &&-(-3^2q^*-3^{s}\theta^*)^2 \\ &\equiv & 3^4\cdot2\bar{y} + 3^4q^* - 3^4(q^*)^2 \equiv 3^4\left(2\bar{y} + q^* - (q^*)^2\right) \equiv 0 \ ( \rm{mod} \ 243)\\
h^{'}(\mathbf{y})&=&3^3-2\cdot 3(3+3^{s+1}\theta^*+3^{2s}(\theta^*)^2)-(3+2\cdot3^{s}\theta^*)(-3^2q^*-3^{s}\theta^*) \\ &\equiv & 2\cdot 3^2 \not\equiv 0 \ ( \rm{mod} \ 27).
\end{eqnarray*}
Again, by Hensel's lemma there is $\mathbf{y}^{(1)} \in \mathbb{Z}_3$ such that $h(\mathbf{y}^{(1)})=0$ and $\mathbf{y}^{(1)} \equiv \mathbf{y} \ ( \rm{mod} \ 27)$.

In both cases, we have shown the existence of the root. Moreover, we have also shown $\mathbf{y}^{(1)} \equiv \mathbf{y} \equiv 3 \ ( \rm{mod} \ 9)$. Thus, $|\mathbf{y}^{(1)}-3|_3\leq \frac{1}{9}<\frac{1}{3}$.
\end{proof}

\begin{proposition}\label{fixedpointsp=3}
If $|q|_3<\frac{1}{3}$ then $\mathbf{Fix}\{f_{\theta, q, 3}\}=\{\mathbf{x}^{(0)}, \ \mathbf{x}^{(1)}\}$ where $\mathbf{x}^{(0)}:=1,\ \mathbf{x}^{(1)}:=1-q+(\theta-1)(\mathbf{y}^{(1)}-1)$ where $\mathbf{y}^{(1)}$ is the root of the cubic equation \eqref{wrty}. If $|q|_3=\frac{1}{3}$ then $\mathbf{Fix}\{f_{\theta, q, 3}\}=\{\mathbf{x}^{(0)}\}$.
\end{proposition}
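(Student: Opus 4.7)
The proof is a straightforward consequence of Proposition \ref{rootp=3} together with the reduction to the cubic equation \eqref{wrty} already carried out at the beginning of Section \ref{section4}. The plan is to note that $\mathbf{x}^{(0)}=1$ is always a fixed point of $f_{\theta,q,3}$ (this is immediate from $f_{\theta,q,3}(1)=\left(\tfrac{\theta+q-1}{\theta+q-1}\right)^3=1$, and has already been recorded right after \eqref{IsingPottsk=3}), so that all remaining fixed points $x\neq \mathbf{x}^{(0)}$ must satisfy the cubic equation \eqref{wrtx}, which via the linear change of variable $y=\frac{x-1+q}{\theta-1}+1$ is equivalent to \eqref{wrty}.

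Then I would simply translate the root count in Proposition \ref{rootp=3} back to the $x$-variable. In the case $|q|_3<\tfrac{1}{3}$, Proposition \ref{rootp=3} gives exactly one root $\mathbf{y}^{(1)}\in\mathbb{Q}_3$ of \eqref{wrty}, so the inverse change of variables produces exactly one extra fixed point
\[
\mathbf{x}^{(1)}=1-q+(\theta-1)\bigl(\mathbf{y}^{(1)}-1\bigr),
\]
hence $\mathbf{Fix}\{f_{\theta,q,3}\}=\{\mathbf{x}^{(0)},\mathbf{x}^{(1)}\}$. In the case $|q|_3=\tfrac{1}{3}$, Proposition \ref{rootp=3} says \eqref{wrty} has no root in $\mathbb{Q}_3$, so there is no extra fixed point and $\mathbf{Fix}\{f_{\theta,q,3}\}=\{\mathbf{x}^{(0)}\}$.

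The only technical point to check, to be sure no fixed point has been lost, is that the substitution $y\mapsto x=1-q+(\theta-1)(y-1)$ is a bijection $\mathbb{Q}_3\to\mathbb{Q}_3$ (which it is, since $\theta-1\neq 0$), and that we have not introduced the forbidden value $x=\mathbf{x}^{(\infty)}=2-\theta-q$; but the latter is never a fixed point because it is not in $\textup{\textbf{Dom}}\{f_{\theta,q,3}\}$, and in any event the cubic \eqref{wrtx} was obtained as $(x-\mathbf{x}^{(\infty)})^3\cdot(f_{\theta,q,3}(x)-1)/(x-1)=0$ after removing the factor $x-1$, so no spurious or missing fixed points arise. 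No step here is a genuine obstacle: this proposition is essentially a bookkeeping corollary of Proposition \ref{rootp=3}.
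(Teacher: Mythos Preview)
Your proposal is correct and follows exactly the paper's approach: the paper's proof is the single sentence ``The proof follows directly from Proposition \ref{rootp=3},'' and your argument simply spells out the bookkeeping behind that sentence. The extra checks you include (bijectivity of the linear change of variables, exclusion of $\mathbf{x}^{(\infty)}$) are harmless elaborations of the same idea.
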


\begin{proof}The proof follows directly from Proposition \ref{rootp=3}. \end{proof}

\begin{proposition}\label{behaviourfixedpointsp=3}
We have \begin{itemize}
\item[(i)] $\mathbf{x}^{(0)}$ is an attracting fixed point,
\item[(ii)] For $|q|_3<\frac{1}{3},\ $ $\mathbf{x}^{(1)}$ is a repelling fixed point.
\end{itemize}
\end{proposition}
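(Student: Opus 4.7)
The plan is to mimic the strategy of Proposition~\ref{behaviourfixedpoints}, starting from the identity
\begin{equation*}
f'_{\theta, q, 3}(\mathbf{x}) = \frac{3(\theta-1)(\theta-1+q)\mathbf{x}}{(\theta \mathbf{x}+q-1)(\mathbf{x}+\theta+q-2)}
\end{equation*}
valid at any fixed point $\mathbf{x}$ of $f_{\theta,q,3}$. The only new ingredient relative to the $p \equiv 1 \ (\mathrm{mod}\ 3)$ setting is that one must now keep track of the contribution $|3|_3 = 1/3$, which shows up both from the leading $3$ in the numerator and from the $3$-adic size of $\mathbf{y}^{(1)}$.

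For part (i), specialising to $\mathbf{x}^{(0)}=1$ collapses the denominator to $(\theta+q-1)^2$ and cancels one factor of $\theta-1+q$, leaving $f'_{\theta,q,3}(\mathbf{x}^{(0)}) = 3(\theta-1)/(\theta+q-1)$. Since $|\theta-1|_3 < |q|_3$ forces $|\theta+q-1|_3=|q|_3$, I would conclude
\begin{equation*}
\left|f'_{\theta,q,3}(\mathbf{x}^{(0)})\right|_3 = \frac{|\theta-1|_3}{3|q|_3} < \frac{1}{3} < 1,
\end{equation*}
so $\mathbf{x}^{(0)}$ is attracting.

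For part (ii), assume $|q|_3 < 1/3$ so that $\mathbf{x}^{(1)}$ exists. Substituting $\mathbf{x}^{(1)} = \mathbf{x}^{(\infty)} + (\theta-1)\mathbf{y}^{(1)}$ into the derivative formula yields the analogue of \eqref{derivativefixed2}:
\begin{equation*}
f'_{\theta, q, 3}(\mathbf{x}^{(1)})=\frac{3(\theta-1+q)\mathbf{x}^{(1)}}{(\theta-1)\mathbf{y}^{(1)}(\theta \mathbf{y}^{(1)}+1-\theta-q)}.
\end{equation*}
The decisive input is Proposition~\ref{rootp=3}: the estimate $|\mathbf{y}^{(1)}-3|_3 < 1/3$ forces $|\mathbf{y}^{(1)}|_3 = 1/3$, and more precisely $\mathbf{y}^{(1)} \equiv 3 \pmod{9}$. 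Writing $\theta\mathbf{y}^{(1)}+1-\theta-q = (\mathbf{y}^{(1)}-q) + (\theta-1)(\mathbf{y}^{(1)}-1)$ and exploiting the scale separation $|\theta-1|_3 < |q|_3 \leq 1/9 < |\mathbf{y}^{(1)}|_3 = 1/3$ shows that this factor has norm exactly $1/3$. Combining with $|\mathbf{x}^{(1)}|_3 = 1$ and $|\theta-1+q|_3 = |q|_3$, the $3$-adic norm of the derivative collapses to
\begin{equation*}
\left|f'_{\theta,q,3}(\mathbf{x}^{(1)})\right|_3 = \frac{3|q|_3}{|\theta-1|_3} \geq 9 > 1,
\end{equation*}
where the last inequality uses $|\theta-1|_3 \leq |q|_3/3$. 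Hence $\mathbf{x}^{(1)}$ is repelling.

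No real obstacle is anticipated: the entire argument is routine ultrametric bookkeeping. The only subtle step is the identity $|\theta \mathbf{y}^{(1)} + 1 - \theta - q|_3 = 1/3$, which is forced immediately by the explicit approximation $\mathbf{y}^{(1)} \equiv 3 \pmod{9}$ from Proposition~\ref{rootp=3}. It is worth noting that the hypothesis $|q|_3 < 1/3$ is used twice: once to ensure the existence of $\mathbf{x}^{(1)}$, and once to guarantee the strict separation $|q|_3/|\theta-1|_3 \geq 3$ that makes the derivative exceed $1$ in absolute value.
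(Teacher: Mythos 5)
Your proof is correct and follows essentially the same route as the paper: evaluate \eqref{derivative} at $\mathbf{x}^{(0)}$ and \eqref{derivativefixed2} at $\mathbf{x}^{(1)}$ and take $3$-adic norms, keeping track of the extra factor $|3|_3=\tfrac{1}{3}$. You are in fact more careful than the paper's own proof, which misquotes Proposition \ref{rootp=3} as giving $|\mathbf{y}^{(1)}|_3=1$; your use of the correct value $|\mathbf{y}^{(1)}|_3=\tfrac{1}{3}$ (forced by $|\mathbf{y}^{(1)}-3|_3<\tfrac{1}{3}$) together with the explicit verification that $|\theta\mathbf{y}^{(1)}+1-\theta-q|_3=\tfrac{1}{3}$ is exactly what is needed to arrive at the stated norm $\frac{3|q|_3}{|\theta-1|_3}>1$.
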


\begin{proof}
By \eqref{derivative}, $\left|f'_{\theta, q, 3}(\mathbf{x}^{(0)})\right|_3=\frac{|\theta-1|_3}{3|q|_3} < 1$. Due to Proposition \ref{rootp=3}, we have $\left|\mathbf{y}^{(1)}\right|_3=1$ and $|\mathbf{x}^{(1)}|_3=1$. Hence, by \eqref{derivativefixed2}, $\left|f'_{\theta, q, 3}(\mathbf{x}^{(1)})\right|_3=\frac{3|q|_3}{|\theta-1|_3} > 1$.
\end{proof}

\subsubsection{Attracting basin of the attracting fixed point}
We describe
$$\mathfrak{B}(\mathbf{x}^{(0)}):=\left\{ x \in \mathbb{Q}_3 : \lim_{n \to +\infty}f^{n}_{\theta, q, 3}(x)=\mathbf{x}^{(0)}\right\}.$$

We introduce the following sets
\begin{eqnarray*}
\mathcal{A}_0 &:=& \left\{ x \in \mathbb{Q}_3 : \left|x-\mathbf{x}^{(0)}\right|_p < |q|_3 \right\}, \\ 
\mathcal{A}_1 &:=& \left\{ x \in \mathbb{Q}_3 : \left|x-\mathbf{x}^{(0)}\right|_3 > |q|_3 \right\}, \\ 
\mathcal{A}_{0,\infty} &:=& \left\{ x \in \mathbb{Q}_3 : \left|x-\mathbf{x}^{(\infty)}\right|_3 = \left|x-\mathbf{x}^{(0)}\right|_3 = |q|_3 \right\}, \\ 
\mathcal{A}_2 &:=& \left\{ x \in \mathbb{Q}_3 : \frac{\left|\theta-1\right|_3}{3} < \left|x-\mathbf{x}^{(\infty)}\right|_3 < |q|_3 \right\},\\
\mathcal{A}_{1,\infty} &:=& \left\{x \in \mathbb{Q}_3 : \left|x-\mathbf{x}^{(\infty)}\right|_3 = \frac{\left|\theta-1\right|_3}{3} \right\}, \\
\mathcal{A}_{\infty} &:=& \left\{ x \in \mathbb{Q}_3 : \ 0<\left|x-\mathbf{x}^{(\infty)}\right|_3 < \frac{\left|\theta-1\right|_3}{3} \right\}.
\end{eqnarray*}
We have $\textup{\textbf{Dom}}\{f_{\theta,q,3}\}=\mathcal{A}_0 \cup \mathcal{A}_1\cup \mathcal{A}_2\cup \mathcal{A}_{1,\infty}\cup \mathcal{A}_{0,\infty} \cup \mathcal{A}_{\infty}.$
Moreover, $\mathcal{A}_{1,\infty} = \mathcal{A}_{1,\infty}^{(1)} \cup \mathcal{A}_{1,\infty}^{(2)}$ where
\begin{eqnarray*}
\mathcal{A}_{1,\infty}^{(1)} &:=& \left\{ x \in \mathbb{Q}_3 : \left|x-\mathbf{x}^{(1)}\right|_3 = \left|x-\mathbf{x}^{(\infty)}\right|_3 = \frac{\left|\theta-1\right|_3}{3} \right\}, \\
\mathcal{A}_{1,\infty}^{(2)} &:=& \left\{x \in \mathbb{Q}_3 : \left|x-\mathbf{x}^{(1)}\right|_3 < \frac{\left|\theta-1\right|_3}{3} \right\}.
\end{eqnarray*}

Denote $\mathbf{x}^{(\infty)}=2-q-\theta.$ The following properties of the fixed points $\mathbf{x}^{(0)}$ and $\mathbf{x}^{(1)}$ (when exists, i.e., $|q|_3<\frac{1}{3}$) will be used:
\begin{itemize}
\item[(i)]$\mathbf{x}^{(1)}=\mathbf{x}^{(\infty)}+(\theta-1)\mathbf{y}^{(1)}$;
\item[(ii)] $|\mathbf{x}^{(\infty)}|_3=|\mathbf{x}^{(i)}|_3=1,\ \mathbf{x}^{(\infty)},\mathbf{x}^{(i)}\in\mathcal{E}_3$ for $i=0,1\ |\mathbf{y}^{(1)}|_3=\frac{1}{3},\ |\mathbf{y}^{(1)}-3|_3<\frac{1}{3};$ 
\item[(iii)] $|\mathbf{x}^{(0)}-\mathbf{x}^{(\infty)}|_3=|q|_3,\ |\mathbf{x}^{(1)}-\mathbf{x}^{(\infty)}|_3=\frac{|\theta-1|_3}{3}$ and $|\mathbf{x}^{(1)}-\mathbf{x}^{(0)}|_3=|q|_3;$
\item[(iv)] $\mathbf{x}^{(0)} \in \mathcal{A}_0,\ \mathbf{x}^{(1)} \in \mathcal{A}_{1,\infty}$.
\end{itemize}  

\begin{proposition}\label{p=3}
Let $0<|\theta-1|_3 < |q|_3 < 1$. Then the following hold:
\begin{itemize}
\item[(i)] $\mathcal{A}_0 \cup \mathcal{A}_1 \cup \mathcal{A}_2 \cup \mathcal{A}_{0,\infty}\subset f^{-1}_{\theta,q,3}\left(\mathcal{A}_0\right);$
\item[(ii)] $\mathcal{A}_{\infty}\subset f^{-1}_{\theta,q,3}\left(\mathcal{A}_{1} \right);$
\item[(iii)] for $|q|_3 = \frac{1}{3}$ we have $\mathcal{A}_{1,\infty}\subset f^{-1}_{\theta,q,3}\left(\mathcal{A}_{1} \right)$.
\end{itemize} 
\end{proposition}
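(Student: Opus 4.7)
The plan is to mimic the proof of Proposition \ref{p=1mod3}, estimating $|f_{\theta,q,3}(x)-\mathbf{x}^{(0)}|_3$ case by case via formulas \eqref{fminusx0} and \eqref{IsingPottsSingular2}. The novelty at $p=3$ is that $|3|_3=\tfrac{1}{3}$, and since $\theta\in\mathcal{E}_3$ we may write $\theta=1+3^s\theta^*$ with $s\ge 1$, so a direct expansion gives
\[
|\theta^2+\theta+1|_3=|2\theta+1|_3=\tfrac{1}{3},
\]
instead of $1$ as in the $p\equiv 1\pmod 3$ setting. These extra factors of $1/3$ are comfortably absorbed by the standing hypothesis $|\theta-1|_3<|q|_3\le\tfrac{1}{3}$, which forces $|\theta-1|_3\le |q|_3/3\le 1/9$.

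For Part (i), I would first rewrite $g(x)$ in \eqref{fminusx0} using the identity $(\theta x+q-1)-(x-\mathbf{x}^{(\infty)})=(\theta-1)(x-\mathbf{x}^{(0)})$ as
\[
g(x)=3v^2+3(\theta-1)(x-\mathbf{x}^{(0)})v+(\theta-1)^2(x-\mathbf{x}^{(0)})^2,\quad v:=x-\mathbf{x}^{(\infty)}.
\]
On $\mathcal{A}_0$, $\mathcal{A}_1$ and $\mathcal{A}_{0,\infty}$ the term $3v^2$ dominates (using $|\theta-1|_3^2\le 1/81<1/3$), so $|g(x)|_3=|v|_3^2/3$ and $|f_{\theta,q,3}(x)-\mathbf{x}^{(0)}|_3\le|\theta-1|_3/3<|q|_3$. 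For $\mathcal{A}_2$ I would pass to \eqref{IsingPottsSingular2}: the bound $|x-\mathbf{x}^{(\infty)}|_3>|\theta-1|_3/3$ gives $|\epsilon|_3<1$ where $\epsilon=(\theta-1)(1-\theta-q)/(x-\mathbf{x}^{(\infty)})$, so $a:=\theta+\epsilon\in\mathcal{E}_3$; writing $g_1(x)=a^2+a+1=(a-1)^2+3(a-1)+3$ then yields $|g_1(x)|_3=1/3$, and \eqref{IsingPottsSingular2} gives $|f_{\theta,q,3}(x)-\mathbf{x}^{(0)}|_3=|\epsilon|_3/3<|q|_3$.

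For Parts (ii) and (iii), formula \eqref{IsingPottsSingular2} reduces (since $|x-\mathbf{x}^{(0)}|_3=|q|_3$ on $\mathcal{A}_\infty\cup\mathcal{A}_{1,\infty}$) to $|f_{\theta,q,3}(x)-\mathbf{x}^{(0)}|_3=|\epsilon|_3\,|g_1(x)|_3$. Expanding $g_1=\theta^2+\theta+1+\epsilon(2\theta+1)+\epsilon^2$, I would then split by $|\epsilon|_3$. On $\mathcal{A}_\infty$ the defining inequality $|x-\mathbf{x}^{(\infty)}|_3<|\theta-1|_3/3$ gives $|\epsilon|_3>3|q|_3$, hence $|\epsilon|_3\ge 9|q|_3$. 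When $|\epsilon|_3\le 1/3$ the constant term dominates, $|g_1|_3=1/3$, and $|f(x)-\mathbf{x}^{(0)}|_3=|\epsilon|_3/3\ge 3|q|_3>|q|_3$. When $|\epsilon|_3=1$ the $\epsilon^2$ term dominates (the other two having absolute value $1/3$), so $|g_1|_3=1$ and $|f(x)-\mathbf{x}^{(0)}|_3=1$. When $|\epsilon|_3>1$ the same term dominates, $|g_1|_3=|\epsilon|_3^2$, and $|f(x)-\mathbf{x}^{(0)}|_3=|\epsilon|_3^3>1$. In every case $f(x)\in\mathcal{A}_1$, proving (ii). Part (iii) is then immediate: when $|q|_3=1/3$ and $x\in\mathcal{A}_{1,\infty}$, the equality $|x-\mathbf{x}^{(\infty)}|_3=|\theta-1|_3/3$ forces $|\epsilon|_3=3|q|_3=1$, which is precisely the middle case and yields $|f(x)-\mathbf{x}^{(0)}|_3=1>|q|_3$.

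The main technical point is the middle case $|\epsilon|_3=1$ of Part (ii): the three summands of $g_1$ have absolute values $1/3,\,1/3,\,1$, and one must rule out any cancellation that could lower $|g_1|_3$ below $1$. This is guaranteed by the ultrametric triangle inequality because the dominant summand $\epsilon^2$ strictly exceeds the other two in absolute value. Everything else is a routine bookkeeping of powers of $3$, directly paralleling the proof of Proposition \ref{p=1mod3}.
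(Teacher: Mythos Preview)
Your proof is correct. The ingredients are the same as in the paper's proof---formulas \eqref{fminusx0} and \eqref{IsingPottsSingular2}, together with the $3$-adic evaluations $|\theta^2+\theta+1|_3=|2\theta+1|_3=\tfrac{1}{3}$---but your organization is tighter in two places. In Part (i) you exploit the algebraic identity $g(x)=3v^2+3wv+w^2$ (with $w=(\theta-1)(x-\mathbf{x}^{(0)})$) to pin down $|g(x)|_3$ exactly on $\mathcal{A}_0\cup\mathcal{A}_1\cup\mathcal{A}_{0,\infty}$, whereas the paper only bounds $|g(x)|_3\le|q|_3^2$ (resp.\ $|x-\mathbf{x}^{(0)}|_3^2$) and loses a harmless factor of $3$. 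More notably, in Part (ii) you parametrize directly by $|\epsilon|_3$ and split into the three ranges $|\epsilon|_3\le\tfrac{1}{3}$, $|\epsilon|_3=1$, $|\epsilon|_3>1$; the paper instead first splits on $|q|_3\in\{\tfrac{1}{3},\tfrac{1}{9}\}$ versus $|q|_3<\tfrac{1}{9}$ and then on subannuli of $\mathcal{A}_\infty$, which leads to more cases and some repetition (the $y$-substitution argument appears twice). Your trichotomy in $|\epsilon|_3$ subsumes all of these uniformly and makes Part (iii) a one-line specialization. The paper's route has the minor advantage that its subcases correspond to geometrically natural annuli around $\mathbf{x}^{(\infty)}$, but your approach is both shorter and more transparent about why the argument works for every admissible $|q|_3$ simultaneously.
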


\begin{proof}
(i) Let us first show that $f_{\theta,q,3}(x) \in \mathcal{A}_0$ for any $x \in \mathcal{A}_0 \cup \mathcal{A}_1$.
In fact, we have
$$
|x-\mathbf{x}^{(\infty)}|_3=\left|x-\mathbf{x}^{(0)}+q+(\theta-1)\right|_3=
\begin{cases}
|q|_3, & x \in \mathcal{A}_0, \\
|x-\mathbf{x}^{(0)}|_3, & x \in \mathcal{A}_1, \\
\end{cases}
$$
$$
|\theta x+q-1|_3=\left|\theta(x-\mathbf{x}^{(0)})+q+(\theta-1)\right|_3=
\begin{cases}
|q|_3, & x \in \mathcal{A}_0, \\
|x-\mathbf{x}^{(0)}|_3, & x \in \mathcal{A}_1, \\
\end{cases}
$$
which imply
$$|g(x)|_3 \leq
\begin{cases}
|q|_3^2, & x \in \mathcal{A}_0, \\
|x-\mathbf{x}^{(0)}|_3^2, & x \in \mathcal{A}_1. \\
\end{cases}
$$
Therefore, by \eqref{fminusx0} 
$$
|f_{\theta,q,3}(x)-\mathbf{x}^{(0)}|_3 \leq
\left\{
\begin{array}{cc}
\frac{|x-\mathbf{x}^{(0)}|_3}{|q|_3}|\theta-1|_3, & x \in \mathcal{A}_0 \\
|\theta-1|_3, & x \in 
\mathcal{A}_1
\end{array}
\right\}
\leq |\theta-1|_3<|q|_3.
$$

Now, let us show $f_{\theta,q,3}(x) \in \mathcal{A}_0$ for any $x \in \mathcal{A}_{0,\infty}$. In fact
\begin{eqnarray*}
|x-\mathbf{x}^{(0)}|_p&=&|x-\mathbf{x}^{(\infty)}|_3=|q|_3,\\	
|\theta x+q-1|_3&=&\left|\theta(x-\mathbf{x}^{(\infty)})+(\theta-1)(1-\theta-q)\right|_3=|q|_3.
\end{eqnarray*}
Then $|g(x)|_3\leq |q|^2_3$. Thus, by \eqref{fminusx0}, for any $x \in \mathcal{A}_{0,\infty},\ $ $|f_{\theta,q,3}(x)-\mathbf{x}^{(0)}|_3 \leq {|\theta-1|_3}<|q|_3$.

Next, we show $f_{\theta,q,3}(x) \in \mathcal{A}_0$ for any $x \in \mathcal{A}_2.$  Indeed, for any $x \in \mathcal{A}_2,$ we have 
$$
\left|\frac{(\theta-1)(1-\theta-q)}{x-\mathbf{x}^{(\infty)}}\right|_3=\frac{|q|_3|\theta-1|_3}{|x-\mathbf{x}^{(\infty)}|_3} < 3|q|_3
$$
which implies $|g_1(x)|_3 \leq \frac{1}{3}$. Therefore, by \eqref{IsingPottsSingular}
$$\left|f_{\theta, q, 3}(x)-\mathbf{x}^{(0)}\right|_3\leq\frac{\left|\theta-1\right|_3|q|_3}{3\left|x-\mathbf{x}^{(\infty)}\right|_3}<|q|_3.$$ This means $f_{\theta,q,3}(x) \in \mathcal{A}_0$ for any $x \in \mathcal{A}_0 \cup \mathcal{A}_1 \cup \mathcal{A}_{2} \cup \mathcal{A}_{0,\infty}$. Hence, the assertion follows.

(ii) We show $f_{\theta,q,3}(x) \in \mathcal{A}_1$ for any $x \in \mathcal{A}_{\infty}.$ We distinguish three cases: $|q|_3=\frac{1}{3},\ |q|_3=\frac{1}{9}$ and $|q|_3<\frac{1}{9}$.

\textsc{Case 1:} Let $|q|_3=\frac{1}{3}$. For any $x \in \mathcal{A}_{\infty}$ 
$$
\left|\frac{(\theta-1)(1-\theta-q)}{x-\mathbf{x}^{(\infty)}}\right|_3=\frac{|q|_3|\theta-1|_3}{|x-\mathbf{x}^{(\infty)}|_3}=\frac{|\theta-1|_3}{3|x-\mathbf{x}^{(\infty)}|_3}>1
$$
which implies $\left|f_{\theta,q,3}(x)\right|_3=\left|\theta+\frac{(\theta-1)(1-\theta-q)}{x-\mathbf{x}^{(\infty)}}\right|_3^3 > 1.$ Then, by \eqref{IsingPottsSingular} $$\left|f_{\theta,q,3}(x)-\mathbf{x}^{(0)}\right|_3=\left|f_{\theta,q,3}(x)\right|_3>1>|q|_3.$$ Hence, $f_{\theta,q,3}\left(\mathcal{A}_{\infty}\right)\subset\mathcal{A}_1$.

\textsc{Case 2:} Let $|q|_3=\frac{1}{9}$. Then for any $x \in \mathcal{A}_{\infty}$ such that $|x-\mathbf{x}^{(\infty)}|_3<\frac{|\theta-1|_3}{9}$ we have
$$
\left|\frac{(\theta-1)(1-\theta-q)}{x-\mathbf{x}^{(\infty)}}\right|_3=\frac{|q|_3|\theta-1|_3}{|x-\mathbf{x}^{(\infty)}|_3}=\frac{|\theta-1|_3}{9|x-\mathbf{x}^{(\infty)}|_3}>1
$$
which implies $\left|f_{\theta,q,3}(x)\right|_3=\left|\theta+\frac{(\theta-1)(1-\theta-q)}{x-\mathbf{x}^{(\infty)}}\right|_3^3 > 1.$ Thus, by \eqref{IsingPottsSingular} $$\left|f_{\theta,q,3}(x)-\mathbf{x}^{(0)}\right|_3=\left|f_{\theta,q,3}(x)\right|_3>1>|q|_3.$$

Next, we consider for any $x \in \mathcal{A}_{\infty}$ such that $|x-\mathbf{x}^{(\infty)}|_3=\frac{|\theta-1|_3}{9}.$ Let $y:=\frac{x-\mathbf{x}^{(\infty)}}{\theta -1}.$ Then, $|y|_3=|q|_3=\frac{1}{9}$ and $y=\frac{y^{*}}{|q|_p}.$ In \eqref{fminusx0},
\begin{eqnarray*}
g(x)&=&(\theta x+q-1)^2+(\theta x+q-1)(x-\mathbf{x}^{(\infty)})+(x-\mathbf{x}^{(\infty)})^2\\
&=&(\theta-1)^2\left[(\theta^2+\theta+1)y^2+(2\theta+1)(1-\theta-q)y+(1-\theta-q)^2\right]\\
&=&\frac{(\theta-1)^2}{|q|^2_3}\left[(\theta^2+\theta+1)(y^{*})^2+(2\theta+1)(1-\theta-q)^{*}(y^{*})+\left((1-\theta-q)^{*}\right)^2\right].
\end{eqnarray*} Hence, $|g(x)|_3=|q|^2_3|\theta-1|^2_3$ for any $x \in \mathcal{A}_{\infty}$. By \eqref{fminusx0}, $|f_{\theta,q,3}(x)-\mathbf{x}^{(0)}|_3=1>|q|_3$ for any $x\in \mathcal{A}_{\infty}$ which means $f_{\theta,q,3}\left(\mathcal{A}_{\infty}\right)\subset\mathcal{A}_1.$

\textsc{Case 3:} Let $|q|_3<\frac{1}{9}$. Note $\mathcal{A}_\infty = \mathcal{A}_{\infty}^{(1)} \cup \mathcal{A}_{\infty}^{(2)}$ where
\begin{eqnarray*}
\mathcal{A}_{\infty}^{(1)} &:=& \left\{ x \in \mathbb{Q}_3 : 3|q|_3{\left|\theta-1\right|_3} \leq \left|x-\mathbf{x}^{(\infty)}\right|_3 < \frac{\left|\theta-1\right|_3}{3} \right\}, \\
\mathcal{A}_{\infty}^{(2)} &:=& \left\{x \in \mathbb{Q}_3 : \left|x-\mathbf{x}^{(\infty)}\right|_3 < 3|q|_3{\left|\theta-1\right|_3} \right\}.
\end{eqnarray*}

We need to show that for any $x \in \mathcal{A}_{\infty}^{(1)}\left(\mathcal{A}_{\infty}^{(2)}\right),\ f_{\theta,q,3}(x) \in \mathcal{A}_1.$ Suppose $x \in \mathcal{A}_{\infty}^{(1)}.$ We recall $g_1$ in \eqref{IsingPottsSingular2} and write it as
\begin{eqnarray}\label{g_x}
g_1(x)&=&\left(\theta+\frac{(\theta-1)(1-\theta-q)}{x-\mathbf{x}^{(\infty)}}\right)^2+\left(\theta+\frac{(\theta-1)(1-\theta-q)}{x-\mathbf{x}^{(\infty)}}\right)+1 \nonumber \\
&=& \left(\theta^2+\theta+1\right)+(2\theta+1)\left(\frac{(\theta-1)(1-\theta-q)}{x-\mathbf{x}^{(\infty)}}\right) \nonumber \\
&& +\left(\frac{(\theta-1)(1-\theta-q)}{x-\mathbf{x}^{(\infty)}}\right)^2.
\end{eqnarray}
We need the fact
\begin{eqnarray}
\left|\frac{(\theta-1)(1-\theta-q)}{x-\mathbf{x}^{(\infty)}}\right|_3=\frac{|\theta-1|_3|q|_3}{|x-\mathbf{x}^{(\infty)}|_3}\leq\frac{1}{3},\quad |\theta^2+\theta+1|_p=\frac{1}{3} ,\quad |2\theta+1|_3 = \frac{1}{3} \nonumber
\end{eqnarray}
which imply, by \eqref{g_x}, $\left|g_1(x)\right|_3 = \frac{1}{3}.$ Note $|x-\mathbf{x}^{(0)}|_3=|q|_3$.
Then, by $3|q|_3{\left|\theta-1\right|_3} \leq \left|x-\mathbf{x}^{(\infty)}\right|_3 < \frac{\left|\theta-1\right|_3}{3}$ and \eqref{IsingPottsSingular2}, $$|q|_3 < \left|f_{\theta,q,3}(x)-\mathbf{x}^{(0)}\right|_3= \frac{|\theta-1|_3|x-\mathbf{x}^{(0)}|_3}{|x-\mathbf{x}^{(\infty)}|_3}|g_1(x)|_3 \leq \frac{1}{9}.$$ Therefore, $f_{\theta,q,3}(x) \in \mathcal{A}_1.$

Suppose $x \in \mathcal{A}_{\infty}^{(2)}.$ We consider $|x-\mathbf{x}^{(\infty)}|_3<{|q|_3|\theta-1|_3}.$ Then we have $$
\left|\frac{(\theta-1)(1-\theta-q)}{x-\mathbf{x}^{(\infty)}}\right|_3=\frac{|q|_3|\theta-1|_3}{|x-\mathbf{x}^{(\infty)}|_3}>1
$$ which implies by \eqref{IsingPottsSingular}, $\left|f_{\theta,q,3}(x)\right|_3=\left|\theta+\frac{(\theta-1)(1-\theta-q)}{x-\mathbf{x}^{(\infty)}}\right|_3^3 > 1 > |q|_3.$ Therefore, since $|\mathbf{x}^{(0)}|_3=1$ we have $\left|f_{\theta,q,3}(x)-\mathbf{x}^{(0)}\right|_3=\left|f_{\theta,q,3}(x)\right|_3>1>|q|_3$.

Now, consider for $|x-\mathbf{x}^{(\infty)}|_3={|q|_3|\theta-1|_3}.$ Let $y=\frac{x-\mathbf{x}^{(\infty)}}{\theta -1}.$ Then, $|y|_3=|q|_3.$ Write $y=\frac{y^{*}}{|q|_3}$ and in \eqref{fminusx0} we have
\begin{eqnarray*}
g(x)&=&(\theta x+q-1)^2+(\theta x+q-1)(x-\mathbf{x}^{(\infty)})+(x-\mathbf{x}^{(\infty)})^2\\
&=&(\theta-1)^2\left[(\theta^2+\theta+1)y^2+(2\theta+1)(1-\theta-q)y+(1-\theta-q)^2\right]\\
&=&\frac{(\theta-1)^2}{|q|^2_3}\left[(\theta^2+\theta+1)(y^{*})^2+(2\theta+1)(1-\theta-q)^{*}(y^{*})+\left((1-\theta-q)^{*}\right)^2\right].
\end{eqnarray*}
Then, $|g(x)|_3=|q|^2_3|\theta-1|^2_3$ for any $x \in \mathcal{A}_{\infty}^{(2)}$. Hence, by \eqref{fminusx0}, $|f_{\theta,q,3}(x)-\mathbf{x}^{(0)}|_3=1>|q|_3$ for any $x\in \mathcal{A}_{\infty}^{(2)}$ which implies $f_{\theta,q,3}\left(\mathcal{A}_{\infty}\right)\subset\mathcal{A}_1.$

(iii) Suppose $|q|_3=\frac{1}{3}$. Let $x \in \mathcal{A}_{1,\infty}$. We have
\begin{eqnarray*}
|\theta^2+\theta+1|_3 = \frac{1}{3}, \quad |2\theta+1|_3 = \frac{1}{3}, \quad \frac{|\theta-1|_3|q|_3}{|x-\mathbf{x}^{(\infty)}|_3} = 1.
\end{eqnarray*}
Then, by \eqref{g_x}, $\left|g_1(x)\right|_3 = 1$ and by \eqref{IsingPottsSingular2} $\left|f_{\theta,q,3}(x)-\mathbf{x}^{(0)}\right|_3=\frac{\left|\theta-1\right|_3\left|x-\mathbf{x}^{(0)}\right|_3}{\left|x-\mathbf{x}^{(\infty)}\right|_3} = 1 > |q|_3.$ Therefore, $f_{\theta,q,3}(x) \in \mathcal{A}_1$ and the assertion holds. 
\end{proof}

\begin{proposition}\label{p=3_1}
If $|q|_3 < \frac{1}{3},$ then $\mathcal{A}_{1,\infty}^{(1)}\subset f^{-1}_{\theta,q,3}\left(\mathcal{A}_{0,\infty} \right)$. Moreover, for any $x,y \in \mathcal{A}_{1,\infty}^{(2)}$,
\begin{itemize}
\item[(i)] $\left|f_{\theta,q,3}(x)-f_{\theta,q,3}(y)\right|_3 = \frac{3|q|_3}{|\theta-1|_3}|x-y|_3,$
\item[(ii)] there exist $n_0 \in \mathbb{N}$ such that $f^{n_0}_{\theta,q,3}(x) \notin \mathcal{A}_{1,\infty}^{(2)}$.
\end{itemize}
\end{proposition}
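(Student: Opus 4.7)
The plan is to treat the three assertions in the same spirit as Proposition \ref{p=1mod3}(iii) and Proposition \ref{ratiooffandx}, being careful to track where the prime $p=3$ introduces the factor of $3$ coming from $|3|_3=1/3$.

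For the inclusion $\mathcal{A}_{1,\infty}^{(1)}\subset f^{-1}_{\theta,q,3}(\mathcal{A}_{0,\infty})$, I will show both $|f_{\theta,q,3}(x)-\mathbf{x}^{(0)}|_3=|q|_3$ and $|f_{\theta,q,3}(x)-\mathbf{x}^{(\infty)}|_3=|q|_3$ for $x\in\mathcal{A}_{1,\infty}^{(1)}$. Starting from \eqref{IsingPottsSingular2}, set $\epsilon:=(\theta-1)(1-\theta-q)/(x-\mathbf{x}^{(\infty)})$, so that $|\epsilon|_3=|\theta-1|_3|q|_3/(|\theta-1|_3/3)=3|q|_3\leq 1/3$, and expand $g_1(x)=(\theta^2+\theta+1)+(2\theta+1)\epsilon+\epsilon^2$. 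Since $|\theta^2+\theta+1|_3=|2\theta+1|_3=1/3$ while $|\epsilon|_3\leq 1/3$ and $|\epsilon^2|_3\leq 1/9$, the strong triangle inequality yields $|g_1(x)|_3=1/3$. Combined with $|x-\mathbf{x}^{(0)}|_3=|q|_3$ and $|x-\mathbf{x}^{(\infty)}|_3=|\theta-1|_3/3$, equation \eqref{IsingPottsSingular2} gives $|f_{\theta,q,3}(x)-\mathbf{x}^{(0)}|_3=|q|_3$. For the second norm, I write $f_{\theta,q,3}(x)-\mathbf{x}^{(\infty)}=f_{\theta,q,3}(x)-\mathbf{x}^{(0)}+q+(\theta-1)$ and compute, as in the final paragraph of Proposition \ref{p=1mod3}(iii), that the leading $3$-adic coefficient of $f_{\theta,q,3}(x)-\mathbf{x}^{(0)}$ does not cancel $q$; since $|\theta-1|_3<|q|_3$, this forces $|f_{\theta,q,3}(x)-\mathbf{x}^{(\infty)}|_3=|q|_3$.

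For part (II)(i), I use the formula \eqref{distancexy} from Proposition \ref{ratiooffandx}, which gives
$$f_{\theta,q,3}(x)-f_{\theta,q,3}(y)=\frac{(\theta-1)(1-\theta-q)(y-x)}{(x-\mathbf{x}^{(\infty)})(y-\mathbf{x}^{(\infty)})}F_{\theta,q,3}(x,y),$$
and estimate $F_{\theta,q,3}(x,y)$ on $\mathcal{A}_{1,\infty}^{(2)}$. The key observation is that $|(\theta-1)/(x-\mathbf{x}^{(\infty)})|_3=3$, so the second term in $F_{\theta,q,3}$ has $3$-adic norm at most $|3\theta(1-\theta-q)|_3\cdot 3=|q|_3\leq 1/9$, and the third term has norm at most $9|q|_3^2\leq 1/9$, while $|3\theta^2|_3=1/3$. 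Thus $|F_{\theta,q,3}(x,y)|_3=1/3$, and substituting $|x-\mathbf{x}^{(\infty)}|_3=|y-\mathbf{x}^{(\infty)}|_3=|\theta-1|_3/3$ delivers the stated scaling constant $3|q|_3/|\theta-1|_3$.

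For part (II)(ii), since $\mathbf{x}^{(1)}\in\mathcal{A}_{1,\infty}^{(2)}$ is a fixed point, taking $y=\mathbf{x}^{(1)}$ in (i) yields $|f_{\theta,q,3}(x)-\mathbf{x}^{(1)}|_3=\bigl(3|q|_3/|\theta-1|_3\bigr)|x-\mathbf{x}^{(1)}|_3$. Because $|\theta-1|_3<|q|_3$ forces $|\theta-1|_3\leq|q|_3/3$, the expansion factor satisfies $3|q|_3/|\theta-1|_3\geq 9>1$. As long as the iterates remain in $\mathcal{A}_{1,\infty}^{(2)}$, we may apply this scaling inductively to obtain $|f^{n}_{\theta,q,3}(x)-\mathbf{x}^{(1)}|_3=(3|q|_3/|\theta-1|_3)^n|x-\mathbf{x}^{(1)}|_3$; for $x\ne\mathbf{x}^{(1)}$ the right-hand side eventually exceeds $|\theta-1|_3/3$, forcing $f^{n_0}_{\theta,q,3}(x)\notin\mathcal{A}_{1,\infty}^{(2)}$ for some minimal $n_0$. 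The only subtlety is the fixed point itself, which is implicitly excluded (consistent with $\mathbf{x}^{(1)}$ being repelling by Proposition \ref{behaviourfixedpointsp=3}); this, together with verifying that the expansion identity in (i) still applies at every step before the orbit exits the ball, is the only mildly delicate point in the argument.
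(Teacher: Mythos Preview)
Your proposal is correct and follows essentially the same route as the paper: you use the expansion $g_1(x)=(\theta^2+\theta+1)+(2\theta+1)\epsilon+\epsilon^2$ with the $p=3$ observation $|\theta^2+\theta+1|_3=|2\theta+1|_3=1/3$ to get $|g_1(x)|_3=1/3$, then the explicit decomposition of $f_{\theta,q,3}(x)-\mathbf{x}^{(0)}+q$ (as in Proposition~\ref{p=1mod3}(iii), now invoking $|\mathbf{y}^{(1)}-3|_3<1/3$ from Proposition~\ref{rootp=3}) for the second norm, and the difference formula \eqref{distancexy} with $|F_{\theta,q,3}|_3=1/3$ for part~(i). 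Your treatment of~(ii) is in fact slightly more careful than the paper's, since you explicitly note that $x=\mathbf{x}^{(1)}$ must be excluded for the statement to hold literally; the paper's one-line justification glosses over this.
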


\begin{proof}
We prove $f_{\theta,q,3}(x) \in \mathcal{A}_{0,\infty}$ for any $x \in \mathcal{A}_{1,\infty}^{(1)}.$

Suppose $x \in \mathcal{A}_{1,\infty}^{(1)}.$ We have
\begin{eqnarray*}
\left|\frac{(\theta-1)(1-\theta-q)}{x-\mathbf{x}^{(\infty)}}\right|_3=\frac{|\theta-1|_3|q|_3}{|x-\mathbf{x}^{(\infty)}|_3}\leq\frac{1}{3}, \quad
|2\theta+1|_3 = \frac{1}{3}
\end{eqnarray*}
which imply by \eqref{g_x}, $\left|g_1(x)\right|_3 = \frac{1}{3}$. Hence, by \eqref{IsingPottsSingular2} $$\left|f_{\theta,q,3}(x)-\mathbf{x}^{(0)}\right|_3=\frac{\left|\theta-1\right|_3\left|x-\mathbf{x}^{(0)}\right|_3}{\left|x-\mathbf{x}^{(\infty)}\right|_3}|g_1(x)|_3 = |q|_3.$$

Since $|\theta-1|_3<|q|_3$ and $|f_{\theta,q,3}(x)-\mathbf{x}^{(\infty)}|_3=|f_{\theta,q,3}(x)-\mathbf{x}^{(0)}+q+(\theta-1)|_3,$ it suffices to show $|f_{\theta,q,3}(x)-\mathbf{x}^{(0)}+q|_3=|q|_3.$ We have
\begin{multline*}
f_{\theta,q,3}(x)-\mathbf{x}^{(0)}+q=\frac{(\theta-1)(x-\mathbf{x}^{(0)})}{x-\mathbf{x}^{(\infty)}}\left(g_1(x)-3\right)\\+3\left((\theta-1)-\frac{(\theta-1)^2}{x-\mathbf{x}^{(\infty)}}\right)+\frac{q(\theta-1)(\mathbf{y}^{(1)}-3)}{x-\mathbf{x}^{(\infty)}}+q\frac{x-\mathbf{x}^{(1)}}{x-\mathbf{x}^{(\infty)}}.
\end{multline*}
Since $|\mathbf{y}^{(1)}-3|_3<\frac{1}{3},\ |x-\mathbf{x}^{(0)}|_3=|q|_3,\ |x-\mathbf{x}^{(\infty)}|_3=|x-\mathbf{x}^{(1)}|_3=\frac{|\theta-1|_3}{3},\ |g_1(x)-3|_3<\frac{1}{3},$ we have $|f_{\theta,q,3}(x)-\mathbf{x}^{(0)}+q|_3=|q|_3.$ This means $|f_{\theta,q,3}(x)-\mathbf{x}^{(\infty)}|_3=|q|_3$. Therefore, $\mathcal{A}_{1,\infty}^{(1)}\subset f^{-1}_{\theta,q,3}\left(\mathcal{A}_{0,\infty} \right).$

Now, we show the other two assertions.
(i) Suppose $\bar{x},\bar{\bar{x}}\in\mathcal{A}_{1,\infty}^{(2)}.$ We have
\begin{eqnarray*}
\left|3\theta(1-\theta-q)\left(\frac{\theta-1}{\bar{x}-\mathbf{x}^{\infty}}+\frac{\theta-1}{\bar{\bar{x}}-\mathbf{x}^{\infty}}\right)\right|_3\leq |q|_3 < \frac{1}{3},\\
\left|(1-\theta-q)^2\left(\frac{(\theta-1)^2}{\left(\bar{x}-\mathbf{x}^{\infty}\right)^2}+\frac{(\theta-1)^2}{\left(\bar{x}-\mathbf{x}^{\infty}\right)\left(\bar{\bar{x}}-\mathbf{x}^{\infty}\right)}+\frac{(\theta-1)^2}{\left(\bar{\bar{x}}-\mathbf{x}^{\infty}\right)^2}\right) \right|_3\leq(3|q|_3)^2 < \frac{1}{3}
\end{eqnarray*}
which imply $|F_{\theta, q, 3}(\bar{x},\bar{\bar{x}})|_3=\frac{1}{3}$. Thus, by \eqref{distancexy}
$$\left|f_{\theta, q, 3}(\bar{x}) -f_{\theta, q, 3}(\bar{\bar{x}})\right|_3=\frac{3|q|_3}{|\theta-1|_3}|\bar{x}-\bar{\bar{x}}|_3.$$

The assertion (ii) follows from the fact $\mathbf{x}^{(1)} \in \mathcal{A}_{1,\infty}^{(2)}$ and (i).
\end{proof}

We conclude the results in this subsection by the following theorem.

\begin{theorem}
We describe the attracting basin of $\mathbf{x}^{(0)}$ as follows.
\begin{itemize}
\item[(i)] If $|q|_3=\frac{1}{3},$ then
$$\mathfrak{B}(\mathbf{x}^{(0)})= \mathbb{Q}_3 \setminus \left(\bigcup_{n=0}^{+\infty}f^{-n}_{\theta, q, 3}\left\{\mathbf{x}^{(\infty)}\right\}\right).$$
\item[(ii)] If $|q|_3<\frac{1}{3},$ then
$$\mathfrak{B}(\mathbf{x}^{(0)}):= \mathbb{Q}_3 \setminus \left(\mathbf{x}^{(1)} \cup \bigcup_{n=0}^{+\infty}f^{-n}_{\theta, q, 3}\left\{\mathbf{x}^{(\infty)}\right\}\right).$$
\end{itemize}
\end{theorem}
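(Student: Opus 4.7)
The plan is to establish the reverse inclusion first, then the forward inclusion by a cascade through the decomposition of $\textup{\textbf{Dom}}\{f_{\theta,q,3}\}$ afforded by Propositions \ref{p=3} and \ref{p=3_1}. The reverse inclusion is immediate: when $|q|_3 < 1/3$, $\mathbf{x}^{(1)} \neq \mathbf{x}^{(0)}$ is a fixed point of $f_{\theta,q,3}$ and hence cannot lie in the basin, and any preimage of the singular point $\mathbf{x}^{(\infty)}$ has a forward orbit that exits $\textup{\textbf{Dom}}\{f_{\theta,q,3}\}$ in finitely many steps. The base case of the forward inclusion is $\mathcal{A}_0 \subset \mathfrak{B}(\mathbf{x}^{(0)})$: the estimate from the proof of Proposition \ref{p=3}(i), namely $|f_{\theta,q,3}(x)-\mathbf{x}^{(0)}|_3 \leq (|\theta-1|_3/|q|_3)|x-\mathbf{x}^{(0)}|_3$ for $x \in \mathcal{A}_0$, is a strict contraction since $|\theta-1|_3/|q|_3 < 1$, so iteration converges to $\mathbf{x}^{(0)}$.

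For part (i), with $|q|_3 = 1/3$, Proposition \ref{rootp=3} rules out any other fixed point, and the conclusions of Proposition \ref{p=3} show that every piece of $\textup{\textbf{Dom}}\{f_{\theta,q,3}\} = \mathcal{A}_0 \cup \mathcal{A}_1 \cup \mathcal{A}_2 \cup \mathcal{A}_{1,\infty} \cup \mathcal{A}_{0,\infty} \cup \mathcal{A}_\infty$ lands in $\mathcal{A}_0$ in at most two iterations: the sets $\mathcal{A}_0, \mathcal{A}_1, \mathcal{A}_2, \mathcal{A}_{0,\infty}$ map into $\mathcal{A}_0$ in one step by Proposition \ref{p=3}(i), while $\mathcal{A}_\infty$ and $\mathcal{A}_{1,\infty}$ map into $\mathcal{A}_1$ by Proposition \ref{p=3}(ii)--(iii) and then into $\mathcal{A}_0$. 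Hence every point of $\mathbb{Q}_3$ whose forward orbit avoids $\mathbf{x}^{(\infty)}$ converges to $\mathbf{x}^{(0)}$, giving the equality in (i).

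For part (ii), with $|q|_3 < 1/3$, the same cascade covers every set except $\mathcal{A}_{1,\infty}^{(2)}$, the open ball of radius $|\theta-1|_3/3$ centered at the repelling fixed point $\mathbf{x}^{(1)}$; indeed, $\mathcal{A}_{1,\infty}^{(1)} \subset f^{-1}_{\theta,q,3}(\mathcal{A}_{0,\infty})$ by the first assertion of Proposition \ref{p=3_1}. On $\mathcal{A}_{1,\infty}^{(2)}$, Proposition \ref{p=3_1}(i) gives the scaling $|f_{\theta,q,3}(x)-f_{\theta,q,3}(y)|_3 = \lambda|x-y|_3$ with $\lambda := 3|q|_3/|\theta-1|_3 > 1$; taking $y = \mathbf{x}^{(1)}$ and iterating yields $|f^n_{\theta,q,3}(x)-\mathbf{x}^{(1)}|_3 = \lambda^n|x-\mathbf{x}^{(1)}|_3$ as long as the orbit remains in $\mathcal{A}_{1,\infty}^{(2)}$. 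For $x \neq \mathbf{x}^{(1)}$, Proposition \ref{p=3_1}(ii) provides a minimal $n_0$ with $f^{n_0}_{\theta,q,3}(x) \notin \mathcal{A}_{1,\infty}^{(2)}$, and the scaling forces $|f^{n_0}_{\theta,q,3}(x)-\mathbf{x}^{(1)}|_3 \in [|\theta-1|_3/3,\, |q|_3)$. An ultrametric computation using $|\mathbf{x}^{(1)}-\mathbf{x}^{(\infty)}|_3 = |\theta-1|_3/3$ then places $f^{n_0}_{\theta,q,3}(x) \in \mathcal{A}_2 \cup \mathcal{A}_{1,\infty}^{(1)} \cup \mathcal{A}_\infty$, all of which have been shown above to enter $\mathcal{A}_0$ in a few iterations; this yields the equality in (ii). The principal obstacle is precisely this escape-and-landing step, since the strong triangle inequality is not sharp when the escape distance equals $|\theta-1|_3/3$, and one must verify that each possible landing region already belongs to the cascade converging to $\mathbf{x}^{(0)}$ rather than returning to $\mathcal{A}_{1,\infty}^{(2)}$.
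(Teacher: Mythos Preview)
Your proposal is correct and follows exactly the approach the paper intends: the paper's own proof is the single line ``It follows from Propositions~\ref{p=3} and~\ref{p=3_1},'' and you have simply unpacked that reference into the explicit cascade argument, including the escape-from-$\mathcal{A}_{1,\infty}^{(2)}$ step via the expansion factor $3|q|_3/|\theta-1|_3$. Your bound $|f^{n_0}_{\theta,q,3}(x)-\mathbf{x}^{(1)}|_3\in[|\theta-1|_3/3,|q|_3)$ together with $|\mathbf{x}^{(1)}-\mathbf{x}^{(\infty)}|_3=|\theta-1|_3/3$ does place the escaped point in $\mathcal{A}_2\cup\mathcal{A}_{1,\infty}^{(1)}\cup\mathcal{A}_\infty\cup\{\mathbf{x}^{(\infty)}\}$, and since you are arguing the forward inclusion for points whose orbit avoids $\mathbf{x}^{(\infty)}$, the last possibility is already excluded; so the ``principal obstacle'' you flag is in fact fully handled by your own argument.
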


\begin{proof}
It follows from Propositions \ref{p=3} and \ref{p=3_1}.
\end{proof}

\subsection{Case \texorpdfstring{$p=2$}{Lg}}

\subsubsection{Fixed points}
Let us find all possible roots of the cubic equation \eqref{wrty}. We always assume $0<|\theta-1|_2<|q|_2<1.$
\begin{proposition}\label{rootp=2}
The cubic equation \eqref{wrty} always has one root $\mathbf{y}^{(1)}$ over $\mathbb{Q}_2$ such that $|\mathbf{y}^{(1)}|_2 = 1.$
\end{proposition}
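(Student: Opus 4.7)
The plan is to apply Hensel's lemma (Lemma \ref{Hensel}) at the approximate root $y_0 = 1$, mirroring the strategy used for $p=3$. Writing $h(y) = y^3 - (1+\theta+\theta^2)y^2 - (2\theta+1)(1-\theta-q)y - (1-\theta-q)^2$, a direct expansion yields the tidy formulas $h(1) = -(q-1)(q-2)$ and $h'(1) = -3\theta + q(1+2\theta)$. These are the only routine calculations one really needs, so I would do them first.

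Next I would verify the $2$-adic divisibility conditions required by Hensel with $i=0$. Since the hypothesis $0<|\theta-1|_2<|q|_2<1$ forces $\theta$ to be odd (indeed $\theta\equiv 1\ ({\rm mod}\ 2)$) and $q$ to be even, it follows that $q-1$ is odd and $q-2$ is even, so $h(1)\equiv 0\ ({\rm mod}\ 2)$; on the other hand $-3\theta$ is odd and $q(1+2\theta)$ is even, so $h'(1)$ is odd, i.e.\ $h'(1)\not\equiv 0\ ({\rm mod}\ 2)$. Hensel's lemma then produces a unique $\mathbf{y}^{(1)}\in\mathbb{Z}_2$ with $h(\mathbf{y}^{(1)})=0$ and $\mathbf{y}^{(1)}\equiv 1\ ({\rm mod}\ 2)$; in particular $|\mathbf{y}^{(1)}|_2=1$, which is exactly the claim.

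As an independent sanity check, the Newton polygon argument (Lemma \ref{newton}), analogous to Case 2 of the proof of Proposition \ref{rootp=3}, confirms the picture: writing $|q|_2=2^{-t}$ with $t\geq 1$, the vertices $(0,2t),(1,t),(2,0),(3,0)$ produce a lower convex hull consisting of a segment of slope $-t$ and horizontal length $2$ and a segment of slope $0$ and horizontal length $1$. Hence $h$ has exactly one root in $\mathbb{C}_2$ of absolute value $1$ and two roots of absolute value $|q|_2$. Because Galois conjugates share the same absolute value, the irreducible factor of $h$ over $\mathbb{Q}_2$ containing the unique root of absolute value $1$ must itself be linear; this reconfirms $\mathbf{y}^{(1)}\in\mathbb{Q}_2$ independently of the Hensel computation.

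There is no real conceptual obstacle here: the main thing to be careful about is the bookkeeping in expanding $h(1)$ and $h'(1)$ (a sign or a coefficient going astray would destroy the parity argument), and ruling out any edge behavior in the borderline case $|q|_2=\tfrac12$. The latter is handled automatically, since the parities of $h(1)$ and $h'(1)$ do not depend on the precise $2$-adic size of $q$ beyond $|q|_2<1$.
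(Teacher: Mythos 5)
Your existence argument is correct and is essentially the paper's: Hensel's lemma (Lemma \ref{Hensel}) applied at $y=1$ with $i=0$. Your closed forms are right --- indeed $h(1)=-\left((\theta+u)^2+(\theta+u)\right)$ with $u=1-\theta-q$, i.e.\ $h(1)=-(1-q)(2-q)$, and $h'(1)=-3\theta+q(2\theta+1)$ --- and the parity argument then gives $h(1)\equiv 0$, $h'(1)\not\equiv 0\pmod 2$, hence a root $\mathbf{y}^{(1)}\equiv 1\pmod 2$ with $|\mathbf{y}^{(1)}|_2=1$. This is tidier bookkeeping than the paper's direct congruence check, and your Newton polygon computation (one root of absolute value $1$, two of absolute value $|q|_2$) matches the paper's first step. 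The Galois-conjugate remark correctly reconfirms that the valuation-one root lies in $\mathbb{Q}_2$, since it is the \emph{unique} root of that absolute value.

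The gap is that you have not shown $\mathbf{y}^{(1)}$ is the \emph{only} root of \eqref{wrty} in $\mathbb{Q}_2$, which is the content actually needed downstream: Proposition \ref{fixedpointsp=2} concludes $\mathbf{Fix}\{f_{\theta,q,3}\}=\{\mathbf{x}^{(0)},\mathbf{x}^{(1)}\}$ "directly from" this proposition, so the two remaining roots $\mathbf{y}^{(2)},\mathbf{y}^{(3)}$ must be excluded from $\mathbb{Q}_2$. Your two tools do not do this: their common absolute value $2^{-t}$ lies in $2^{\mathbb{Z}}$, so the Newton polygon does not obstruct rationality, and the conjugation argument is silent because there are \emph{two} roots sharing that absolute value (the quadratic factor could a priori split over $\mathbb{Q}_2$). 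The paper closes this by a direct computation: if $|y|_2=|q|_2=2^{-t}$, write $y=2^t y^{*}$ and $1-\theta-q=2^t(1-\theta-q)^{*}$; then $h(y)\equiv -2^{2t}\left[(1+\theta+\theta^2)(y^{*})^2+(2\theta+1)(1-\theta-q)^{*}y^{*}+\left((1-\theta-q)^{*}\right)^2\right] \pmod{2^{2t+1}}$, and since $\theta\equiv 1$ and all starred quantities are units, the bracket is $\equiv 1+1+1\equiv 1\pmod 2$, so $h(y)\neq 0$. Equivalently, the scaled quadratic factor reduces to $X^2+X+1$ over $\mathbb{F}_2$, which is irreducible. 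Adding this step makes your proof complete.
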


\begin{proof}
Let $|q|_2=2^{-t},\ |\theta-1|_2=2^{-s}.$ Then $s>r>0$. Write $q = 2^tq^{*}$ and $\theta = 1 + 2^s\theta^*$ for which $|q^{*}|_2=1,\ |\theta^*|_2=1$. In the cubic equation \eqref{wrty}, we let $a_0=-(1-\theta-q)^2,\ a_1=-(2\theta+1)(1-\theta-q),\ a_2=-(1+\theta+\theta^2)$ and $a_3=1$. Then $|a_0|_2=|q|_2^2,\ |a_1|_2=|q|_2,\ |a_2|_2=1,\ |a_3|_2=1$ and $-\log_2|a_0|_2=2t,\ -\log_2|a_1|_2=t,\ -\log_2|a_2|_2=0,\ -\log_2|a_3|_2=0$. We consider the following points on the plane, $(0,2t),\ (1,t),\ (2,0),\ (3,0)$ and construct the corresponding Newton polygon. We have the line from $(0,2t)$ to $(2,0)$ and from $(2,0)$ to $(3,0)$. Thus we have the slope $m_1=0$ with  length $l_1=1$ and the slope $m_2=-t$ with  length $l_2=2$. Thus, we have three roots $\mathbf{y}^{(1)},\ \mathbf{y}^{(2)},\ \mathbf{y}^{(3)} \in \mathbb{C}_2$ (the field of $2$-adic complex numbers) with $|\mathbf{y}^{(1)}|_2=1$ and $|\mathbf{y}^{(i)}|_2=2^{-r}$ for $i=2,3$ (including multiplicity).

Next, we show $\mathbf{y}^{(1)} \in \mathbb{Q}_2$ and $\mathbf{y}^{(2)},\mathbf{y}^{(3)} \not\in \mathbb{Q}_2$. Let us consider the following cubic polynomial and its derivative,
\begin{eqnarray*}
h(y) &=& y^3-(1+\theta+\theta^2)y^2-(2\theta+1)(1-\theta-q)y-(1-\theta-q)^2 \\
h^{\prime}(y) &=& 3y^2-2(1+\theta+\theta^2)y-(2\theta+1)(1-\theta-q).
\end{eqnarray*}
Let $\bar{\mathbf{y}}=1$. Then we get $h(\bar{\mathbf{y}})\equiv 0 \ (\rm{mod} \ 2)$ and $h^{\prime}(\bar{\mathbf{y}})\not\equiv 0 \ (\rm{mod} \ 2)$. According to Hensel's lemma, there is $\mathbf{y}^{(1)} \in \mathbb{Z}_2 \subset \mathbb{Q}_2$ such that $h(\mathbf{y}^{(1)})=0$ where $\mathbf{y}^{(1)} \equiv \bar{\mathbf{y}} \equiv 1 \ (\rm{mod} \ 2)$.

Now, suppose $\bar{\mathbf{y}} = 2^{r}y^{*}$ with $|y^{*}|_2=1$. We have
\begin{eqnarray*}
h(\bar{\mathbf{y}}) &=& 2^{3r}\left(y^{*}\right)^3 - 2^{2r}(1+\theta+\theta^2)\left(y^{*}\right)^2-2^{2r}(2\theta+1)(1-\theta-q)^{*}\left(y^{*}\right) \\ & & -\left((1-\theta-q)^{*}\right)^2 \\
& \equiv &  - 2^{2r}(1+\theta+\theta^2)\left(y^{*}\right)^2-2^{2r}(2\theta+1)(1-\theta-q)^{*}\left(y^{*}\right) \\ && - 2^{2r}\left((1-\theta-q)^{*}\right)^2 \ (\rm{mod} \ 2^{2r+1}) \\
& \equiv &  -(1+\theta+\theta^2)\left(y^{*}\right)^2-(2\theta+1)(1-\theta-q)^{*}\left(y^{*}\right) -\left((1-\theta-q)^{*}\right)^2 \ (\rm{mod} \ 2) \\
& \not\equiv & 0 \ (\rm{mod} \ 2).
\end{eqnarray*}
Then $h(\bar{\mathbf{y}}) \neq 0$ for any $\bar{\mathbf{y}} \in \mathbb{Q}_2,\ |\bar{\mathbf{y}}|_2=|q|_2=2^{-r}$. Since $|{\mathbf{y}^{(2)}}|_2=|{\mathbf{y}^{(3)}}|_2=|q|_2=2^{-r}$, we have $\mathbf{y}^{(2)},\mathbf{y}^{(3)} \not\in \mathbb{Q}_2$. Therefore, the cubic equation \eqref{wrty} always has one root $\mathbf{y}^{(1)}$ over $\mathbb{Q}_2$ 
\end{proof}

\begin{proposition}\label{fixedpointsp=2}
We have $\mathbf{Fix}\{f_{\theta, q, 3}\}=\{\mathbf{x}^{(0)}, \ \mathbf{x}^{(1)}\}$ where $\mathbf{x}^{(0)}=1,\ \mathbf{x}^{(1)}=1-q+(\theta-1)(\mathbf{y}^{(1)}-1)$ and $\mathbf{y}^{(1)}$ is a root of the cubic equation \eqref{wrty}.
\end{proposition}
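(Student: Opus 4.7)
The plan is to derive this immediately from Proposition \ref{rootp=2}, mirroring the argument used for Proposition \ref{fixedpoints} in the $p\equiv 1\pmod 3$ case. As noted at the start of Section \ref{section4}, one has $\mathbf{x}^{(0)}=1\in\mathbf{Fix}\{f_{\theta,q,3}\}$ automatically, and every other fixed point of $f_{\theta,q,3}$ satisfies the cubic equation \eqref{wrtx}. The substitution $y := \frac{x-1+q}{\theta-1}+1$, which inverts to $x = 1-q+(\theta-1)(y-1)$, is a bijection $\mathbb{Q}_2\to\mathbb{Q}_2$ transforming \eqref{wrtx} into \eqref{wrty}. Consequently, the additional fixed points of $f_{\theta,q,3}$ are in one-to-one correspondence with the roots of \eqref{wrty} lying in $\mathbb{Q}_2$.

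By Proposition \ref{rootp=2}, the cubic equation \eqref{wrty} has exactly one root in $\mathbb{Q}_2$, namely $\mathbf{y}^{(1)}$ with $|\mathbf{y}^{(1)}|_2=1$ (the other two roots $\mathbf{y}^{(2)},\mathbf{y}^{(3)}$ lie in $\mathbb{C}_2\setminus\mathbb{Q}_2$). Applying the inverse substitution to $\mathbf{y}^{(1)}$ yields the unique additional fixed point
\[
\mathbf{x}^{(1)} = 1-q+(\theta-1)(\mathbf{y}^{(1)}-1),
\]
and hence $\mathbf{Fix}\{f_{\theta,q,3}\} = \{\mathbf{x}^{(0)},\mathbf{x}^{(1)}\}$, as claimed.

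Since the entire content rests on the Newton polygon/Hensel analysis already carried out in Proposition \ref{rootp=2}, there is no real obstacle here; the proof amounts to invoking that proposition. The only minor point worth recording explicitly in the write-up is that the change of variables is defined (i.e.\ $\theta\neq 1$, which holds by the standing assumption $0<|\theta-1|_2<|q|_2<1$) and bijective on $\mathbb{Q}_2$, so counting roots of \eqref{wrty} in $\mathbb{Q}_2$ is the same as counting non-trivial fixed points of $f_{\theta,q,3}$ in $\mathbb{Q}_2$.
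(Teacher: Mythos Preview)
Your proof is correct and follows essentially the same approach as the paper, whose proof is the single line ``The proof follows directly from Proposition \ref{rootp=2}.'' You have simply made explicit the bijective change of variables from Section~\ref{section4} that links fixed points of $f_{\theta,q,3}$ to roots of \eqref{wrty} in $\mathbb{Q}_2$, which is exactly what the paper's one-line invocation is relying on.
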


\begin{proof}The proof follows directly from Proposition \ref{rootp=2}. \end{proof}

\begin{proposition}\label{behaviourfixedpointsp=2}
We have
\begin{itemize}
\item[(i)] $\mathbf{x}^{(0)}$ is an attracting fixed point;
\item[(ii)] $\mathbf{x}^{(1)}$ is a repelling fixed points.
\end{itemize}
\end{proposition}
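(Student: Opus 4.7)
The plan is to mimic Propositions \ref{behaviourfixedpoints} and \ref{behaviourfixedpointsp=3}: apply the derivative formula \eqref{derivative} at $\mathbf{x}^{(0)}$ and the equivalent form \eqref{derivativefixed2} at $\mathbf{x}^{(1)}$, then bound each factor using the standing hypothesis $0 < |\theta-1|_2 < |q|_2 < 1$ together with the root data from Proposition \ref{rootp=2}. A convenient feature of $p = 2$ is that $|3|_2 = 1$, so the factor $3$ in the derivative drops out under $|\cdot|_2$ and the computation is really a direct specialization of the earlier cases.

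For the attracting assertion, I substitute $\mathbf{x}^{(0)} = 1$ into \eqref{derivative}. Both $\theta\mathbf{x}^{(0)}+q-1$ and $\mathbf{x}^{(0)}+\theta+q-2$ reduce to $\theta-1+q$, whose norm is $|q|_2$ by the strong triangle inequality. Collecting factors gives $|f'_{\theta,q,3}(\mathbf{x}^{(0)})|_2 = |\theta-1|_2/|q|_2 < 1$, as desired. For the repelling assertion at $\mathbf{x}^{(1)} = \mathbf{x}^{(\infty)} + (\theta-1)\mathbf{y}^{(1)}$ I use \eqref{derivativefixed2}. Proposition \ref{rootp=2} yields $|\mathbf{y}^{(1)}|_2 = 1$, hence $|\mathbf{x}^{(1)}|_2 = 1$. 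The only factor not immediately controlled is $\theta\mathbf{y}^{(1)} + 1 - \theta - q$, which I rewrite as $\theta(\mathbf{y}^{(1)}-1) + (1-q)$; the Hensel step in Proposition \ref{rootp=2} produces the sharper congruence $\mathbf{y}^{(1)} \equiv 1 \pmod{2}$, so the first summand has norm at most $1/2$ while $|1-q|_2 = 1$, and the sum has norm $1$. Plugging in gives $|f'_{\theta,q,3}(\mathbf{x}^{(1)})|_2 = |q|_2/|\theta-1|_2 > 1$.

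I do not expect a genuine obstacle. The single delicate estimate is the one for $|\theta\mathbf{y}^{(1)} + 1 - \theta - q|_2$, which is precisely where the refined residue information $\mathbf{y}^{(1)} \equiv 1 \pmod 2$ from Proposition \ref{rootp=2} (rather than just the coarser $|\mathbf{y}^{(1)}|_2 = 1$ given by Newton polygon) is invoked; this plays here the role that the congruence \eqref{congruenty2y3} played for $\mathbf{x}^{(2)}, \mathbf{x}^{(3)}$ in the $p \equiv 1 \pmod{3}$ setting.
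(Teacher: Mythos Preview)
Your proof is correct and follows essentially the same route as the paper: apply \eqref{derivative} at $\mathbf{x}^{(0)}$ and \eqref{derivativefixed2} at $\mathbf{x}^{(1)}$, using $|\mathbf{y}^{(1)}|_2=1$ from Proposition~\ref{rootp=2}. One small remark: the detour through the refined congruence $\mathbf{y}^{(1)}\equiv 1\pmod 2$ is unnecessary, since writing $\theta\mathbf{y}^{(1)}+1-\theta-q$ as $(\theta\mathbf{y}^{(1)})+(1-\theta-q)$ already gives $|\theta\mathbf{y}^{(1)}|_2=1>|1-\theta-q|_2=|q|_2$, hence norm $1$ by the strong triangle inequality; this is the implicit step in the paper's (very terse) argument.
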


\begin{proof}
By \eqref{derivative}, $\left|f'_{\theta, q, 3}(\mathbf{x}^{(0)})\right|_2=\frac{|\theta-1|_2}{|q|_2} < 1$. Due to Proposition \ref{rootp=2}, we have $\left|\mathbf{y}^{(1)}\right|_2=1$ and $|\mathbf{x}^{(1)}|_2=1$. Therefore, by \eqref{derivativefixed2} we get $\left|f'_{\theta, q, 3}(\mathbf{x}^{(1)})\right|_2=\frac{|q|_2}{|\theta-1|_2} > 1$.
\end{proof}

\subsubsection{Attracting basin of the attracting fixed point}
We describe $$\mathfrak{B}(\mathbf{x}^{(0)}):=\left\{ x \in \mathbb{Q}_2 : \lim_{n \to +\infty}f^{n}_{\theta, q, 3}(x)=\mathbf{x}^{(0)}\right\}.$$

We introduce the following sets,
\begin{eqnarray*}
\mathcal{A}_0 &:=& \left\{ x \in \mathbb{Q}_2 : \left|x-\mathbf{x}^{(0)}\right|_2 < |q|_2 \right\}, \\ 
\mathcal{A}_1 &:=& \left\{ x \in \mathbb{Q}_2 : \left|x-\mathbf{x}^{(0)}\right|_2 > |q|_2 \right\}, \\ 
\mathcal{A}_{0,\infty} &:=& \left\{ x \in \mathbb{Q}_2 : \left|x-\mathbf{x}^{(\infty)}\right|_2 = \left|x-\mathbf{x}^{(0)}\right|_2 = |q|_2 \right\}, \\ 
\mathcal{A}_2 &:=& \left\{ x \in \mathbb{Q}_2 : \left|\theta-1\right|_2 < \left|x-\mathbf{x}^{(\infty)}\right|_2 < |q|_2 \right\},\\
\mathcal{A}_{1,\infty}^{(1)} &:=& \left\{ x \in \mathbb{Q}_2 : \left|x-\mathbf{x}^{(1)}\right|_2 = \left|x-\mathbf{x}^{(\infty)}\right|_2 = |\theta-1|_2 \right\}, \\ 
\mathcal{A}_{1,\infty}^{(2)} &:=& \left\{ x \in \mathbb{Q}_2 : \left|x-\mathbf{x}^{(1)}\right|_2 < \left|x-\mathbf{x}^{(\infty)}\right|_2 = |\theta-1|_2 \right\}, \\
\mathcal{A}_{\infty} &:=& \left\{ x \in \mathbb{Q}_2 : \ 0<\left|x-\mathbf{x}^{(\infty)}\right|_2 < \left|\theta-1\right|_2 \right\}.
\end{eqnarray*}
The following properties of the fixed points $\mathbf{x}^{(0)}, \mathbf{x}^{(1)}$ will be used.
\begin{itemize}
	\item[(i)] $\mathbf{x}^{(0)}=1$ and $\mathbf{x}^{(\infty)}=2-q-\theta=\mathbf{x}^{(0)}-q-(\theta-1);$
	\item[(ii)] $\mathbf{x}^{(1)}=1-q+(\theta-1)(\mathbf{y}^{(1)}-1)=\mathbf{x}^{(\infty)}+(\theta-1)\mathbf{y}^{(1)}$;
	\item[(iii)] $|\mathbf{x}^{(\infty)}|_2=|\mathbf{x}^{(i)}|_2=1,\ \mathbf{x}^{(\infty)},\mathbf{x}^{(i)}\in\mathcal{E}_2$ for $i=0,1\ |\mathbf{y}^{(1)}|_2=1,\ |\mathbf{y}^{(1)}-3|_2<1;$ 
	\item[(iv)] $|\mathbf{x}^{(0)}-\mathbf{x}^{(\infty)}|_2=|q|_2,\ |\mathbf{x}^{(1)}-\mathbf{x}^{(\infty)}|_2=|\theta-1|_2$ and $|\mathbf{x}^{(1)}-\mathbf{x}^{(0)}|_2=|q|_2.$
	\item[(v)] $\mathbf{x}^{(0)} \in \mathcal{A}_0,\ \mathbf{x}^{(1)} \in \mathcal{A}_{1,\infty}^{(2)}$.
\end{itemize}

\begin{proposition}\label{p=2}
The following inclusions hold:
\begin{itemize}
\item[(i)] $\mathcal{A}_0 \cup \mathcal{A}_1 \cup \mathcal{A}_2 \cup \mathcal{A}_{0,\infty}\subset f^{-1}_{\theta,q,3}\left(\mathcal{A}_0\right);$
\item[(ii)] $\mathcal{A}_{\infty}\subset f^{-1}_{\theta,q,3}\left(\mathcal{A}_{1} \right);$
\item[(iii)] $\mathcal{A}_{1,\infty}^{(1)}\subset f^{-1}_{\theta,q,3}\left(\mathcal{A}_{1} \right).$
\end{itemize} 
\end{proposition}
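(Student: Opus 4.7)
The plan is to mimic the structure of the proof of Proposition \ref{p=1mod3}, adapted to the $p=2$ setting. Throughout, I rely on two $2$-adic facts: since $\theta-1$ lies in a high power of $2\mathbb{Z}_2$, one has $|1+\theta+\theta^2|_2=|2\theta+1|_2=|3|_2=1$; and $\mathbb{Z}_2^{\ast}=1+2\mathbb{Z}_2$ has a single residue class modulo $2$, so any two $2$-adic numbers of the same norm $r$ necessarily differ by something of norm strictly less than $r$.

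For part (i), I treat each piece of $\mathcal{A}_0\cup\mathcal{A}_1\cup\mathcal{A}_2\cup\mathcal{A}_{0,\infty}$ exactly as in Proposition \ref{p=1mod3}(i). For $x\in\mathcal{A}_0\cup\mathcal{A}_1$ and for $x\in\mathcal{A}_{0,\infty}$, I evaluate $|x-\mathbf{x}^{(\infty)}|_2$ and $|\theta x+q-1|_2$ using the rewritings $\theta x+q-1=\theta(x-\mathbf{x}^{(0)})+(q+\theta-1)$ and $\theta x+q-1=\theta(x-\mathbf{x}^{(\infty)})-(\theta-1)(\theta-1+q)$, bound $|g(x)|_2$ accordingly, and use \eqref{fminusx0} to conclude $|f_{\theta,q,3}(x)-\mathbf{x}^{(0)}|_2\le|\theta-1|_2<|q|_2$. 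For $x\in\mathcal{A}_2$ I apply \eqref{IsingPottsSingular2}: the quantity $u=\frac{(\theta-1)(1-\theta-q)}{x-\mathbf{x}^{(\infty)}}$ satisfies $|u|_2<1$, so writing $g_1(x)=3+3v+v^2$ with $v=\theta-1+u$ of norm $<1$, the strong triangle inequality together with $|3|_2=1$ gives $|g_1(x)|_2=1$, whence $|f_{\theta,q,3}(x)-\mathbf{x}^{(0)}|_2=\frac{|\theta-1|_2|q|_2}{|x-\mathbf{x}^{(\infty)}|_2}<|q|_2$.

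For part (ii), I split $\mathcal{A}_\infty$ according to the size of $|x-\mathbf{x}^{(\infty)}|_2$ relative to $|q|_2|\theta-1|_2$. When $|x-\mathbf{x}^{(\infty)}|_2<|q|_2|\theta-1|_2$, formula \eqref{IsingPottsSingular} gives $|u|_2>1$, hence $|f_{\theta,q,3}(x)|_2=|\theta+u|_2^3=|u|_2^3>1$ and thus $|f_{\theta,q,3}(x)-\mathbf{x}^{(0)}|_2>1>|q|_2$. When $|q|_2|\theta-1|_2\le|x-\mathbf{x}^{(\infty)}|_2<|\theta-1|_2$, I expand $g(x)=(1+\theta+\theta^2)D^2+(2\theta+1)cD+c^2$ where $D=x-\mathbf{x}^{(\infty)}$ and $c=-(\theta-1)(\theta-1+q)$ has norm $|\theta-1|_2|q|_2$. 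For $|D|_2>|c|_2$ the dominant term forces $|g(x)|_2=|D|_2^2$. The borderline case $|D|_2=|c|_2$ requires a mod-$2$ check: setting $t=c/D\in\mathbb{Z}_2^{\ast}$, one reduces $g(x)/D^2\equiv 1+t+t^2\equiv 1\pmod 2$ since $t$ is odd, so $|g(x)|_2=|D|_2^2$ still. Then \eqref{fminusx0} together with $|x-\mathbf{x}^{(0)}|_2=|q|_2$ yields $|f_{\theta,q,3}(x)-\mathbf{x}^{(0)}|_2=\frac{|\theta-1|_2|q|_2}{|D|_2}>|q|_2$.

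Part (iii) turns out to hold vacuously. If $x\in\mathcal{A}_{1,\infty}^{(1)}$ then both $a:=x-\mathbf{x}^{(\infty)}$ and $b:=\mathbf{x}^{(1)}-\mathbf{x}^{(\infty)}$ have $2$-adic norm $|\theta-1|_2$, so $a/|\theta-1|_2$ and $b/|\theta-1|_2$ lie in $\mathbb{Z}_2^{\ast}$; both being odd, their difference is even, whence $|a-b|_2<|\theta-1|_2$. But $a-b=x-\mathbf{x}^{(1)}$, contradicting the defining requirement $|x-\mathbf{x}^{(1)}|_2=|\theta-1|_2$. Therefore $\mathcal{A}_{1,\infty}^{(1)}=\emptyset$ and the inclusion holds trivially. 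The main delicate step in the whole proof is the borderline case in part (ii), where the naive bound on $|g(x)|_2$ is not automatic and the mod-$2$ reduction is needed to exclude cancellation among the three competing terms of the expansion.
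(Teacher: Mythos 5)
Your proof is correct, and for parts (i) and (ii) it is essentially the paper's own argument: the same case distinction over $\mathcal{A}_0,\mathcal{A}_1,\mathcal{A}_{0,\infty},\mathcal{A}_2$ with the bounds on $|g(x)|_2$ and $|g_1(x)|_2$ fed into \eqref{fminusx0}, \eqref{IsingPottsSingular} and \eqref{IsingPottsSingular2}, and the same splitting of $\mathcal{A}_\infty$ at the critical radius $|q|_2|\theta-1|_2$; your mod-$2$ reduction of $(1+\theta+\theta^2)+(2\theta+1)t+t^2$ to $1+t+t^2\equiv 1$ is precisely the (unstated) justification behind the paper's assertion that $|g(x)|_2=|q|_2^2|\theta-1|_2^2$ on the borderline sphere $\mathcal{A}_\infty^{(2)}$. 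Part (iii) is where you genuinely diverge. You observe that in $\mathbb{Q}_2$ the sphere $\{x:|x-\mathbf{x}^{(\infty)}|_2=|\theta-1|_2\}$ is a single open ball of radius $|\theta-1|_2$ containing $\mathbf{x}^{(1)}$ (the residue field has one nonzero class), so no point of it can lie at distance exactly $|\theta-1|_2$ from $\mathbf{x}^{(1)}$, i.e.\ $\mathcal{A}_{1,\infty}^{(1)}=\emptyset$ and the inclusion is vacuous. The paper instead carries out a computation showing $f_{\theta,q,3}\bigl(\mathcal{A}_{1,\infty}^{(1)}\bigr)\subset\mathcal{A}_{0,\infty}$ --- a set disjoint from $\mathcal{A}_1$ --- so strictly speaking the paper's computation establishes the stated inclusion $\mathcal{A}_{1,\infty}^{(1)}\subset f^{-1}_{\theta,q,3}(\mathcal{A}_1)$ only because the set is empty. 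Your emptiness observation is therefore not merely a shortcut: it is what reconciles the statement of (iii) with any proof of it, and it costs nothing downstream, since $\mathcal{A}_{1,\infty}^{(2)}$ (handled by Proposition \ref{p=2_1}) is then the entire sphere and the decomposition of $\textup{\textbf{Dom}}\{f_{\theta,q,3}\}$ used in the concluding theorem is unaffected.
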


\begin{proof}
(i) Let us first show that $f_{\theta,q,3}(x) \in \mathcal{A}_0$ for any $x \in \mathcal{A}_0 \cup \mathcal{A}_1$.
In fact, we have
$$
|x-\mathbf{x}^{(\infty)}|_2=\left|x-\mathbf{x}^{(0)}+q+(\theta-1)\right|_2=
\begin{cases}
|q|_2, & x \in \mathcal{A}_0, \\
|x-\mathbf{x}^{(0)}|_2, & x \in \mathcal{A}_1, \\
\end{cases}
$$
$$
|\theta x+q-1|_2=\left|\theta(x-\mathbf{x}^{(0)})+q+(\theta-1)\right|_2=
\begin{cases}
|q|_2, & x \in \mathcal{A}_0, \\
|x-\mathbf{x}^{(0)}|_2, & x \in \mathcal{A}_1, \\
\end{cases}
$$
which imply
$$|g(x)|_2 \leq
\begin{cases}
|q|_p^2, & x \in \mathcal{A}_0, \\
|x-\mathbf{x}^{(0)}|_2^2, & x \in \mathcal{A}_1. \\
\end{cases}
$$
Then, by \eqref{fminusx0} 
$$
|f_{\theta,q,3}(x)-\mathbf{x}^{(0)}|_2 \leq
\left\{
\begin{array}{cc}
\frac{|x-\mathbf{x}^{(0)}|_2}{|q|_2}|\theta-1|_2, & x \in \mathcal{A}_0 \\
|\theta-1|_2, & x \in 
\mathcal{A}_1
\end{array}
\right\}
\leq |\theta-1|_2<|q|_2.
$$
Thus, $f_{\theta,q,3}(x) \in \mathcal{A}_0$ for any $x \in \mathcal{A}_0 \cup \mathcal{A}_1$.

We then show $f_{\theta,q,3}(x) \in \mathcal{A}_0$ for any $x \in \mathcal{A}_{0,\infty}$. We get
\begin{eqnarray*}
|x-\mathbf{x}^{(0)}|_2=|x-\mathbf{x}^{(\infty)}|_2 =|q|_2,\\	
|\theta x+q-1|_2 = \left|\theta(x-\mathbf{x}^{(\infty)})+(\theta-1)(1-\theta-q)\right|_2= |q|_2
\end{eqnarray*}
which imply $|g(x)|_2  \leq |q|^2_2$. Then, by \eqref{fminusx0}, $|f_{\theta,q,3}(x)-\mathbf{x}^{(0)}|_2 \leq {|\theta-1|_2}<|q|_2$ for any $x \in \mathcal{A}_{0,\infty}$. Hence, for any $x \in \mathcal{A}_{0,\infty},\ $ $f_{\theta,q,3}(x) \in \mathcal{A}_0.$

Finally, we show $f_{\theta,q,3}(x) \in \mathcal{A}_0$ for any $x \in \mathcal{A}_2.$  Indeed, for any $x \in \mathcal{A}_2,$ we have
$$
\left|\frac{(\theta-1)(1-\theta-q)}{x-\mathbf{x}^{(\infty)}}\right|_2=\frac{|q|_2|\theta-1|_2}{|x-\mathbf{x}^{(\infty)}|_2}<1
$$
which implies $|g_1(x)|_2 = 1$. Then $$\left|f_{\theta, q, 3}(x)-\mathbf{x}^{(0)}\right|_2=\frac{\left|\theta-1\right|_2}{\left|x-\mathbf{x}^{(\infty)}\right|_2}\left|x-\mathbf{x}^{(0)}\right|_2<\left|x-\mathbf{x}^{(0)}\right|_2=|q|_2.$$ Therefore, for any $x \in \mathcal{A}_2,\ $ $f_{\theta,q,3}(x) \in \mathcal{A}_0$ and the assertion (i) holds.

(ii) We want to show that $f_{\theta,q,3}(x) \in \mathcal{A}_1$ for any $x \in \mathcal{A}_{\infty}.$ Note $\mathcal{A}_{\infty} = \mathcal{A}_{\infty}^{(1)} \cup \mathcal{A}_{\infty}^{(2)} \cup \mathcal{A}_{\infty}^{(3)}$ where
\begin{eqnarray*}
\mathcal{A}_{\infty}^{(1)} &:=& \left\{ x \in \mathbb{Q}_2 : \ |q|_2\left|\theta-1\right|_2<\left|x-\mathbf{x}^{(\infty)}\right|_2 < \left|\theta-1\right|_2 \right\}, \\
\mathcal{A}_{\infty}^{(2)} &:=& \left\{ x \in \mathbb{Q}_2 : \ \left|x-\mathbf{x}^{(\infty)}\right|_2 = |q|_2\left|\theta-1\right|_2 \right\}, \\
\mathcal{A}_{\infty}^{(3)} &:=& \left\{ x \in \mathbb{Q}_2 : \ 0<\left|x-\mathbf{x}^{(\infty)}\right|_2 < |q|_2\left|\theta-1\right|_2 \right\}.
\end{eqnarray*}
Let $x \in \mathcal{A}_{\infty}^{(3)}$. By \eqref{IsingPottsSingular} 
$$
\left|\frac{(\theta-1)(1-\theta-q)}{x-\mathbf{x}^{(\infty)}}\right|_2=\frac{|q|_2|\theta-1|_2}{|x-\mathbf{x}^{(\infty)}|_2}>1
$$
which implies $\left|f_{\theta,q,3}(x)\right|_2=\left|\theta+\frac{(\theta-1)(1-\theta-q)}{x-\mathbf{x}^{(\infty)}}\right|_2^3 > 1$. Then $\left|f_{\theta,q,3}(x)-\mathbf{x}^{(0)}\right|_2=\left|f_{\theta,q,3}(x)\right|_2>1>|q|_2.$ So we have $f_{\theta,q,3}\left(\mathcal{A}_{\infty}^{(3)}\right)\subset\mathcal{A}_1$. 

Let $x \in \mathcal{A}_{\infty}^{(1)}$. We have $\left|\frac{(\theta-1)(1-\theta-q)}{x-\mathbf{x}^{(\infty)}}\right|_2=\frac{|\theta-1|_2|q|_2}{|x-\mathbf{x}^{(\infty)}|_2}<1$ which implies $\left|g_1(x)\right|_2= 1.$ Therefore, by \eqref{IsingPottsSingular2} $$|q|_2<\left|f_{\theta,q,3}(x)-\mathbf{x}^{(0)}\right|_2=\frac{\left|\theta-1\right|_2\left|x-\mathbf{x}^{(0)}\right|_2}{\left|x-\mathbf{x}^{(\infty)}\right|_2}<1$$ and $f_{\theta,q,3}\left(\mathcal{A}_{\infty}^{(1)}\right)\subset\mathcal{A}_1.$

Suppose $x \in \mathcal{A}_{\infty}^{(2)}$ and $y=\frac{x-\mathbf{x}^{(\infty)}}{\theta -1}$ such that $|y|_2=|q|_2.$ Write $y=\frac{y^{*}}{|q|_2}.$ Then
\begin{eqnarray*}
g(x)&=&(\theta x+q-1)^2+(\theta x+q-1)(x-\mathbf{x}^{(\infty)})+(x-\mathbf{x}^{(\infty)})^2\\
&=&(\theta-1)^2\left[(\theta^2+\theta+1)y^2+(2\theta+1)(1-\theta-q)y+(1-\theta-q)^2\right]\\
&=&\frac{(\theta-1)^2}{|q|^2_2}\left[(\theta^2+\theta+1)(y^{*})^2+(2\theta+1)(1-\theta-q)^{*}(y^{*})+\left((1-\theta-q)^{*}\right)^2\right].
\end{eqnarray*}
Thus, $|g(x)|_2=|q|^2_2|\theta-1|^2_2$ for any $x \in \mathcal{A}_{\infty}.$ By \eqref{fminusx0}, $|f_{\theta,q,3}(x)-\mathbf{x}^{(0)}|_2=1>|q|_2$ for any $x\in \mathcal{A}_{\infty}^{(2)}$. Hence, $f_{\theta,q,3}\left(\mathcal{A}_{\infty}^{(2)}\right)\subset\mathcal{A}_1.$ Therefore, we have shown $\mathcal{A}_{\infty}\subset f^{-1}_{\theta,q,3}\left(\mathcal{A}_{1} \right).$

(iii) We prove $f_{\theta,q,3}(x) \in \mathcal{A}_{0,\infty}$ for any $x \in \mathcal{A}_{1,\infty}^{(1)}.$ 

We have
$\left|\frac{(\theta-1)(1-\theta-q)}{x-\mathbf{x}^{(\infty)}}\right|_2=\frac{|\theta-1|_2|q|_2}{|x-\mathbf{x}^{(\infty)}|_2}<1$ which implies $\left|g_1(x)-3\right|_2<\left|g_1(x)\right|_2= 1$. Then, by \eqref{IsingPottsSingular2} $$\left|f_{\theta,q,3}(x)-\mathbf{x}^{(0)}\right|_2=\frac{\left|\theta-1\right|_2\left|x-\mathbf{x}^{(0)}\right|_2}{\left|x-\mathbf{x}^{(\infty)}\right|_2}=|q|_2.$$

We now show $|f_{\theta,q,3}(x)-\mathbf{x}^{(\infty)}|_2=|q|_2$ for any  $x \in \mathcal{A}_{1,\infty}^{(1)}.$ Since  $|f_{\theta,q,3}(x)-\mathbf{x}^{(\infty)}|_2=|f_{\theta,q,3}(x)-\mathbf{x}^{(0)}+q+(\theta-1)|_2=|f_{\theta,q,3}(x)-\mathbf{x}^{(0)}+q|_2,$ it suffices to show $|f_{\theta,q,3}(x)-\mathbf{x}^{(0)}+q|_2=|q|_2.$ We have
\begin{multline*}
f_{\theta,q,3}(x)-\mathbf{x}^{(0)}+q=\frac{(\theta-1)(x-\mathbf{x}^{(0)})}{x-\mathbf{x}^{(\infty)}}\left(g_1(x)-3\right)\\+3\left((\theta-1)-\frac{(\theta-1)^2}{x-\mathbf{x}^{(\infty)}}\right)+\frac{q(\theta-1)(\mathbf{y}^{(1)}-3)}{x-\mathbf{x}^{(\infty)}}+q\frac{x-\mathbf{x}^{(1)}}{x-\mathbf{x}^{(\infty)}}.
\end{multline*}
We have $|\mathbf{y}^{(1)}-3|_2<1,\ |x-\mathbf{x}^{(0)}|_2=|q|_2,\ |x-\mathbf{x}^{(\infty)}|_2=|x-\mathbf{x}^{(1)}|_2=|\theta-1|_2, \ |g_1(x)-3|_2<1.$ Then, $|f_{\theta,q,3}(x)-\mathbf{x}^{(0)}+q|_2=|q|_2.$ Then $|f_{\theta,q,3}(x)-\mathbf{x}^{(\infty)}|_2=|q|_2$. Thus, $f_{\theta,q,3}\left(\mathcal{A}_{1,\infty}^{(1)}\right)\subset\mathcal{A}_{0,\infty}.$
\end{proof}

\begin{proposition} \label{p=2_1}
For any $x,y \in \mathcal{A}_{1,\infty}^{(2)}$,
\begin{itemize}
\item[(i)] $\left|f_{\theta,q,3}(x)-f_{\theta,q,3}(y)\right|_2 = \frac{|q|_2}{|\theta-1|_2}|x-y|_2,$
\item[(ii)] there is $n_0 \in \mathbb{N}$ such that $f^{n_0}_{\theta,q,3}(x) \notin \mathcal{A}_{1,\infty}^{(2)}$.
\end{itemize}
\end{proposition}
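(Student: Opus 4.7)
The plan is to mirror the approach used in Proposition \ref{p=3_1}, but exploit the arithmetic of $p=2$, in particular the fact that $|3|_2 = 1$. This makes the proof here somewhat simpler than the $p=3$ case, where $|3|_3 = 1/3$ produced an extra factor.

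For part (i), I would start from formula \eqref{distancexy}, which gives
$$f_{\theta, q, 3}(x) - f_{\theta, q, 3}(y) = \frac{(\theta-1)(1-\theta-q)(y-x)}{(x-\mathbf{x}^{(\infty)})(y-\mathbf{x}^{(\infty)})}\, F_{\theta,q,3}(x,y),$$
and carry out the same kind of term-by-term estimate as in Proposition \ref{ratiooffandx}. For $x,y \in \mathcal{A}_{1,\infty}^{(2)}$, we have $|x-\mathbf{x}^{(\infty)}|_2 = |y-\mathbf{x}^{(\infty)}|_2 = |\theta-1|_2$, so $\left|\frac{\theta-1}{x-\mathbf{x}^{(\infty)}}\right|_2 = \left|\frac{\theta-1}{y-\mathbf{x}^{(\infty)}}\right|_2 = 1$. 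In the expression for $F_{\theta,q,3}(x,y)$, the second summand has absolute value at most $|3\theta(1-\theta-q)|_2 = |q|_2 < 1$, and the third has absolute value at most $|(1-\theta-q)^2|_2 = |q|_2^2 < 1$, while the leading term $3\theta^2$ satisfies $|3\theta^2|_2 = 1$ because $|3|_2=1$ and $\theta \in \mathcal{E}_2$. By the strong triangle inequality, $|F_{\theta,q,3}(x,y)|_2 = 1$. Plugging everything back in yields
$$|f_{\theta,q,3}(x) - f_{\theta,q,3}(y)|_2 = \frac{|\theta-1|_2 \, |q|_2 \, |x-y|_2}{|\theta-1|_2^2} = \frac{|q|_2}{|\theta-1|_2}|x-y|_2.$$

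For part (ii), I would combine (i) with the fact (property (v) in the list preceding Proposition \ref{p=2}) that the repelling fixed point $\mathbf{x}^{(1)}$ lies in $\mathcal{A}_{1,\infty}^{(2)}$. Pick $x \in \mathcal{A}_{1,\infty}^{(2)}$ with $x \neq \mathbf{x}^{(1)}$; set $\lambda := |q|_2/|\theta-1|_2 > 1$. As long as $f^{n}_{\theta,q,3}(x)$ remains in $\mathcal{A}_{1,\infty}^{(2)}$, applying (i) iteratively to the pair $(f^{n-1}_{\theta,q,3}(x), \mathbf{x}^{(1)})$ gives $|f^{n}_{\theta,q,3}(x) - \mathbf{x}^{(1)}|_2 = \lambda^n |x-\mathbf{x}^{(1)}|_2$. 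Since $\lambda > 1$ and $|x-\mathbf{x}^{(1)}|_2 > 0$, there is a smallest $n_0$ for which $\lambda^{n_0}|x - \mathbf{x}^{(1)}|_2 \geq |\theta-1|_2$, and at that point $f^{n_0}_{\theta,q,3}(x) \notin \mathcal{A}_{1,\infty}^{(2)}$.

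No real obstacle is expected: the only subtlety is making sure $|F_{\theta,q,3}(x,y)|_2 = 1$, which for $p=2$ reduces to the observation that $3$ is a $2$-adic unit. Everything else is the same orbit-expansion argument that worked in the $p=3$ analogue.
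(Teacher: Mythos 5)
Your proof is correct and follows essentially the same route as the paper: the same term-by-term estimate of $F_{\theta,q,3}$ from \eqref{distancexy} (with the key observation that $|3\theta^2|_2=1$ while the other two summands have norm at most $|q|_2$), and the same orbit-expansion argument away from the fixed point $\mathbf{x}^{(1)}\in\mathcal{A}_{1,\infty}^{(2)}$ for part (ii) — indeed your write-up of (ii) is more explicit than the paper's one-line justification.
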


\begin{proof}
(i) Suppose $\bar{x},\bar{\bar{x}}\in\mathcal{A}_{1,\infty}^{(2)}.$ Then
\begin{eqnarray*}
\left|3\theta(1-\theta-q)\left(\frac{\theta-1}{\bar{x}-\mathbf{x}^{\infty}}+\frac{\theta-1}{\bar{\bar{x}}-\mathbf{x}^{\infty}}\right)\right|_2 \leq |q|_2,\\
\left|(1-\theta-q)^2\left(\frac{(\theta-1)^2}{\left(\bar{x}-\mathbf{x}^{\infty}\right)^2}+\frac{(\theta-1)^2}{\left(\bar{x}-\mathbf{x}^{\infty}\right)\left(\bar{\bar{x}}-\mathbf{x}^{\infty}\right)}+\frac{(\theta-1)^2}{\left(\bar{\bar{x}}-\mathbf{x}^{\infty}\right)^2}\right) \right|_2\leq(|q|_2)^2
\end{eqnarray*}
which imply $|F_{\theta, q, 3}(\bar{x},\bar{\bar{x}})|_2=1$. Thus, by \eqref{distancexy}, $\left|f_{\theta, q, 3}(\bar{x}) -f_{\theta, q, 3}(\bar{\bar{x}})\right|_2=\frac{|q|_2}{|\theta-1|_2}|\bar{x}-\bar{\bar{x}}|_2$.

By (i) and the fact $\mathbf{x}^{(1)} \in \mathcal{A}_{1,\infty}^{(2)}$, we get the assertion (ii).
\end{proof}

We conclude the results in this subsection by the following theorem.

\begin{theorem}
We have
$$\mathfrak{B}(\mathbf{x}^{(0)})= \mathbb{Q}_2 \setminus \left(\mathbf{x}^{(1)} \cup \bigcup_{n=0}^{+\infty}f^{-n}_{\theta, q, 3}\left\{\mathbf{x}^{(\infty)}\right\}\right).$$
\end{theorem}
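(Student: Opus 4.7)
The plan is to combine Propositions~\ref{p=2} and~\ref{p=2_1} to trace every orbit, in direct analogy with the $p=3$ case. First, I would note that the proof of Proposition~\ref{p=2}~(i) actually yields the quantitative estimate $|f_{\theta,q,3}(x)-\mathbf{x}^{(0)}|_2 \leq \frac{|\theta-1|_2}{|q|_2}|x-\mathbf{x}^{(0)}|_2$ for $x\in\mathcal{A}_0$, with ratio strictly less than $1$. Hence $f_{\theta,q,3}:\mathcal{A}_0\to\mathcal{A}_0$ is a strict $|\cdot|_2$-contraction toward $\mathbf{x}^{(0)}$, so $\mathcal{A}_0\subset \mathfrak{B}(\mathbf{x}^{(0)})$, and by iteration $\bigcup_{n\geq 0} f_{\theta,q,3}^{-n}(\mathcal{A}_0)\subset \mathfrak{B}(\mathbf{x}^{(0)})$. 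Conversely, any orbit converging to $\mathbf{x}^{(0)}$ eventually enters the open neighborhood $\mathcal{A}_0$, which gives the equality
$$\mathfrak{B}(\mathbf{x}^{(0)}) = \bigcup_{n\geq 0} f_{\theta,q,3}^{-n}(\mathcal{A}_0),$$
exactly as in Theorem~\ref{basin}.

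Next, I would use Proposition~\ref{p=2} to cover most of $\textup{\textbf{Dom}}\{f_{\theta,q,3}\}$ by the basin. Part~(i) already gives $\mathcal{A}_0\cup\mathcal{A}_1\cup\mathcal{A}_2\cup\mathcal{A}_{0,\infty}\subset f^{-1}_{\theta,q,3}(\mathcal{A}_0)\subset \mathfrak{B}(\mathbf{x}^{(0)})$; part~(ii) gives $\mathcal{A}_{\infty}\subset f^{-1}_{\theta,q,3}(\mathcal{A}_1)\subset f^{-2}_{\theta,q,3}(\mathcal{A}_0)$; and inspection of the proof of~(iii) actually shows $f_{\theta,q,3}(\mathcal{A}_{1,\infty}^{(1)})\subset\mathcal{A}_{0,\infty}$, so $\mathcal{A}_{1,\infty}^{(1)}\subset f^{-2}_{\theta,q,3}(\mathcal{A}_0)\subset\mathfrak{B}(\mathbf{x}^{(0)})$. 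The only remaining cell in the partition of $\textup{\textbf{Dom}}\{f_{\theta,q,3}\}$ is $\mathcal{A}_{1,\infty}^{(2)}$, which contains the repelling fixed point $\mathbf{x}^{(1)}$.

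The final step treats $\mathcal{A}_{1,\infty}^{(2)}$. Clearly $\mathbf{x}^{(1)}\notin\mathfrak{B}(\mathbf{x}^{(0)})$. For any $x\in\mathcal{A}_{1,\infty}^{(2)}\setminus\{\mathbf{x}^{(1)}\}$, Proposition~\ref{p=2_1}~(ii) provides $n_0$ with $f_{\theta,q,3}^{n_0}(x)\notin\mathcal{A}_{1,\infty}^{(2)}$; either $f_{\theta,q,3}^{n_0}(x)=\mathbf{x}^{(\infty)}$, placing $x\in f_{\theta,q,3}^{-n_0}\{\mathbf{x}^{(\infty)}\}$, or $f_{\theta,q,3}^{n_0}(x)$ lies in one of the cells handled in the previous step and therefore belongs to $\mathfrak{B}(\mathbf{x}^{(0)})$. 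Combining these observations gives the disjoint decomposition
$$\mathbb{Q}_2 \;=\; \mathfrak{B}(\mathbf{x}^{(0)}) \;\cup\; \{\mathbf{x}^{(1)}\} \;\cup\; \bigcup_{n\geq 0} f_{\theta,q,3}^{-n}\{\mathbf{x}^{(\infty)}\},$$
which is equivalent to the asserted equality (the three parts are pairwise disjoint because $\mathbf{x}^{(1)}$ is a fixed point distinct from $\mathbf{x}^{(\infty)}$).

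The only mildly delicate point is closing the loop on $\mathcal{A}_{1,\infty}^{(2)}$: one has to verify that after the orbit first exits $\mathcal{A}_{1,\infty}^{(2)}$ at time $n_0$, later iterates cannot re-enter it and thereby spoil the inductive argument. This is automatic, because the landing cell at time $n_0$ is either $\{\mathbf{x}^{(\infty)}\}$ or one of the cells handled in the second step, all of which are mapped into $\mathcal{A}_0$ within at most two further iterations, and $\mathcal{A}_0$ is forward-invariant under the contraction established in the first step.
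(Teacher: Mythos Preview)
Your argument is correct and follows exactly the route the paper intends: the paper's own proof is the single line ``It follows from Propositions~\ref{p=2} and~\ref{p=2_1}'', and you have simply written out the orbit-tracing that this line summarizes (contraction on $\mathcal{A}_0$, covering all cells except $\mathcal{A}_{1,\infty}^{(2)}$ via Proposition~\ref{p=2}, then expelling every point $\neq\mathbf{x}^{(1)}$ from $\mathcal{A}_{1,\infty}^{(2)}$ via Proposition~\ref{p=2_1}). Your observation that the proof of Proposition~\ref{p=2}~(iii) actually establishes $f_{\theta,q,3}(\mathcal{A}_{1,\infty}^{(1)})\subset\mathcal{A}_{0,\infty}$ rather than $\subset\mathcal{A}_1$ as stated is also correct---that is a typo in the paper's statement, and either target works for the conclusion.
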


\begin{proof}
It follows from Propositions \ref{p=2} and \ref{p=2_1}.
\end{proof}
	
\bibliographystyle{amsplain}

\end{document}